\documentclass[A4]{article}
\usepackage{amscd}
\usepackage{amssymb}
\usepackage{amsmath}
\usepackage{amsthm}
\usepackage{latexsym}
\usepackage{upref}
\usepackage{epsfig}
\usepackage{mathrsfs}
\usepackage{float}
\usepackage{indentfirst}
\usepackage{hyperref}
\hypersetup{
    colorlinks=true,
    linkcolor=blue,
    citecolor=blue,
    filecolor=blue,
    urlcolor=blue,
}
\usepackage{graphics,graphicx,color}
\usepackage{floatflt}
\usepackage{cite}
\usepackage{enumitem}
\usepackage{fancyhdr} 

\pagestyle{fancy} \fancyhf{} \fancyhead[L]{\sf T. Suksumran and A.
A. Ungar} \fancyhead[R]{\sf Bi-gyrogroup: The group-like structure}
\fancyfoot[C]{\sf \thepage}

%
 \newtheorem{thm}{Theorem}[section]
 \newtheorem{cor}[thm]{Corollary}
 \newtheorem{lem}[thm]{Lemma}
 \newtheorem{prop}[thm]{Proposition}
 \theoremstyle{definition}
 \newtheorem{defn}[thm]{Definition}
 \theoremstyle{remark}
 \newtheorem*{rem}{Remark}
 
 \numberwithin{equation}{section}

\makeatletter
\def\th@plain{%
  \thm@notefont{}
  \itshape 
}
\def\th@definition{%
  \thm@notefont{}
  \normalfont 
} \makeatother

\setlist{font=\normalfont}

\newcommand{\Gam}{\Gamma}
\newcommand{\alp}{\alpha}
\newcommand{\lamb}{\lambda}
\newcommand{\bet}{\beta}
\newcommand{\lgyr}[2]{{\mathrm{lgyr}[{#1}]}{#2}}
\newcommand{\rgyr}[2]{{\mathrm{rgyr}[{#1}]}{#2}}
\newcommand{\gyr}[2]{{\mathrm{gyr}[{#1}]}{#2}}
\newcommand{\ilgyr}[2]{{\mathrm{lgyr^{-1}}[{#1}]}{#2}}
\newcommand{\irgyr}[2]{{\mathrm{rgyr^{-1}}[{#1}]}{#2}}

\newcommand{\aut}[1]{\mathrm{Aut}\,{(#1)}}
\newcommand{\id}[1]{\mathrm{id}_{#1}}
\newcommand{\set}[1]{\{#1\}}
\newcommand{\cset}[2]{\set{{#1}\colon{#2}}}
\newcommand{\BCset}[2]{\left\{#1\colon #2\right\}}
\newcommand{\Bset}[1]{\left\{#1\right\}}
\newcommand{\Bp}[1]{\left(#1\right)}
\newcommand{\R}{\mathbb{R}}
\newcommand{\N}{\mathbb{N}}
\newcommand{\so}[1]{\mathrm{SO}(#1)}
\newcommand{\spin}[1]{\mathrm{Spin}\,(#1)}
\newcommand{\CL}[1]{\mathrm{C}\ell\hskip1pt(#1)}
\newcommand{\Cg}[1]{\Gamma{\mathbin{(#1)}}}
\newcommand{\mulCL}[1]{\mathrm{C}\ell^\times{(#1)}}
\newcommand{\GL}[1]{\mathrm{GL}\,(#1)}
\newcommand{\Or}[1]{\mathrm{O}(#1)}

\renewcommand\footnotemark{}

\setlength{\textwidth=124mm} \setlength{\textheight=186mm}
\begin{document}
\title{\textbf{Bi-gyrogroup: The Group-like Structure Induced by Bi-decomposition of Groups$^\star$}\footnote{$^\star$This is the final version of the manuscript published
in Mathematics Interdisciplinary Research {\bf 1} (2016), pp.
111--142. The published version of the article is accessible via
\href{http://mir.kashanu.ac.ir/article_13911_2153.html}{{http://mir.kashanu.ac.ir/}}.}}
\author{Teerapong Suksumran$^\mathrm{a, b,\ast, \dag}$\footnote{$^\ast$Corresponding author (E-mail address: teerapong.suksumran@gmail.com).}\footnote{$^\dag$The first author was financially supported by Institute for
Promotion of Teaching Science and Technology (IPST), Thailand, via
Development and Promotion of Science and Technology Talents Project
(DPST).}\,\, and Abraham A. Ungar$^\mathrm{a, \ddag}$\footnote{$^\ddag$E-mail address: abraham.ungar@ndsu.edu}\\
$^\mathrm{a}$Department of Mathematics\\
North Dakota State University\\
Fargo, ND 58105, USA\\[0.3cm]
$^\mathrm{b}$Department of Mathematics and Computer Science\\
Faculty of Science, Chulalongkorn University\\Bangkok 10330,
Thailand\\}
\date{}
\maketitle

\begin{abstract}
The decomposition $\Gamma=BH$ of a group $\Gamma$ into a subset $B$
and a subgroup $H$ of $\Gamma$ induces, under general conditions, a
group-like structure for $B$, known as a gyrogroup. The famous
concrete realization of a \mbox{gyrogroup}, which motivated the
emergence of gyrogroups into the mainstream, is the space of all
relativistically admissible velocities along with a binary
\mbox{operation} given by the Einstein velocity addition law of
special relativity theory. The latter leads to the Lorentz
transformation group $\so{1,n}$, $n\in\N$, in pseudo-Euclidean
spaces of signature $(1, n)$. The study in this article is motivated
by generalized Lorentz groups $\so{m, n}$, $m, n\in\N$, in
pseudo-Euclidean spaces of signature $(m, n)$. Accordingly, this
article explores the bi-decomposition $\Gamma = H_LBH_R$ of a group
$\Gamma$ into a subset $B$ and subgroups $H_L$ and $H_R$ of
$\Gamma$, along with the novel bi-gyrogroup structure of $B$ induced
by the bi-decomposition of $\Gamma$. As an example, we show by
methods of Clifford \mbox{algebras} that the quotient group of the
spin group $\spin{m, n}$ possesses the bi-decomposition structure.
\end{abstract}
{\bf Keywords:} bi-decomposition of group; bi-gyrogroup; gyrogroup; spin group; pseudo-orthogonal group.\\
{\bf 2010 MSC:} Primary 20N02; Secondary 22E43, 15A66, 20N05, 15A30.

\thispagestyle{empty}
\section{Introduction}\label{sec: introdcution}
\par Lorentz transformation groups $\Gam = \so{1,n}$, $n\in\N$,
possess the decomposition structure $\Gam = BH$, where $B$ is a
subset of $\Gam$ and $H$ is a subgroup of $\Gam$ \cite{AU1988TRP}.
The decomposition structure of $\Gam$ induces a group-like structure
for $B$. This group-like structure was discovered in 1988
\cite{AU1988TRP} and became known as a \textit{gyrogroup}
\cite{AU1991TPI, AU2001BEA}. Subsequently, gyrogroups turned out to
play a universal computational role that extends far beyond the
domain of Lorentz groups $\so{1,n}$ \cite{AU2010BCE, AU2015AHG}, as
noted by Chatelin in \cite[p.~523]{FC2012QCA} and in references
therein. In fact, gyrogroups are special loops that, according to
\cite{TJASRA2014SBM}, are placed centrally in loop theory.

\par The use of Clifford algebras to employ gyrogroups as a computational
tool in harmonic analysis is presented by Ferreira in the seminal
papers \cite{MF2014HAE, MF2015HAM}. The use of Clifford algebras to
obtain a better understanding of gyrogroups is found, for instance,
in \cite{MF2009FMG, MF2011GPH, MFGR2011MGC, JL2010CAM, TSKW2015EGB}.

\par Generalized Lorentz transformation groups $\Gam = \so{m,n}$,
$m, n\in\N$, possess the so-called \textit{bi-decomposition}
structure $\Gam = H_LBH_R$, where $B$ is a subset of $\Gam$ and
$H_L$ and $H_R$ are subgroups of $\Gam$. The bi-decomposition
structure of $\Gam$ induces a group-like structure for $B$, called a
\textit{bi-gyrogroup} \cite{AU2015PRL}. The use of Clifford algebras
that may improve our understanding of bi-gyrogroups is found in
\cite{FMSF2013CBH}. Clearly, the notion of bi-gyrogroups extends the
notion of gyrogroups. Accordingly, ``gyro-language'', the algebraic
language crafted for gyrogroup theory is extended to
``bi-gyro-language'' for bi-gyrogroup theory.

\par As a first step towards demonstrating that bi-gyrogroups play a
universal computational role that extends far beyond the domain of
generalized Lorentz groups $\so{m, n}$, the aim of the present
article is to approach the study of bi-gyrogroups from the abstract
viewpoint.

\par The article is organized as follows. In Section \ref{sec: bigyrogroupoid}
we give the definition of a bi-gyrogroupoid. In Section \ref{sec:
bitransversal} we show that the bi-transversal decomposition of a
group with additional \mbox{properties} yields a highly structured
type of bi-gyrogroupoids. In Section \ref{sec: bigyrodecomposition}
we introduce the notion of bi-gyrodecomposition of groups and prove
that any bi-gyrodecomposition of a group gives rise to a
bi-gyrogroup. Finally, in Sections \ref{sec: pseudo-orthogonal
group} and \ref{sec: spin group} we demonstrate that the
pseudo-orthogonal group $\so{m, n}$ and the quotient group of the
spin group $\spin{m, n}$ possess the bi-gyrodecomposition structure.

\section{Bi-gyrogroupoids}\label{sec: bigyrogroupoid}

\par We begin with the abstract definition of a bi-gyrogroupoid,
which is modeled on the groupoid $\R^{n\times m}$ of all $n\times m$
real matrices with bi-gyroaddition studied in detail in
\cite{AU2015PRL}. We recall that a groupoid $(B, \oplus_b)$ is a
non-empty set $B$ with a binary operation $\oplus_b$. An
automorphism of a groupoid $(B, \oplus_b)$ is a bijection from $B$
to itself that preserves the groupoid operation. The group of all
automorphisms of $(B, \oplus_b)$ is denoted by $\aut{B, \oplus_b}$
or simply $\aut{B}$.

\begin{defn}[Bi-gyrogroupoid]\label{def: bigyrogroupoid}
A groupoid $(B, \oplus_b)$ is a \textit{bi-gyrogroupoid} if its
binary operation satisfies the following axioms.\\
(BG1) There is an element $0\in B$ such that $0\oplus_b a =
a\oplus_b 0 = a$ for all $a\in B$.\\
(BG2) For each $a\in B$, there is an element $b\in B$ such
that $b\oplus_b a = 0$.\\
(BG3) Each pair of $a$ and $b$ in $B$ corresponds to a left
automorphism $\lgyr{a, b}{}$
    and a right automorphism $\rgyr{a, b}{}$ in $\aut{B,\oplus_b}$ such
    that for all $c\in B$,
    \begin{equation}\label{eqn: bi-gyroassociative law}
    (a\oplus_b b)\oplus_b\lgyr{a, b}{c} = \rgyr{b, c}{a}\oplus_b(b\oplus_b
    c).
    \end{equation}
(BG4) For all $a, b\in B$,
    \begin{enumerate}
    \item[(a)] $\rgyr{a, b}{} = \rgyr{\lgyr{a, b}{a}, a\oplus_b b}{}$, and
    \item[(b)] $\lgyr{a, b}{} = \lgyr{\lgyr{a, b}{a}, a\oplus_b b}{}$.
    \end{enumerate}
(BG5) For all $a\in B$, $\lgyr{a, 0}{}$ and $\rgyr{a, 0}{}$ are the
identity map of $B$.
\end{defn}

\par A concrete realization of Axioms (BG1) through (BG5) will be presented
in Section \ref{sec: pseudo-orthogonal group}.

\par Roughly speaking, any bi-gyrogroupoid is a groupoid that comes
with two families of automorphisms, called left and right
automorphisms or, collectively, bi-automorphisms. Note that if
bi-automorphisms of a bi-gyrogroupoid $(B, \oplus_b)$ reduce to the
identity automorphism of $B$, then $(B, \oplus_b)$ forms a group.

\par Let $\ilgyr{a, b}{}$ and $\irgyr{a, b}{}$ be the inverse map
of $\lgyr{a, b}{}$ and $\rgyr{a, b}{}$, respectively. Let $\circ$
denote \textit{function composition} and let $\id{X}$ denote the
identity map on a non-empty set $X$. The following theorem asserts
that bi-gyrogroupoids satisfy a generalized associative law.

\begin{thm}\label{thm: left and right bi-gyroassociative law}
Any bi-gyrogroupoid $B$ satisfies the left bi-gyroassociative law
\begin{equation}\label{eqn: left bi-gyroassociative law}
a\oplus_b (b\oplus_b c) = (\irgyr{b, c}{a}\oplus_b
b)\oplus_b\lgyr{\irgyr{b, c}{a}, b}{c}
\end{equation}
and the right bi-gyroassociative law
\begin{equation}\label{eqn: right bi-gyroassociative law}
(a\oplus_b b)\oplus_b c = \rgyr{b, \ilgyr{a, b}{c}}{a}\oplus_b
(b\oplus_b \ilgyr{a, b}{c})
\end{equation}
for all $a, b, c\in B$.
\end{thm}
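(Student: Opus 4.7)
The plan is to derive both identities from the single bi-gyroassociative law (BG3) by a change of variables, exploiting the fact that the left and right gyrations are \emph{automorphisms} of $B$ and hence bijective, so their inverses $\ilgyr{a,b}{}$ and $\irgyr{a,b}{}$ are well-defined maps from $B$ to $B$.

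For the left bi-gyroassociative law, I will fix $b, c \in B$ and treat (BG3) as an identity parametrized by $a$. The right-hand side $\rgyr{b,c}{a}\oplus_b(b\oplus_b c)$ suggests the substitution $a \mapsto \irgyr{b,c}{a}$, which turns $\rgyr{b,c}{a}$ into $a$ itself since $\rgyr{b,c}{} \circ \irgyr{b,c}{} = \id{B}$. Applying this substitution to (BG3) gives $(\irgyr{b,c}{a} \oplus_b b) \oplus_b \lgyr{\irgyr{b,c}{a}, b}{c} = a \oplus_b (b \oplus_b c)$, which is exactly \eqref{eqn: left bi-gyroassociative law}.

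For the right bi-gyroassociative law, the plan is symmetric: fix $a, b \in B$ and regard (BG3) as an identity in $c$. The left-hand side $(a\oplus_b b)\oplus_b \lgyr{a,b}{c}$ suggests the substitution $c \mapsto \ilgyr{a,b}{c}$, which turns $\lgyr{a,b}{c}$ into $c$. Substituting into (BG3) yields $(a\oplus_b b) \oplus_b c = \rgyr{b, \ilgyr{a,b}{c}}{a} \oplus_b (b \oplus_b \ilgyr{a,b}{c})$, which is \eqref{eqn: right bi-gyroassociative law}.

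There is essentially no obstacle here: both identities are formal consequences of (BG3) together with the bijectivity of $\lgyr{a,b}{}$ and $\rgyr{b,c}{}$ guaranteed by their membership in $\aut{B,\oplus_b}$. The only care needed is to make sure the parameters appearing inside the gyration brackets are updated consistently with the substitution (e.g.\ $\lgyr{a,b}{c}$ becomes $\lgyr{\irgyr{b,c}{a}, b}{c}$ in the left version, where the first slot of the left gyration now contains the substituted value). Axioms (BG4) and (BG5) are not needed for this theorem; only (BG3) and the automorphism property are used.
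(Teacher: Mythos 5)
Your proposal is correct and matches the paper's argument: the paper likewise sets $d=\irgyr{b,c}{a}$ (i.e.\ uses surjectivity of $\rgyr{b,c}{}$ to write $a=\rgyr{b,c}{d}$) and substitutes into (BG3), handling the right version symmetrically. No issues.
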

\begin{proof}
Let $a, b, c\in B$ be arbitrary. Since $\rgyr{b, c}{}$ is
surjective, there is an element $d\in B$ for which $\rgyr{b, c}{d} =
a$. By (BG3),
$$a\oplus_b (b\oplus_b c) = \rgyr{b, c}{d}\oplus_b(b\oplus_b c) = (d\oplus_b b)\oplus_b\lgyr{d, b}{c}.$$
Since $d = \irgyr{b, c}{a}$, \eqref{eqn: left bi-gyroassociative
law} is obtained. One obtains \eqref{eqn: right bi-gyroassociative
law} in a similar way.
\end{proof}

\begin{lem}\label{lem: Uniqueness of 0 in a bigyrogroup}
Any bi-gyrogroupoid $B$ has a unique two-sided identity element.
\end{lem}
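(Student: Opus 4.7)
The existence of a two-sided identity element is supplied directly by axiom (BG1), so only uniqueness requires argument. My plan is to use the standard monoid-style uniqueness argument: assume that $e_1, e_2 \in B$ are both two-sided identity elements of $(B, \oplus_b)$, and evaluate the single expression $e_1 \oplus_b e_2$ in two different ways using the defining property of each.

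On the one hand, since $e_2$ is (in particular) a right identity, one has $e_1 \oplus_b e_2 = e_1$. On the other hand, since $e_1$ is (in particular) a left identity, one has $e_1 \oplus_b e_2 = e_2$. Equating the two expressions yields $e_1 = e_2$, which is the desired uniqueness.

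There is no real obstacle. None of the deeper axioms are invoked: the bi-gyroassociative law (BG3), the bi-automorphism axioms (BG4), (BG5), and the left-inverse axiom (BG2) all play no role. The only thing worth flagging is that the argument genuinely uses both halves of (BG1) — the left identity property of one candidate and the right identity property of the other — since in a general groupoid a one-sided identity need not be unique. Everything else is a direct one-line computation, and the lemma serves mainly to license the unambiguous notation $0 \in B$ for the identity that will be used throughout the rest of the paper.
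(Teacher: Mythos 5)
Your proof is correct and is essentially identical to the paper's: both evaluate the product of two putative two-sided identities in two ways, using the left-identity property of one and the right-identity property of the other, to conclude they coincide. Nothing further is needed.
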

\begin{proof}
By Definition \ref{def: bigyrogroupoid}, $B$ has a two-sided
identity element.  Suppose that $e$ and $f$ are two-sided identity
elements of $B$. As $e$ is a left identity, $e\oplus_b f = f$. As
$f$ is a right identity, $e\oplus_b f = e$. Hence, $e = e\oplus_b f
= f$.
\end{proof}

\par Following Lemma \ref{lem: Uniqueness of 0 in a bigyrogroup}, the
unique two-sided identity of a bi-gyrogroupoid will be denoted by
$0$. Let $B$ be a bi-gyrogroupoid and let $a\in B$. We say that
$b\in B$ is a \textit{left inverse} of $a$ if $b\oplus_b a = 0$ and
that $c\in B$ is a \textit{right inverse} of $a$ if $a\oplus_b c =
0$. To see that each element of a bi-gyrogroupoid has a unique
two-sided inverse, we investigate some basic properties of a
bi-gyrogroupoid.

\begin{thm}\label{thm: basic properties of bigyrogroup I}
Let $B$ be a bi-gyrogroupoid. The following properties are true.
\begin{enumerate}
    \item\label{item: bigyration of 0} For all $a, b\in B$, $\lgyr{a, b}{0} = 0$ and $\rgyr{a, b}{0} = 0$.
    \item\label{item: bigyration generated by a,a} For all $a\in B$, $\lgyr{a, a}{} = \id{B}$ and $\rgyr{a, a}{} =
    \id{B}$.
    \item\label{item: bigyration generated by b,a and b a left inverse of a} If $a$ is a left inverse of $b$, then $\lgyr{a, b}{} =
    \id{B}$ and $\rgyr{a, b}{} = \id{B}$.
    \item\label{item: left cancelation law} For all $b, c\in B$, if $a$ is a left inverse of
    $b$, then $\rgyr{b, c}{a}\oplus_b(b\oplus_b c) = c$.
    \item\label{item: left inverse is a right inverse} For all $a\in B$, if $b$ is a left inverse of $a$, then
    $b$ is a right inverse of $a$.
\end{enumerate}
\end{thm}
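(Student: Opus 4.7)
The plan is to prove the five properties in a cascade, using each one to unlock the next, with the key technical pillars being the bi-gyroassociative law (BG3), the identification axioms (BG4), and the ``zero-absorption'' axiom (BG5) together with the uniqueness of the identity from the previous lemma.

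For \eqref{item: bigyration of 0}, I would not use any of the bi-gyro axioms directly; instead I would observe that \emph{any} automorphism of a groupoid with a unique two-sided identity must fix that identity. Indeed, if $\phi \in \aut{B,\oplus_b}$, then $\phi(0) \oplus_b \phi(a) = \phi(a)$ for every $a$, and since $\phi$ is surjective this shows $\phi(0)$ is a left identity; a symmetric argument shows it is a right identity, so by Lemma \ref{lem: Uniqueness of 0 in a bigyrogroup} we get $\phi(0) = 0$. Applied to $\lgyr{a,b}{}$ and $\rgyr{a,b}{}$ this yields \eqref{item: bigyration of 0}. For \eqref{item: bigyration generated by a,a} I would specialize (BG4) at $b = 0$: using $a \oplus_b 0 = a$ and $\lgyr{a,0}{a} = a$ (from (BG5)), axiom (BG4)(a) collapses to $\rgyr{a,0}{} = \rgyr{a,a}{}$, and one more application of (BG5) gives $\rgyr{a,a}{} = \id{B}$. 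The same maneuver with (BG4)(b) yields $\lgyr{a,a}{} = \id{B}$.

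Property \eqref{item: bigyration generated by b,a and b a left inverse of a} is the same trick, but with the role of $0$ played by $a \oplus_b b$: if $a$ is a left inverse of $b$, then $a \oplus_b b = 0$, so (BG4)(a) reads $\rgyr{a,b}{} = \rgyr{\lgyr{a,b}{a}, 0}{}$, and (BG5) reduces the right-hand side to $\id{B}$; similarly for $\lgyr{a,b}{}$. Property \eqref{item: left cancelation law} is then immediate from (BG3): substituting $a \oplus_b b = 0$ and $\lgyr{a,b}{c} = c$ (from property \eqref{item: bigyration generated by b,a and b a left inverse of a}) into \eqref{eqn: bi-gyroassociative law} produces $0 \oplus_b c = \rgyr{b,c}{a} \oplus_b (b \oplus_b c)$, and the left side simplifies to $c$ by (BG1).

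The main obstacle, and the one that requires a genuine idea, is \eqref{item: left inverse is a right inverse}: upgrading a left inverse to a two-sided inverse. Here I would exploit (BG2) twice. Assume $b \oplus_b a = 0$, so $b$ is a left inverse of $a$. By (BG2) applied to $b$, pick some $b' \in B$ with $b' \oplus_b b = 0$. Now invoke the cancellation identity \eqref{item: left cancelation law} (with $b'$ playing the role of the left inverse of $b$, and choosing the free element to be $a$), to get
\begin{equation*}
\rgyr{b,a}{b'} \oplus_b (b \oplus_b a) = a.
\end{equation*}
Because $b \oplus_b a = 0$, the left-hand side is just $\rgyr{b,a}{b'}$; and because $b$ is a left inverse of $a$, property \eqref{item: bigyration generated by b,a and b a left inverse of a} tells us $\rgyr{b,a}{} = \id{B}$, so $b' = a$. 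Substituting back into $b' \oplus_b b = 0$ gives $a \oplus_b b = 0$, which is exactly what it means for $b$ to be a right inverse of $a$. The ordering of the five proofs is essential: \eqref{item: bigyration of 0} enables \eqref{item: bigyration generated by a,a} and \eqref{item: bigyration generated by b,a and b a left inverse of a} via (BG5), which in turn fuels \eqref{item: left cancelation law}, whose symmetry-breaking consequence produces \eqref{item: left inverse is a right inverse}.
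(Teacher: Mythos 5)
Your proposal is correct and follows essentially the same route as the paper's proof: item by item, the same axioms are invoked in the same order, and your argument for \eqref{item: left inverse is a right inverse} (introducing a left inverse $b'$ of $b$, applying the cancellation identity \eqref{item: left cancelation law} with free element $a$, and using $\rgyr{b,a}{}=\id{B}$ to conclude $b'=a$) is exactly the paper's. The only cosmetic difference is in \eqref{item: bigyration of 0}, where you package the computation as the general fact that any automorphism of a groupoid with a unique two-sided identity fixes that identity; the paper runs the identical surjectivity argument directly on $\lgyr{a,b}{}$.
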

\begin{proof}
\eqref{item: bigyration of 0} Let $a, b\in B$. Let $c\in B$ be
arbitrary. Since $\lgyr{a, b}{}$ is surjective, $c = \lgyr{a, b}{d}$
for some $d\in B$. Then $$c\oplus_b\lgyr{a, b}{0} = \lgyr{a,
b}{d}\oplus_b\lgyr{a, b}{0} = \lgyr{a, b}{(d\oplus_b 0)} = \lgyr{a,
b}{d} = c.$$ Similarly, $(\lgyr{a, b}{0})\oplus_b c = c$. Hence,
$\lgyr{a, b}{0}$ is a two-sided identity of $B$. By Lemma \ref{lem:
Uniqueness of 0 in a bigyrogroup}, $\lgyr{a, b}{0} = 0$. Similarly,
one can prove that $\rgyr{a, b}{0} = 0$.

\par \eqref{item: bigyration generated by a,a} Setting $b = 0$ in
(BG4a) gives $\rgyr{a, a}{} = \rgyr{a, 0}{} = \id{B}$ by (BG5).
Similarly, setting $b = 0$ in (BG4b) gives $\lgyr{a, a}{} = \id{B}$.

\par \eqref{item: bigyration generated by b,a and b a left inverse of
a} Let $b\in B$ and let $a$ be a left inverse of $b$. By (BG4a) and
(BG5),
$$\rgyr{a, b}{} = \rgyr{\lgyr{a, b}{a}, a\oplus_b b}{} = \rgyr{\lgyr{a, b}{a}, 0}{} = \id{B}.$$
Similarly, $\lgyr{a, b}{} = \id{B}$ by (BG4b) and (BG5).

\par \eqref{item: left cancelation law} Let $b, c\in B$ and let $a$ be a left inverse of $b$.
From Identity \eqref{eqn: bi-gyroassociative law} and Item
\eqref{item: bigyration generated by b,a and b a left inverse of a},
we have $\rgyr{b, c}{a}\oplus_b(b\oplus_b c) = (a\oplus_b
b)\oplus_b\lgyr{a, b}{c} = 0\oplus_b c = c$.

\par \eqref{item: left inverse is a right inverse}. Let $a\in B$
and let $b$ be a left inverse of $a$. By (BG2), $b$ has a left
inverse, say $\tilde{b}$. From Items \eqref{item: left cancelation
law} and \eqref{item: bigyration generated by b,a and b a left
inverse of a}, we have
$$a = \rgyr{b, a}{\tilde{b}}\oplus_b(b\oplus_b a) = \rgyr{b, a}{\tilde{b}}\oplus_b 0 = \rgyr{b, a}{\tilde{b}} = \tilde{b}.$$
It follows that $a\oplus_b b = \tilde{b}\oplus_b b = 0$, which
proves $b$ is a right inverse of $a$.
\end{proof}

\begin{thm}\label{thm: unique two-sided inverse}
Any element of a bi-gyrogroupoid $B$ has a unique two-sided inverse
in $B$.
\end{thm}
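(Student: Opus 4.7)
The plan is to split the claim into existence and uniqueness, invoking only items already collected in Theorem \ref{thm: basic properties of bigyrogroup I}; no new gyro-machinery beyond that theorem should be needed.

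For existence, I would simply observe that axiom (BG2) supplies a left inverse $b$ of $a$, and Item \eqref{item: left inverse is a right inverse} of Theorem \ref{thm: basic properties of bigyrogroup I} promotes every left inverse to a right inverse, so $b$ is in fact two-sided.

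For uniqueness, suppose $b_1$ and $b_2$ are both two-sided inverses of $a$. The strategy is to apply the left cancellation identity (Item \eqref{item: left cancelation law}) and to simplify it using the trivialisation of the relevant right bi-gyration. The key preparatory remark is that $a\oplus_b b_1=0$ exhibits $a$ as a left inverse of $b_1$, so Item \eqref{item: bigyration generated by b,a and b a left inverse of a}, applied to the pair $(a,b_1)$, gives $\rgyr{a,b_1}{}=\id{B}$. Then Item \eqref{item: left cancelation law}, with the substitutions $b\leftarrow a$ and $c\leftarrow b_1$ and using $b_2$ in the role of the left inverse of $a$, yields
$$
b_1=\rgyr{a,b_1}{b_2}\oplus_b(a\oplus_b b_1)=b_2\oplus_b 0=b_2,
$$
where the last step uses (BG1).

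The step I expect to require the most care is the bookkeeping around the indices of the bi-gyration: Item \eqref{item: bigyration generated by b,a and b a left inverse of a} is formulated in terms of a pair (left inverse, element), so one must feed it $(a,b_1)$ rather than the more visually obvious $(b_1,a)$. This reversal is legitimate precisely because $b_1$ is assumed \emph{two-sided}, so $a$ itself lies on the left of $b_1$ and annihilates it. Once $\rgyr{a,b_1}{}$ is collapsed to $\id{B}$, the cancellation law of Item \eqref{item: left cancelation law} finishes the proof without any appeal to the more elaborate bi-gyroassociative laws of Theorem \ref{thm: left and right bi-gyroassociative law}.
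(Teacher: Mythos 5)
Your proposal is correct and follows essentially the same route as the paper: existence via (BG2) plus Item \eqref{item: left inverse is a right inverse}, and uniqueness by collapsing $\rgyr{a,b_1}{}$ to $\id{B}$ via Item \eqref{item: bigyration generated by b,a and b a left inverse of a} and then applying the left cancellation identity of Item \eqref{item: left cancelation law}. The only cosmetic difference is that the paper compares an arbitrary two-sided inverse against the one constructed in the existence step, whereas you compare two arbitrary ones; the computation is identical.
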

\begin{proof}
Let $a\in B$. By (BG2), $a$ has a left inverse $b$ in $B$. By
Theorem \ref{thm: basic properties of bigyrogroup I} \eqref{item:
left inverse is a right inverse}, $b$ is also a right inverse of
$a$. Hence, $b$ is a two-sided inverse of $a$. Suppose that $c$ is a
two-sided inverse of $a$. Then $a$ is a left inverse of $c$. By
Theorem \ref{thm: basic properties of bigyrogroup I} \eqref{item:
bigyration generated by b,a and b a left inverse of a}--\eqref{item:
left cancelation law}, $c = \rgyr{a, c}{b}\oplus_b(a\oplus_b c) =
\rgyr{a, c}{b}\oplus_b 0 = \rgyr{a, c}{b} = b$, which proves the
uniqueness of $b$.
\end{proof}

\par Following Theorem \ref{thm: unique two-sided inverse}, if $a$ is
an element of a bi-gyrogroupoid, then the unique two-sided inverse
of $a$ will be denoted by $\ominus_b a$. We also write $a\ominus_b
b$ instead of $a\oplus_b(\ominus_b b)$. As a consequence of Theorems
\ref{thm: basic properties of bigyrogroup I} and \ref{thm: unique
two-sided inverse}, we derive the following theorem.

\begin{thm}\label{thm: properties of unique inverse}
Let $B$ be a bi-gyrogroupoid. The following properties are true for
all $a, b, c\in B$:
\begin{enumerate}
    \item\label{item: --a = a} $\ominus_b(\ominus_b a) = a$;
    \item\label{item: - commutes with bi-gyration} $\lgyr{a, b}{(\ominus_b c)} = \ominus_b\lgyr{a,
    b}{c}$ and $\rgyr{a, b}{(\ominus_b c)} = \ominus_b\rgyr{a,
    b}{c}$;
    \item\label{item: gyration of a and -a} $\lgyr{a,\ominus_b a}{} = \lgyr{\ominus_b a, a}{} = \rgyr{a,\ominus_b a}{} = \rgyr{\ominus_b a, a}{} =
    \id{B}$.
\end{enumerate}
\end{thm}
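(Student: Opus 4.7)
The three items each reduce quickly to results already in hand; the plan is to treat them in the order (1), (3), (2), since (2) is the least immediate and benefits from knowing (1).

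For item \eqref{item: --a = a}, the strategy is to exhibit $a$ as a two-sided inverse of $\ominus_b a$ and then invoke uniqueness (Theorem \ref{thm: unique two-sided inverse}). By definition $a\oplus_b(\ominus_b a)=0$, so $a$ is a left inverse of $\ominus_b a$, and by Theorem \ref{thm: basic properties of bigyrogroup I}\eqref{item: left inverse is a right inverse} it is also a right inverse. Hence $a$ is the unique two-sided inverse of $\ominus_b a$, i.e.\ $\ominus_b(\ominus_b a)=a$.

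For item \eqref{item: gyration of a and -a}, the plan is a direct application of Theorem \ref{thm: basic properties of bigyrogroup I}\eqref{item: bigyration generated by b,a and b a left inverse of a}, which asserts that both left and right gyrations of a pair $(x,y)$ collapse to $\id{B}$ whenever $x$ is a left inverse of $y$. Since $\ominus_b a$ is a left inverse of $a$, we obtain $\lgyr{\ominus_b a,a}{}=\rgyr{\ominus_b a,a}{}=\id{B}$ immediately. For the other pair, use \eqref{item: --a = a}: the relation $a\oplus_b(\ominus_b a)=0$ says that $a$ is a left inverse of $\ominus_b a$, so the same theorem applied to the pair $(a,\ominus_b a)$ yields $\lgyr{a,\ominus_b a}{}=\rgyr{a,\ominus_b a}{}=\id{B}$.

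For item \eqref{item: - commutes with bi-gyration}, the plan exploits the fact that $\lgyr{a,b}{}$ and $\rgyr{a,b}{}$ are groupoid automorphisms of $(B,\oplus_b)$ and that Theorem \ref{thm: basic properties of bigyrogroup I}\eqref{item: bigyration of 0} gives $\lgyr{a,b}{0}=\rgyr{a,b}{0}=0$. Applying $\lgyr{a,b}{}$ to both $c\oplus_b(\ominus_b c)=0$ and $(\ominus_b c)\oplus_b c=0$ and distributing over $\oplus_b$ shows that $\lgyr{a,b}{(\ominus_b c)}$ is a two-sided inverse of $\lgyr{a,b}{c}$; by uniqueness (Theorem \ref{thm: unique two-sided inverse}) it equals $\ominus_b\lgyr{a,b}{c}$. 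The argument for $\rgyr{a,b}{}$ is identical.

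None of the three parts should present a genuine obstacle: the only subtlety is remembering to use \emph{both} one-sided identities in item \eqref{item: - commutes with bi-gyration} so that uniqueness of the two-sided inverse actually applies, rather than settling for a one-sided inverse statement.
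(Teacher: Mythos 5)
Your proposal is correct, and it fleshes out exactly the derivation the paper intends: the paper gives no explicit proof, stating only that the theorem is ``a consequence of'' Theorem \ref{thm: basic properties of bigyrogroup I} and Theorem \ref{thm: unique two-sided inverse}, which is precisely how you argue each item (uniqueness of the two-sided inverse for (1) and (2), the left-inverse criterion for trivial gyrations for (3)).
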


\par Any bi-gyrogroupoid satisfies a generalized cancellation law,
as shown in the following theorem.

\begin{thm}\label{thm: left-right cancellation law}
Any bi-gyrogroupoid $B$ satisfies the left cancellation law
\begin{equation}\label{eqn: left cancellation law}
\ominus_b\rgyr{a, b}{a}\oplus_b(a\oplus_b b) = b
\end{equation}
and the right cancellation law
\begin{equation}\label{eqn: right cancellation law}
(a\oplus_b b)\ominus_b\lgyr{a, b}{b} = a
\end{equation}
for all $a, b\in B$.
\end{thm}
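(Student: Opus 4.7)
The plan is to derive both cancellation laws as direct specializations of results already established, rather than attacking them from scratch. The key observation is that Theorem \ref{thm: basic properties of bigyrogroup I}\eqref{item: left cancelation law} already has the shape of a ``left cancellation'', and Theorem \ref{thm: properties of unique inverse}\eqref{item: - commutes with bi-gyration} gives us the tool to move $\ominus_b$ inside and outside the bi-gyrations.

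For the left cancellation law, I would apply Theorem \ref{thm: basic properties of bigyrogroup I}\eqref{item: left cancelation law} with $a$ replaced by $\ominus_b a$: by Theorems \ref{thm: basic properties of bigyrogroup I}\eqref{item: left inverse is a right inverse} and \ref{thm: unique two-sided inverse}, $\ominus_b a$ is a left inverse of $a$, so
\[
\rgyr{a,b}{(\ominus_b a)} \oplus_b (a\oplus_b b) = b.
\]
Then Theorem \ref{thm: properties of unique inverse}\eqref{item: - commutes with bi-gyration} rewrites $\rgyr{a,b}{(\ominus_b a)}$ as $\ominus_b\rgyr{a,b}{a}$, which gives \eqref{eqn: left cancellation law}.

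For the right cancellation law, I would specialize the bi-gyroassociative law \eqref{eqn: bi-gyroassociative law} at $c=\ominus_b b$:
\[
(a\oplus_b b)\oplus_b \lgyr{a,b}{(\ominus_b b)} = \rgyr{b,\ominus_b b}{a}\oplus_b \bigl(b\oplus_b (\ominus_b b)\bigr).
\]
The right-hand side collapses: $b\oplus_b(\ominus_b b)=0$, and by Theorem \ref{thm: properties of unique inverse}\eqref{item: gyration of a and -a} we have $\rgyr{b,\ominus_b b}{}=\id{B}$, so the right-hand side equals $a$. On the left, Theorem \ref{thm: properties of unique inverse}\eqref{item: - commutes with bi-gyration} converts $\lgyr{a,b}{(\ominus_b b)}$ into $\ominus_b\lgyr{a,b}{b}$, yielding \eqref{eqn: right cancellation law}.

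Neither step looks like a serious obstacle; the only thing to be careful about is recognizing the correct substitutions ($\ominus_b a$ for the left law, $c=\ominus_b b$ for the right law) so that the ``$\ominus_b$ commutes with bi-gyrations'' identity can be applied at the end to produce exactly the asserted form. No further machinery beyond Theorems \ref{thm: basic properties of bigyrogroup I}, \ref{thm: unique two-sided inverse}, and \ref{thm: properties of unique inverse} is needed.
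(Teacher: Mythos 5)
Your proof is correct and follows essentially the same route as the paper: the left cancellation law via Theorem \ref{thm: basic properties of bigyrogroup I}\eqref{item: left cancelation law} applied to the left inverse $\ominus_b a$ together with Theorem \ref{thm: properties of unique inverse}\eqref{item: - commutes with bi-gyration}, and the right cancellation law by setting $c=\ominus_b b$ in (BG3). You have merely spelled out the substitutions that the paper leaves implicit.
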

\begin{proof}
Identity \eqref{eqn: left cancellation law} follows from Theorem
\ref{thm: basic properties of bigyrogroup I} \eqref{item: left
cancelation law} and Theorem \ref{thm: properties of unique inverse}
\eqref{item: - commutes with bi-gyration}. \mbox{Identity}
\eqref{eqn: right cancellation law} follows from (BG3) with $c
=\ominus_b b$.
\end{proof}

\begin{defn}[Bi-gyrocommutative bi-gyrogroupoid]\label{def: Bi-gyrocommutative bi-groupoid}
A bi-gyrogroupoid $B$ is \textit{bi-gyrocommutative} if it satisfies
the bi-gyrocommutative law
\begin{equation}\label{eqn: bigyrocommutative law}
a\oplus_b b = (\lgyr{a, b}{}\circ\rgyr{a, b}{})(b\oplus_b a)
\end{equation}
for all $a, b\in B$.
\end{defn}

\begin{defn}[Automorphic inverse property]\label{def: AIP}
A bi-gyrogroupoid $B$ has the \textit{automorphic inverse property}
if
$$\ominus_b (a\oplus_b b) = (\ominus_b a)\oplus_b(\ominus_b b)$$
for all $a, b\in B$.
\end{defn}

\begin{defn}[Bi-gyration inversion law]\label{def: ISP}
A bi-gyrogroupoid $B$ satisfies the \textit{bi-gyration inversion
law} if
$$\ilgyr{a, b}{} = \lgyr{b, a}{}\quad\textrm{and}\quad\irgyr{a, b}{} = \rgyr{b, a}{}$$
for all $a, b\in B$.
\end{defn}

\par Under certain conditions, the bi-gyrocommutative property and
the auto-morphic inverse property are equivalent, as shown in the
following theorem.

\begin{thm}\label{thm: to be bigyrocommutative}
Let $B$ be a bi-gyrogroupoid such that
\begin{enumerate}
    \item $\lgyr{a, b}{}\circ\rgyr{a, b}{} = \rgyr{a, b}{}\circ\lgyr{a,
    b}{}$;
    \item $\ilgyr{a, b}{} = \lgyr{\ominus_b b, \ominus_b a}{}$ and $\irgyr{a, b}{} = \rgyr{\ominus_b b,
    \ominus_b a}{}$;
    \item $\ominus_b(a\oplus_b b) = (\lgyr{a, b}{}\circ\rgyr{a, b}{})(\ominus_b b \ominus_b a)$
\end{enumerate}
for all $a, b\in B$. If $B$ is bi-gyrocommutative, then $B$ has the
automorphic inverse property. The converse is true if $B$ satisfies
the bi-gyration inversion law.
\end{thm}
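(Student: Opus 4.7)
The plan is to prove both implications using condition (3) as the central bridge between bi-gyrocommutativity and the automorphic inverse property (AIP). For the forward direction, I begin with the right side of (3), rewrite $\ominus_b b \ominus_b a = (\ominus_b b) \oplus_b (\ominus_b a)$, and apply the bi-gyrocommutative law at the pair $(\ominus_b b, \ominus_b a)$. This produces an extra factor $\lgyr{\ominus_b b, \ominus_b a}{} \circ \rgyr{\ominus_b b, \ominus_b a}{}$ acting on $(\ominus_b a) \oplus_b (\ominus_b b)$, which by condition (2) equals $\ilgyr{a,b}{} \circ \irgyr{a,b}{}$. The resulting four-fold composite $\lgyr{a,b}{} \circ \rgyr{a,b}{} \circ \ilgyr{a,b}{} \circ \irgyr{a,b}{}$ collapses to $\id{B}$ once condition (1) is used to commute $\rgyr{a,b}{}$ past $\ilgyr{a,b}{}$, after which each automorphism meets its inverse. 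The identity $\ominus_b(a \oplus_b b) = (\ominus_b a) \oplus_b (\ominus_b b)$ then follows, which is AIP.

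For the converse, I again start from (3). Applying AIP (with $a$ and $b$ interchanged) gives $(\ominus_b b) \oplus_b (\ominus_b a) = \ominus_b(b \oplus_b a)$, so the right side of (3) rewrites as $(\lgyr{a,b}{} \circ \rgyr{a,b}{})(\ominus_b(b \oplus_b a))$. By Theorem \ref{thm: properties of unique inverse}\eqref{item: - commutes with bi-gyration}, both $\lgyr{a,b}{}$ and $\rgyr{a,b}{}$ commute with $\ominus_b$, so I pull $\ominus_b$ outside the composition. Cancelling the outer $\ominus_b$ on both sides (valid because $\ominus_b$ is an involution by Theorem \ref{thm: properties of unique inverse}\eqref{item: --a = a}) yields the bi-gyrocommutative law. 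The bi-gyration inversion law is brought in at the point where one wishes to rewrite the inverse bi-gyroautomorphisms $\ilgyr{a,b}{}$ and $\irgyr{a,b}{}$ appearing in condition (2) as $\lgyr{b,a}{}$ and $\rgyr{b,a}{}$, which keeps condition (2) consistent with the interchange of $a$ and $b$ performed when applying AIP.

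The main obstacle is the bookkeeping in the forward direction, where four bi-gyroautomorphisms appear in a single composite and must be rearranged so that each factor cancels against its inverse. The decisive observation is elementary but essential: commuting bijections have commuting inverses, so condition (1) immediately upgrades to the commutation of $\rgyr{a,b}{}$ with $\ilgyr{a,b}{}$ (and of $\lgyr{a,b}{}$ with $\irgyr{a,b}{}$), which is precisely what makes the four-fold composite collapse. In the converse direction, by contrast, the argument is almost mechanical once condition (3) is combined with AIP and the fact that $\ominus_b$ is an involutive automorphism commuting with the bi-gyroautomorphisms.
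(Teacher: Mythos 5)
Your proof is correct. The forward direction is essentially the paper's computation run in the opposite direction: the paper starts from the bi-gyrocommutative law at the pair $(b,a)$, inverts the resulting composite via conditions (2) and (1), and then recognizes the outcome as $\ominus_b(\ominus_b a\ominus_b b)$ via condition (3) at negated arguments; you instead start from the right-hand side of condition (3), substitute the bi-gyrocommutative law at $(\ominus_b b,\ominus_b a)$, and collapse the four-fold composite $\lgyr{a,b}{}\circ\rgyr{a,b}{}\circ\ilgyr{a,b}{}\circ\irgyr{a,b}{}$ to $\id{B}$ using the (correct) observation that commuting bijections have commuting inverses. The same three hypotheses do the same work, so this is only a cosmetic rearrangement. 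The converse, however, is genuinely different and in fact proves more than the paper does: your argument uses only condition (3), the automorphic inverse property applied to $b\oplus_b a$, and the facts that $\ominus_b$ is an involution which commutes with bi-gyrations (Theorem \ref{thm: properties of unique inverse}); it never invokes the bi-gyration inversion law, nor conditions (1) and (2). The paper's converse instead reruns its forward chain backwards, and there the inversion law is genuinely needed to identify $(\lgyr{b,a}{}\circ\rgyr{b,a}{})^{-1}$ with $\lgyr{a,b}{}\circ\rgyr{a,b}{}$. The one blemish is your closing remark that the inversion law enters when rewriting $\ilgyr{a,b}{}$ and $\irgyr{a,b}{}$ from condition (2): your converse never uses condition (2), so that sentence is vestigial. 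You should either delete it or, better, state explicitly that your proof of the converse dispenses with the inversion-law hypothesis altogether, which sharpens the theorem.
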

\begin{proof}
Suppose that $B$ is bi-gyrocommutative and let $a, b\in B$. Then
$b\oplus_b a = (\lgyr{b, a}{}\circ\rgyr{b, a}{})(a\oplus_b b)$ and
hence
\begin{eqnarray}\label{eqn: in proof equivalent of bigyrocomm. and AIP}
\begin{split}
a\oplus_b b &= (\lgyr{b, a}{}\circ\rgyr{b, a}{})^{-1}(b\oplus_b
a)\\
{} &= (\irgyr{b, a}{}\circ\ilgyr{b, a}{})(b\oplus_b a)\\
{} &= (\rgyr{\ominus_b a, \ominus_b b}{}\circ\lgyr{\ominus_b a,
\ominus_b b}{})(b \oplus_b a)\\
{} &= (\lgyr{\ominus_b a, \ominus_b b}{}\circ\rgyr{\ominus_b a,
\ominus_b b}{})(b \oplus_b a)\\
{} &= \ominus_b(\ominus_b a\ominus_b b).
\end{split}
\end{eqnarray}
The extreme sides of \eqref{eqn: in proof equivalent of bigyrocomm.
and AIP} imply $\ominus_b (a\oplus_b b) = \ominus_b a\ominus_b b$
and so $B$ has the automorphic inverse property.
\par Suppose that $B$ satisfies the bi-gyration inversion law and let $a, b\in B$. As
in \eqref{eqn: in proof equivalent of bigyrocomm. and AIP}, we have
$$(\lgyr{a, b}{}\circ\rgyr{a, b}{})(b\oplus_b a) = \ominus_b(\ominus_b a\ominus_b b) = a\oplus_b b.$$
Hence, $B$ is bi-gyrocommutative.
\end{proof}

\section{Bi-transversal decomposition}\label{sec: bitransversal}
\par In this section we study the bi-decomposition $\Gam = H_LBH_R$
of a group $\Gam$ into a subset $B$ and subgroups $H_L$ and $H_R$ of
$\Gam$. The bi-decomposition $\Gam = H_LBH_R$ leads to a
bi-gyrogroupoid $B$, and under certain conditions, a group-like
structure for $B$, called a \textit{bi-gyrogroup}. Further, in the
special case when $H_L$ is the trivial subgroup of $\Gam$, the
bi-decomposition $\Gam = H_LBH_R$ descends to the decomposition
studied in \cite{TFAU2000IDG}. It turns out that the bi-gyrogroup
$B$ induced by the bi-decomposition of $\Gam$ forms a gyrogroup, a
rich algebraic structure extensively studied, for instance, in
\cite{AU2001BEA, AU2008FMG, AU2008AHG, AU2009AGS, MF2009FMG,
MFGR2011MGC, MF2014HAE, MF2015HAM, TSKW2015ITG, TSKW2014LTG,
TSKW2015EGB, NSAU2013TER, RLAY2013TRG}.

\begin{defn}[Bi-transversal]\label{def: Bitrasversal}
A subset $B$ of a group $\Gam$ is said to be a
\textit{bi-transversal} of subgroups $H_L$ and $H_R$ of $\Gam$ if
every element $g$ of $\Gam$ can be written uniquely as $g = h_\ell
bh_r$, where $h_\ell\in H_L$, $b\in B$, and $h_r\in H_R$.
\end{defn}

\par Let $B$ be a bi-transversal of subgroups $H_L$ and $H_R$ in a group $\Gam$.
For each pair of elements $b_1$ and $b_2$ in $B$, the product
$b_1b_2$ gives unique elements $h_\ell(b_1, b_2)\in H_L$, $b_1\odot
b_2\in B$, and $h_r(b_1, b_2)\in H_R$ such that
\begin{equation}\label{eqn: Unique decomposition}
b_1b_2 = h_\ell(b_1, b_2)(b_1\odot b_2)h_r(b_1, b_2).
\end{equation}
Hence, any bi-transversal $B$ of $H_L$ and $H_R$ gives rise to
\begin{enumerate}
    \item a binary operation $\odot$ in $B$, called the \textit{bi-transversal operation};
    \item a map $h_\ell\colon B\times B\to H_L$, called \textit{the left
    transversal map};
    \item a map $h_r\colon B\times B\to H_R$, called \textit{the
    right transversal map}.
\end{enumerate}
The pair $(B, \odot)$ is called the \textit{bi-transversal groupoid
of $H_L$ and $H_R$}.

\par We will see shortly that the left and right transversal maps of
the bi-transversal groupoid $(B, \odot)$ generate automorphisms of
$(B, \odot)$, called \textit{left} and \textit{right}
\textit{gyrations} or, collectively, \textit{bi-gyrations}.
Accordingly, left and right gyrations are also called \textit{left}
and \textit{right} \textit{gyroautomorphisms}.

\begin{defn}[Bi-gyration]\label{def: Bigyration}
Let $B$ be a bi-transversal of subgroups $H_L$ and $H_R$ in a group
$\Gam$. Let $h_\ell$ and $h_r$ be the left and right transversal
maps, respectively. The \textit{left gyration $\lgyr{b_1, b_2}{}$ of
$B$ generated by $b_1, b_2\in B$} is defined by
\begin{equation}
\lgyr{b_1, b_2}{b} = h_r(b_1, b_2)bh_r(b_1, b_2)^{-1},\quad b\in B.
\end{equation}
The \textit{right gyration $\rgyr{b_1, b_2}{}$ of $B$ generated by
$b_1, b_2\in B$} is defined by
\begin{equation}
\rgyr{b_1, b_2}{b} = h_\ell(b_1, b_2)^{-1}bh_\ell(b_1, b_2),\quad
b\in B.
\end{equation}
\end{defn}

\begin{rem}
\par In Definition \ref{def: Bigyration}, left gyrations
are associated with the right transversal map $h_r$, and right
gyrations are associated with the left transversal map $h_\ell$.
\end{rem}

\par We use the convenient notation $x^h = hxh^{-1}$
and denote \textit{conjugation by $h$} by $\alp_h$. That is,
$\alp_h(x) = x^h = hxh^{-1}$. With this notation, the left and right
gyrations in Definition \ref{def: Bigyration} read
\begin{equation}\label{eqn: bigyration in conjugate form}
\lgyr{a, b}{} = \alp_{h_r(a, b)}\quad\textrm{ and }\quad \rgyr{a,
b}{} = \alp_{h_\ell(a, b)^{-1}}
\end{equation}
for all $a, b\in B$. Let $B$ be a non-empty subset of a group
$\Gam$. We say that a subgroup $H$ of $\Gam$ \textit{normalizes} $B$
if $hBh^{-1}\subseteq B$ for all $h\in H$.

\begin{defn}[Bi-gyrotransversal]\label{def: Bigyrotrasversal}
A bi-transversal $B$ of subgroups $H_L$ and $H_R$ in a group $\Gam$
is a \textit{bi-gyrotransversal} if
\begin{enumerate}
    \item $H_L$ and $H_R$ normalize $B$, and
    \item $h_\ell h_r = h_r h_\ell$ for all $h_\ell\in H_L$, $h_r\in H_R$.
\end{enumerate}
\end{defn}

\begin{prop}
If $B$ is a bi-gyrotransversal of subgroups $H_L$ and $H_R$ in a
group $\Gam$, then $H_L H_R$ is a subgroup of $\Gam$ with normal
subgroups $H_L$ and $H_R$. If $B$ contains the identity $1$ of
$\Gam$, then $H_L\cap H_R = \set{1}$. In this case, $H_L H_R$ is
isomorphic to the direct product $H_L\times H_R$ as groups.
\end{prop}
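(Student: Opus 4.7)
The plan is to break the proposition into four claims and handle them in turn, using only the two defining conditions of a bi-gyrotransversal (namely, that $H_L$ and $H_R$ normalize $B$ and that elements of $H_L$ commute with elements of $H_R$) together with the uniqueness of the bi-transversal decomposition.

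First, I would verify that $H_L H_R$ is a subgroup. The key observation is that the commutativity condition $h_\ell h_r = h_r h_\ell$ lets one swap factors freely, so for $h_1 k_1, h_2 k_2 \in H_L H_R$ we get $(h_1 k_1)(h_2 k_2) = (h_1 h_2)(k_1 k_2) \in H_L H_R$, and $(h k)^{-1} = k^{-1} h^{-1} = h^{-1} k^{-1} \in H_L H_R$. Closure under identity is immediate. The normalization property of $B$ is not used here; only the commutation matters. Next, to show $H_L \trianglelefteq H_L H_R$, I pick $hk \in H_L H_R$ and $x \in H_L$ and compute $(hk) x (hk)^{-1} = h (k x k^{-1}) h^{-1}$; since $k$ commutes with $x \in H_L$, this equals $h x h^{-1} \in H_L$. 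The argument for $H_R$ is symmetric.

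For the second half, assume $1 \in B$. I would exploit the uniqueness of the bi-transversal decomposition: if $g \in H_L \cap H_R$, then $g$ admits the two factorizations
\begin{equation*}
g = g \cdot 1 \cdot 1 = 1 \cdot 1 \cdot g,
\end{equation*}
each of the form (element of $H_L$) $\cdot$ (element of $B$) $\cdot$ (element of $H_R$) because $1 \in B$. Uniqueness of such a factorization forces the $H_L$-component of $g$ to equal $1$, hence $g = 1$. This gives $H_L \cap H_R = \{1\}$.

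Finally, for the isomorphism $H_L H_R \cong H_L \times H_R$, I would define $\varphi \colon H_L \times H_R \to H_L H_R$ by $\varphi(h, k) = hk$. Surjectivity is by definition of $H_L H_R$; the commutativity condition again ensures $\varphi$ is a homomorphism, since $\varphi(h_1, k_1)\varphi(h_2, k_2) = h_1 k_1 h_2 k_2 = h_1 h_2 k_1 k_2 = \varphi((h_1,k_1)(h_2,k_2))$. Injectivity follows from the triviality of $H_L \cap H_R$: if $hk = 1$, then $h = k^{-1} \in H_L \cap H_R = \{1\}$. The main obstacle, if any, is the uniqueness step for $H_L \cap H_R = \{1\}$, since one must notice that $1 \in B$ is exactly what makes both factorizations of $g$ lie in the required form; everything else is a routine verification using the two axioms of a bi-gyrotransversal.
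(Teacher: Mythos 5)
Your proof is correct and follows essentially the same route as the paper's: commutation of $H_L$ with $H_R$ yields the subgroup and normality claims, uniqueness of the bi-transversal factorization (using $1\in B$) forces $H_L\cap H_R=\set{1}$, and the direct-product recognition is standard. The only cosmetic differences are that you verify the textbook facts directly where the paper cites them, and you compare two factorizations of $g$ itself rather than of the identity $1 = h\cdot 1\cdot h^{-1}$; both are valid applications of the same uniqueness argument.
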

\begin{proof}
Since $H_L H_R = H_R H_L$, $H_L H_R$ forms a subgroup of $\Gam$ by
Proposition 14 of \cite[Chapter 3]{DDRF2004AA}. If $g\in H_L H_R$,
then $g = h_\ell h_r$ for some $h_\ell\in H_L$ and $h_r\in H_R$. For
any $h\in H_L$, $h_rh = hh_r$ implies $ghg^{-1} = h_\ell h
h^{-1}_\ell\in H_L$. Hence, $gH_L g^{-1}\subseteq H_L$. This proves
$H_L\unlhd H_L H_R$. Similarly, $H_R\unlhd H_L H_R$.

\par Suppose that $1\in B$ and that $h\in H_L\cap H_R$. The unique
decomposition of $1$, $1 = hh^{-1} = h1h^{-1}$, implies $h = 1$.
Hence, $H_L\cap H_R = \set{1}$. It follows from Theorem 9 of
\cite[Chapter 5]{DDRF2004AA} that $H_L H_R\cong H_L\times H_R$ as
groups.
\end{proof}

\begin{thm}\label{thm: conjugation is an automorphism}
Let $B$ be a bi-gyrotransversal of subgroups $H_L$ and $H_R$ in a
group $\Gam$. If $h\in H_LH_R$, then conjugation by $h$ is an
automorphism of $(B, \odot)$.
\end{thm}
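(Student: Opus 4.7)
The plan is to verify two properties: (i) that conjugation $\alp_h$ restricts to a self-map of $B$ that is bijective, and (ii) that this restriction is a homomorphism of the groupoid $(B,\odot)$.

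For (i), I would write $h = h_\ell h_r$ with $h_\ell\in H_L$ and $h_r\in H_R$, which is possible since $h\in H_LH_R$. Because both $H_L$ and $H_R$ normalize $B$ by definition of a bi-gyrotransversal, we get $\alp_h(B)=\alp_{h_\ell}\bigl(\alp_{h_r}(B)\bigr)\subseteq B$. The preceding proposition ensures that $H_LH_R$ is a subgroup of $\Gam$, so $h^{-1}$ also lies in $H_LH_R$ and the same reasoning yields $\alp_{h^{-1}}(B)\subseteq B$. Combined with $\alp_h\circ\alp_{h^{-1}}=\id{\Gam}$, this shows that the restriction $\alp_h|_B$ is a bijection of $B$.

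For (ii), the idea is to apply $\alp_h$, which is an automorphism of $\Gam$, to the defining bi-transversal identity $b_1b_2=h_\ell(b_1,b_2)(b_1\odot b_2)h_r(b_1,b_2)$, obtaining
\[
\alp_h(b_1)\alp_h(b_2)=\alp_h(h_\ell(b_1,b_2))\,\alp_h(b_1\odot b_2)\,\alp_h(h_r(b_1,b_2)).
\]
Normality of $H_L$ and $H_R$ in $H_LH_R$, guaranteed again by the preceding proposition, places the outer factors in $H_L$ and $H_R$ respectively, while step (i) places the middle factor in $B$. Thus the right-hand side is a bi-transversal factorization of $\alp_h(b_1)\alp_h(b_2)$. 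Comparing it with the bi-transversal factorization $\alp_h(b_1)\alp_h(b_2)=h_\ell(\alp_h(b_1),\alp_h(b_2))(\alp_h(b_1)\odot\alp_h(b_2))h_r(\alp_h(b_1),\alp_h(b_2))$ produced by the definition of $\odot$, the uniqueness clause of Definition~\ref{def: Bitrasversal} forces the $B$-factors to agree, i.e.\ $\alp_h(b_1\odot b_2)=\alp_h(b_1)\odot\alp_h(b_2)$.

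The only real conceptual point—and thus the main obstacle, slight as it is—is the interplay between the normality of $H_L$ and $H_R$ inside $H_LH_R$ (needed so that the three factors after conjugation land in the correct subsets) and the uniqueness of the bi-transversal decomposition (which then extracts the homomorphism property without any further computation). The commutativity hypothesis $h_\ell h_r=h_r h_\ell$ in the definition of a bi-gyrotransversal enters only implicitly, through the preceding proposition.
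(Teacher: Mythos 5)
Your proposal is correct and follows essentially the same route as the paper: conjugate the unique decomposition $b_1b_2 = h_\ell(b_1,b_2)(b_1\odot b_2)h_r(b_1,b_2)$, use normality of $H_L$ and $H_R$ in $H_LH_R$ to keep the outer factors in the right subgroups, and invoke uniqueness of the bi-transversal factorization. Your bijectivity argument via $\alp_{h^{-1}}$ is slightly more explicit than the paper's, but it is the same idea.
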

\begin{proof}
Note first that $H_LH_R$ normalizes $B$. In fact, if $h = h_\ell
h_r$ with $h_\ell$ in $H_L$ and $h_r$ in $H_R$, then $hBh^{-1} =
h_\ell(h_rBh_r^{-1})h_\ell^{-1}\subseteq B$ for $H_R$ and $H_L$
normalize $B$.

\par Let $h\in H_LH_R$. Since $H_LH_R$ normalizes $B$,
$\alp_h$ is a bijection from $B$ to itself. Next, we will show that
$(x\odot y)^h = x^h\odot y^h$ for all $x, y\in B$. Employing
\eqref{eqn: Unique decomposition}, we have
$$
(xy)^h = (h_\ell(x, y)(x\odot y)h_r(x, y))^h = h_\ell(x, y)^h(x\odot
y)^hh_r(x, y)^h.
$$
Since $x^h, y^h\in B$, we also have
$$x^hy^h = h_\ell(x^h, y^h)(x^h\odot y^h)h_r(x^h, y^h).$$
Note that $h_\ell(x, y)^h\in H_L$ and $h_r(x, y)^h \in H_R$ because
$H_L$ and $H_R$ are normal in $H_LH_R$. Thus, $(xy)^h = x^hy^h$
implies
$$
h_\ell(x, y)^h = h_\ell(x^h, y^h), \quad (x\odot y)^h = x^h\odot
y^h,\quad\textrm{and}\quad h_r(x, y)^h = h_r(x^h, y^h),
$$
which completes the proof.
\end{proof}

\begin{cor}\label{cor: bigyration is automorphism}
Let $B$ be a bi-gyrotransversal of subgroups $H_L$ and $H_R$ in a
group $\Gam$. Then $\lgyr{a, b}{}$ and $\rgyr{a, b}{}$ are
automorphisms of $(B, \odot)$ for all $a, b\in B$.
\end{cor}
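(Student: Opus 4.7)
The plan is to observe that this corollary is an immediate consequence of the preceding theorem, once the gyrations are rewritten as conjugations by elements of $H_LH_R$. The setup in equation (\ref{eqn: bigyration in conjugate form}) already performs this identification: for any $a,b\in B$ one has $\lgyr{a,b}{} = \alp_{h_r(a,b)}$ and $\rgyr{a,b}{} = \alp_{h_\ell(a,b)^{-1}}$, where $\alp_h$ denotes conjugation by $h$.

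So what I would do is check that the conjugating elements lie in the subgroup $H_LH_R$, and then invoke Theorem \ref{thm: conjugation is an automorphism}. Since $h_r(a,b)\in H_R\subseteq H_LH_R$ (writing $h_r(a,b) = 1\cdot h_r(a,b)$), the map $\lgyr{a,b}{} = \alp_{h_r(a,b)}$ is an automorphism of $(B,\odot)$. Similarly, $h_\ell(a,b)\in H_L$ is a subgroup element, hence $h_\ell(a,b)^{-1}\in H_L\subseteq H_LH_R$, and so $\rgyr{a,b}{} = \alp_{h_\ell(a,b)^{-1}}$ is also an automorphism of $(B,\odot)$.

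There is no real obstacle here since Theorem \ref{thm: conjugation is an automorphism} does all of the substantive work of verifying that conjugation by elements of $H_LH_R$ preserves both the set $B$ and the bi-transversal operation $\odot$. The corollary is thus a direct specialization to the particular conjugating elements $h_r(a,b)$ and $h_\ell(a,b)^{-1}$ arising from the bi-transversal factorization. I would present the proof in two or three lines, citing equation (\ref{eqn: bigyration in conjugate form}) and Theorem \ref{thm: conjugation is an automorphism}.
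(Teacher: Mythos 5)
Your proposal is correct and matches the paper's own proof exactly: the paper likewise justifies the corollary by the identities $\lgyr{a, b}{} = \alp_{h_r(a, b)}$ and $\rgyr{a, b}{} = \alp_{h_\ell(a, b)^{-1}}$ together with Theorem \ref{thm: conjugation is an automorphism}. Your only addition is the (correct and worthwhile) explicit remark that $h_r(a,b)$ and $h_\ell(a,b)^{-1}$ lie in $H_LH_R$, which the paper leaves implicit.
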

\begin{proof}
This is because $\lgyr{a, b}{} = \alp_{h_r(a, b)}$ and $\rgyr{a,
b}{} = \alp_{h_\ell(a, b)^{-1}}$.
\end{proof}

\par The next theorem provides us with \textit{commuting
relations} between conjugation automorphisms of the bi-transversal
groupoid $(B, \odot)$ and its bi-gyrations.

\begin{thm}\label{thm: commuting relation conjugation and gyration automorphism}
Let $B$ be a bi-gyrotransversal of subgroups $H_L$ and $H_R$ in a
group $\Gam$. The following commuting relations hold.

\newpage

\begin{enumerate}
    \item\label{item: left and right gyration commute} $\lgyr{a, b}{}\circ\rgyr{c, d}{} = \rgyr{c, d}{}\circ\lgyr{a,
    b}{}$ for all $a, b, c, d\in B$.
    \item\label{item: commuting relation of conjugation and left gyration} $\alp_h\circ\lgyr{a, b}{} = \lgyr{\alp_h(a),
    \alp_h(b)}{}\circ\alp_h$ for all $h\in H_LH_R$ and $a, b\in B$.
    \item\label{item: commuting relation of conjugation and right gyration} $\alp_h\circ\rgyr{a, b}{} = \rgyr{\alp_h(a),
    \alp_h(b)}{}\circ\alp_h$ for all $h\in H_LH_R$ and $a, b\in B$.
\end{enumerate}
\end{thm}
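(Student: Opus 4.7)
The plan is to reduce each commuting relation to a short algebraic identity about conjugation automorphisms $\alp_h$, using the representations $\lgyr{a, b}{} = \alp_{h_r(a, b)}$ and $\rgyr{a, b}{} = \alp_{h_\ell(a, b)^{-1}}$ from \eqref{eqn: bigyration in conjugate form} together with the functorial identity $\alp_h \circ \alp_g = \alp_{hg}$ (which just says that conjugation by $hg$ equals conjugation by $h$ after conjugation by $g$).

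For item \eqref{item: left and right gyration commute}, I would observe that $h_r(a,b)\in H_R$ while $h_\ell(c,d)^{-1}\in H_L$. By condition (2) of Definition \ref{def: Bigyrotrasversal}, elements of $H_L$ and $H_R$ commute inside $\Gam$, so the two group elements $h_r(a,b)$ and $h_\ell(c,d)^{-1}$ commute, and hence $\lgyr{a,b}{}\circ\rgyr{c,d}{} = \alp_{h_r(a,b)h_\ell(c,d)^{-1}} = \alp_{h_\ell(c,d)^{-1}h_r(a,b)} = \rgyr{c,d}{}\circ\lgyr{a,b}{}$.

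For items \eqref{item: commuting relation of conjugation and left gyration} and \eqref{item: commuting relation of conjugation and right gyration}, the key ingredient is the pair of equivariance identities
\[
h_\ell(a,b)^h = h_\ell(\alp_h(a),\alp_h(b)), \qquad h_r(a,b)^h = h_r(\alp_h(a),\alp_h(b)),
\]
which for $h\in H_LH_R$ are exactly what was extracted from the uniqueness of the bi-decomposition inside the proof of Theorem \ref{thm: conjugation is an automorphism}. Granting them, item \eqref{item: commuting relation of conjugation and left gyration} reduces to the chain $\alp_h\circ\alp_{h_r(a,b)} = \alp_{h\,h_r(a,b)} = \alp_{h_r(a,b)^h\cdot h} = \alp_{h_r(\alp_h(a),\alp_h(b))}\circ\alp_h$, and item \eqref{item: commuting relation of conjugation and right gyration} follows by the same computation applied to $h_\ell(a,b)^{-1}$, using $(g^{-1})^h = (g^h)^{-1}$ to move inversion through conjugation.

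The only point requiring care is that $\alp_h(a)$ and $\alp_h(b)$ must lie in $B$ for the equivariance identities above to apply; but this is guaranteed because $H_LH_R$ normalizes $B$, as was recorded at the start of the proof of Theorem \ref{thm: conjugation is an automorphism}. With that in hand, all three relations are immediate, and I do not expect any genuine obstacle beyond this bookkeeping.
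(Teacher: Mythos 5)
Your proposal is correct and follows essentially the same route as the paper: item (1) from the commutativity of $H_L$ and $H_R$ together with $\alp_{gh}=\alp_g\circ\alp_h$, and items (2) and (3) from the equivariance identities $h_\ell(a,b)^h = h_\ell(a^h,b^h)$ and $h_r(a,b)^h = h_r(a^h,b^h)$ extracted from the proof of Theorem \ref{thm: conjugation is an automorphism}. Your explicit remark that $\alp_h(a),\alp_h(b)\in B$ because $H_LH_R$ normalizes $B$ is a point the paper leaves implicit, but there is no substantive difference.
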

\begin{proof}
Item \eqref{item: left and right gyration commute} follows from the
fact that $h_\ell h_r = h_rh_\ell$ for all $h_\ell\in H_L$ and
$h_r\in H_R$ and that $\alp_{gh} =\alp_g\circ\alp_h$ for all $g,
h\in \Gam$.

\par Let $h\in H_LH_R$ and let $a, b\in B$. As in the proof of Theorem
\ref{thm: conjugation is an automorphism}, $h_r(a, b)^h = h_r(a^h,
b^h)$. Hence, $\alp_h\circ\lgyr{a, b}{}\circ\alp_h^{-1} = \lgyr{a^h,
b^h}{}$ and Item \eqref{item: commuting relation of conjugation and
left gyration} follows. Similarly, $h_\ell(a, b)^h = h_\ell(a^h,
b^h)$ implies Item \eqref{item: commuting relation of conjugation
and right gyration}.
\end{proof}

\par As a consequence of Theorem \ref{thm: commuting relation conjugation and gyration
automorphism}, left gyrations are invariant under right gyrations,
and vice versa. In fact, we have the following two theorems.

\begin{thm}\label{thm: invariant of bigyration}
Let $B$ be a bi-gyrotransversal of subgroups $H_L$ and $H_R$ in a
group $\Gam$. If $\rho$ is a finite composition of right gyrations
of $B$, then
\begin{equation}\label{eqn: invariant of left gyration under right}
\lgyr{a, b}{} = \lgyr{\rho(a), \rho(b)}{}
\end{equation}
for all $a, b\in B$. If $\lamb$ is a finite composition of left
gyrations of $B$, then
\begin{equation}\label{eqn: invariant of right gyration under left}
\rgyr{a, b}{} = \rgyr{\lamb(a), \lamb(b)}{}
\end{equation}
for all $a, b\in B$.
\end{thm}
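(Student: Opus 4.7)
The plan is to prove both identities by induction on the number of gyrations in the composition, with the base case driven by the commuting relations already established in Theorem \ref{thm: commuting relation conjugation and gyration automorphism}.

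For identity \eqref{eqn: invariant of left gyration under right}, I begin with the base case $\rho = \rgyr{c,d}{}$. By \eqref{eqn: bigyration in conjugate form}, $\rho = \alpha_{h_\ell(c,d)^{-1}}$ is conjugation by an element of $H_L \subseteq H_LH_R$, so Item \eqref{item: commuting relation of conjugation and left gyration} of Theorem \ref{thm: commuting relation conjugation and gyration automorphism} gives
\[
\lgyr{\rho(a),\rho(b)}{} = \rho \circ \lgyr{a,b}{} \circ \rho^{-1}.
\]
To collapse this conjugation to $\lgyr{a,b}{}$, I invoke Item \eqref{item: left and right gyration commute} of the same theorem, which says that the right gyration $\rho$ commutes with the left gyration $\lgyr{a,b}{}$. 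The inductive step is then routine: writing a composition of $n+1$ right gyrations as $\rho = \rho_1 \circ \rho'$ with $\rho'$ a composition of $n$ such gyrations, I apply the inductive hypothesis to $\rho'$ to replace $\lgyr{a,b}{}$ by $\lgyr{\rho'(a),\rho'(b)}{}$, and then apply the base case to $\rho_1$ at the pair $(\rho'(a),\rho'(b))$.

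Identity \eqref{eqn: invariant of right gyration under left} follows by a fully symmetric argument: a single left gyration $\lgyr{c,d}{} = \alpha_{h_r(c,d)}$ is conjugation by an element of $H_R \subseteq H_LH_R$, and Items \eqref{item: left and right gyration commute} and \eqref{item: commuting relation of conjugation and right gyration} play the roles of their counterparts above. I do not anticipate a real obstacle; the conceptual content is simply that left and right gyrations are conjugations by elements of \emph{opposite} subgroups ($H_R$ and $H_L$ respectively), so the elementwise commutation $h_\ell h_r = h_r h_\ell$ built into Definition \ref{def: Bigyrotrasversal} is precisely what makes one type of gyration invisible to the other.
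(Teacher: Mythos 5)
Your proof is correct and follows essentially the same route as the paper: both rest on the commuting relations of Theorem \ref{thm: commuting relation conjugation and gyration automorphism}, with Item \eqref{item: commuting relation of conjugation and left gyration} (resp.\ \eqref{item: commuting relation of conjugation and right gyration}) transporting the gyration and Item \eqref{item: left and right gyration commute} collapsing the resulting conjugation. The only cosmetic difference is that the paper writes the whole composition $\rho$ as a single conjugation $\alp_h$ with $h\in H_L$ and applies the theorem once, whereas you induct gyration by gyration.
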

\begin{proof}
By assumption, $\rho = \rgyr{a_1, b_1}{}\circ\rgyr{a_2,
b_2}{}\circ\cdots\circ\rgyr{a_n, b_n}{}$ for some $a_i, b_i\in B$.
Since $\rgyr{a_i, b_i}{} = \alp_{h_\ell(a_i, b_i)^{-1}}$ for all
$i$, it follows that $\rho = \alp_h$, where $h =
h_\ell(a_1,b_1)^{-1}h_\ell(a_2,b_2)^{-1}\cdots h_\ell(a_n,
b_n)^{-1}$. As $\rho = \alp_h$ and $h\in H_L$, Theorem \ref{thm:
commuting relation conjugation and gyration automorphism}
\eqref{item: commuting relation of conjugation and left gyration}
implies $\rho\circ\lgyr{a, b}{} = \lgyr{\rho(a),
\rho(b)}{}\circ\rho$. Since $\rho$ and $\lgyr{a, b}{}$ commute, we
have \eqref{eqn: invariant of left gyration under right}. One
obtains similarly that $\lamb = \alp_h$ for some $h\in H_R$, which
implies \eqref{eqn: invariant of right gyration under left} by
Theorem \ref{thm: commuting relation conjugation and gyration
automorphism} \eqref{item: commuting relation of conjugation and
right gyration}.
\end{proof}

\begin{thm}\label{thm: commuting relation of bigyration}
Let $B$ be a bi-gyrotransversal of subgroups $H_L$ and $H_R$ in a
group $\Gam$. If $\rho$ is a finite composition of right gyrations
of $B$, then
\begin{equation}\label{eqn: commuting relation of rho and right gyration}
\rho\circ\rgyr{a, b}{} = \rgyr{\rho(a), \rho(b)}{}\circ\rho
\end{equation}
for all $a, b\in B$. If $\lamb$ is a finite composition of left
gyrations of $B$, then
\begin{equation}\label{eqn: commuting relation of lambda and left gyration}
\lamb\circ\lgyr{a, b}{} = \lgyr{\lamb(a), \lamb(b)}{}\circ\lamb
\end{equation}
for all $a, b\in B$.
\end{thm}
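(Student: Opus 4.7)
The plan is to argue exactly as in the proof of Theorem \ref{thm: invariant of bigyration}, but to invoke the ``matching'' clauses of Theorem \ref{thm: commuting relation conjugation and gyration automorphism} instead of the ``mixed'' ones. The key observation is already isolated there: since every right gyration has the conjugation form $\rgyr{a_i, b_i}{} = \alp_{h_\ell(a_i, b_i)^{-1}}$ with $h_\ell(a_i, b_i)^{-1} \in H_L$, a finite composition $\rho = \rgyr{a_1, b_1}{}\circ\cdots\circ\rgyr{a_n, b_n}{}$ collapses to a single conjugation $\rho = \alp_h$ with
$$h = h_\ell(a_1, b_1)^{-1}h_\ell(a_2, b_2)^{-1}\cdots h_\ell(a_n, b_n)^{-1}\in H_L,$$
using the multiplicativity $\alp_{gh} = \alp_g\circ\alp_h$. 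Symmetrically, any finite composition $\lamb$ of left gyrations is of the form $\lamb = \alp_{h'}$ for some $h'\in H_R$.

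First I would record the two representations $\rho = \alp_h$ and $\lamb = \alp_{h'}$ just described. Then, for \eqref{eqn: commuting relation of rho and right gyration}, I would apply Theorem \ref{thm: commuting relation conjugation and gyration automorphism}\eqref{item: commuting relation of conjugation and right gyration} directly with this $h\in H_L\subseteq H_LH_R$, obtaining
$$\rho\circ\rgyr{a, b}{} = \alp_h\circ\rgyr{a, b}{} = \rgyr{\alp_h(a), \alp_h(b)}{}\circ\alp_h = \rgyr{\rho(a), \rho(b)}{}\circ\rho,$$
which is the desired identity. For \eqref{eqn: commuting relation of lambda and left gyration}, the analogous appeal to Theorem \ref{thm: commuting relation conjugation and gyration automorphism}\eqref{item: commuting relation of conjugation and left gyration} with $h'\in H_R\subseteq H_LH_R$ yields
$$\lamb\circ\lgyr{a, b}{} = \alp_{h'}\circ\lgyr{a, b}{} = \lgyr{\alp_{h'}(a), \alp_{h'}(b)}{}\circ\alp_{h'} = \lgyr{\lamb(a), \lamb(b)}{}\circ\lamb.$$

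There is no real obstacle here: the heavy lifting was already done in Theorem \ref{thm: commuting relation conjugation and gyration automorphism}, where the conjugate identities $h_\ell(a, b)^h = h_\ell(a^h, b^h)$ and $h_r(a, b)^h = h_r(a^h, b^h)$ were extracted from uniqueness of the bi-transversal decomposition. The only subtlety to watch is that in Theorem \ref{thm: invariant of bigyration} one needed $\rho$ and $\lgyr{a,b}{}$ to commute to cancel, which was available because $h\in H_L$, $h_r\in H_R$, and $H_L$, $H_R$ commute elementwise; for the present theorem no such cancellation is required, so the proof is actually shorter than that of Theorem \ref{thm: invariant of bigyration}.
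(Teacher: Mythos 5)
Your proposal is correct and follows essentially the same route as the paper: both write $\rho = \alp_h$ with $h\in H_L$ (respectively $\lamb = \alp_{h'}$ with $h'\in H_R$) via $\alp_{gh}=\alp_g\circ\alp_h$, and then invoke the matching clauses \eqref{item: commuting relation of conjugation and right gyration} and \eqref{item: commuting relation of conjugation and left gyration} of Theorem \ref{thm: commuting relation conjugation and gyration automorphism}. Your closing remark that no commutation of left and right gyrations is needed here, unlike in Theorem \ref{thm: invariant of bigyration}, is also accurate.
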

\begin{proof}
As in the proof of Theorem \ref{thm: invariant of bigyration}, $\rho
= \alp_h$ for some $h\in H_L$. Hence, \eqref{eqn: commuting relation
of rho and right gyration} is an application of Theorem \ref{thm:
commuting relation conjugation and gyration automorphism}
\eqref{item: commuting relation of conjugation and right gyration}.
Similarly, \eqref{eqn: commuting relation of lambda and left
gyration} is an application of Theorem \ref{thm: commuting relation
conjugation and gyration automorphism} \eqref{item: commuting
relation of conjugation and left gyration}.
\end{proof}

\par The associativity of $\Gam$ is reflected in
its bi-gyrotransversal decomposition $\Gam = H_LBH_R$, as shown in
the following theorem.

\begin{thm}\label{thm: bigyroassociative law}
Let $B$ be a bi-gyrotransversal of subgroups $H_L$ and $H_R$ in a
group $\Gam$. For all $a, b, c\in B$,
$$(a\odot b)\odot\lgyr{a, b}{c} = \rgyr{b, c}{a}\odot(b\odot c).$$
\end{thm}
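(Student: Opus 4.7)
The plan is to compute the triple product $abc \in \Gamma$ in two different ways using the associativity of $\Gamma$, and then invoke the uniqueness of the bi-transversal decomposition to read off the identity on the $B$-component.

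First, I would expand $(ab)c$. Using \eqref{eqn: Unique decomposition}, write $ab = h_\ell(a,b)(a\odot b)h_r(a,b)$, so
\[
(ab)c = h_\ell(a,b)(a\odot b)h_r(a,b)c.
\]
Now I push $h_r(a,b)$ past $c$ on the right by inserting $h_r(a,b)^{-1}h_r(a,b)$, which produces $h_r(a,b)ch_r(a,b)^{-1} = \lgyr{a,b}{c}$ by Definition~\ref{def: Bigyration}. Note that $\lgyr{a,b}{c} \in B$ because $H_R$ normalizes $B$ (equivalently, by Corollary~\ref{cor: bigyration is automorphism}). Then I apply \eqref{eqn: Unique decomposition} to the product $(a\odot b)\lgyr{a,b}{c}$ of two elements of $B$, obtaining
\[
(ab)c = h_\ell(a,b)\,h_\ell(a\odot b,\lgyr{a,b}{c})\,\bigl[(a\odot b)\odot\lgyr{a,b}{c}\bigr]\,h_r(a\odot b,\lgyr{a,b}{c})\,h_r(a,b).
\]
Because $H_L$ and $H_R$ are subgroups, the leading and trailing factors lie in $H_L$ and $H_R$, respectively, so this displays $(ab)c$ in the canonical $H_L\,B\,H_R$ form.

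Next, I would do the symmetric computation for $a(bc)$: expand $bc = h_\ell(b,c)(b\odot c)h_r(b,c)$, and push $h_\ell(b,c)$ past $a$ on the left. Inserting $h_\ell(b,c)h_\ell(b,c)^{-1}$ yields $h_\ell(b,c)^{-1}ah_\ell(b,c) = \rgyr{b,c}{a}$, which again lies in $B$ by the normalization of $B$ by $H_L$. Applying \eqref{eqn: Unique decomposition} to the product $\rgyr{b,c}{a}(b\odot c)$ then gives
\[
a(bc) = h_\ell(b,c)\,h_\ell(\rgyr{b,c}{a},b\odot c)\,\bigl[\rgyr{b,c}{a}\odot(b\odot c)\bigr]\,h_r(\rgyr{b,c}{a},b\odot c)\,h_r(b,c).
\]

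Finally, I invoke $(ab)c = a(bc)$ in $\Gamma$ and the uniqueness clause in Definition~\ref{def: Bitrasversal}: the two right-hand sides are canonical factorizations in $H_L\,B\,H_R$ of the same element of $\Gamma$, so their middle $B$-components coincide, yielding the bi-gyroassociative law. The one subtlety to be careful about is that $\lgyr{a,b}{c}$ and $\rgyr{b,c}{a}$ must genuinely be elements of $B$ so that \eqref{eqn: Unique decomposition} can be applied in the second step of each computation; this is exactly where the normalization hypothesis in Definition~\ref{def: Bigyrotrasversal} is used, already packaged as Corollary~\ref{cor: bigyration is automorphism}. No use of commutation between $H_L$ and $H_R$ is needed for this argument.
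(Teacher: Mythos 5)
Your proposal is correct and follows essentially the same route as the paper's proof: both compute $a(bc)$ and $(ab)c$ by inserting $h_\ell(b,c)^{-1}h_\ell(b,c)$ (resp.\ $h_r(a,b)^{-1}h_r(a,b)$) to produce the conjugates $\rgyr{b,c}{a}$ and $\lgyr{a,b}{c}$, re-decompose, and invoke uniqueness of the $H_L B H_R$ factorization. Your remarks that normalization is exactly what puts the gyration images back in $B$, and that the commutation of $H_L$ with $H_R$ is not needed here, are both accurate.
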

\begin{proof}
Let $a, b, c\in B$. Set $a_r = \rgyr{b,c}{a}$ and $c_l = \lgyr{a,
b}{c}$. Then $a_r\in B$ and $c_l\in B$. By employing \eqref{eqn:
Unique decomposition},
\begin{align}
a(bc) &= a(h_\ell(b, c)(b\odot c)h_r(b, c))\notag\\
{} &= h_\ell(b, c)(h_\ell(b, c)^{-1}ah_\ell(b, c))(b\odot c)h_r(b,
c)\notag\\
{} &= h_\ell(b, c)a_r(b\odot c)h_r(b, c)\notag\\
{} &= [h_\ell(b, c)h_\ell(a_r, b\odot c)][a_r\odot(b\odot
c)][h_r(a_r, b\odot c)h_r(b, c)]\notag
\end{align}
and, similarly, $(ab)c = [h_\ell(a, b)h_\ell(a\odot b, c_l)][(a\odot
b)\odot c_l][h_r(a\odot b, c_l)h_r(a, b)]$. Since $a(bc) = (ab)c$,
it follows that $(a\odot b)\odot c_l = a_r\odot(b\odot c)$, which
was to be proved.
\end{proof}

\begin{prop}\label{prop: relation of bigyrarion I}
Let $B$ be a bi-gyrotransversal of subgroups $H_L$ and $H_R$ in a
group $\Gam$. For all $a, b, c\in B$,
\begin{enumerate}
    \item\label{item: relation of right gyration I} $\rgyr{\rgyr{b, c}{a}, b\odot c}{}\circ\rgyr{b, c}{} = \rgyr{a\odot b, \lgyr{a, b}{c}}{}\circ\rgyr{a,
    b}{}$, and
    \item\label{item: relation of left gyration I} $\lgyr{a\odot b, \lgyr{a, b}{c}}{}\circ\lgyr{a, b}{} = \lgyr{\rgyr{b, c}{a}, b\odot c}{}\circ\lgyr{b,
    c}{}$.
\end{enumerate}
\end{prop}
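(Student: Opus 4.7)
The plan is to re-examine the computation already performed in the proof of Theorem \ref{thm: bigyroassociative law}. There, the identity $a(bc)=(ab)c$ in $\Gam$ was expanded, with $a_r=\rgyr{b,c}{a}$ and $c_l=\lgyr{a,b}{c}$, into the two bi-transversal forms
$$a(bc) = \bigl[h_\ell(b,c)\,h_\ell(a_r, b\odot c)\bigr]\bigl[a_r\odot(b\odot c)\bigr]\bigl[h_r(a_r, b\odot c)\,h_r(b,c)\bigr],$$
$$(ab)c = \bigl[h_\ell(a,b)\,h_\ell(a\odot b, c_l)\bigr]\bigl[(a\odot b)\odot c_l\bigr]\bigl[h_r(a\odot b, c_l)\,h_r(a,b)\bigr].$$
Since $H_L$ and $H_R$ are subgroups, the outer factors on each side lie in $H_L$ and $H_R$ respectively, so the uniqueness clause of Definition \ref{def: Bitrasversal} forces all three components to match. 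The middle ones yielded the bi-gyroassociative law already; I would now exploit the two outer ones, which give the scalar identities
$$h_\ell(b,c)\,h_\ell(a_r, b\odot c) \;=\; h_\ell(a,b)\,h_\ell(a\odot b, c_l),$$
$$h_r(a_r, b\odot c)\,h_r(b,c) \;=\; h_r(a\odot b, c_l)\,h_r(a,b).$$

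The remainder is pure translation. Using $\rgyr{x,y}{}=\alp_{h_\ell(x,y)^{-1}}$ and $\lgyr{x,y}{}=\alp_{h_r(x,y)}$ together with the homomorphism property $\alp_{gh}=\alp_g\circ\alp_h$, I would invert both sides of the $h_\ell$-identity and apply $\alp$ to obtain
$$\alp_{h_\ell(a_r,b\odot c)^{-1}}\circ\alp_{h_\ell(b,c)^{-1}} \;=\; \alp_{h_\ell(a\odot b, c_l)^{-1}}\circ\alp_{h_\ell(a,b)^{-1}},$$
which is exactly statement (1) of the proposition. Applying $\alp$ directly (without inversion) to the $h_r$-identity yields statement (2) by the same rewriting.

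The only real obstacle is notational: one must keep straight that composition of conjugations $\alp_g\circ\alp_h$ corresponds to the group product $gh$ in the same order, so that inverting on the $H_L$-side reverses that order -- which is precisely why statement (1) needs the $h_\ell$-identity to be inverted, while statement (2) uses the $h_r$-identity as it stands. Once this bookkeeping is in place, the proposition follows immediately from the uniqueness of bi-transversal decompositions, with no further structural input needed (in particular, the normalization hypothesis and the commutation of $H_L$ with $H_R$ are not required here, only that the outer factors land in the appropriate subgroup).
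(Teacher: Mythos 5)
Your proposal is correct and follows essentially the same route as the paper: both extract the two outer-factor identities $h_\ell(b,c)h_\ell(\rgyr{b,c}{a}, b\odot c) = h_\ell(a,b)h_\ell(a\odot b, \lgyr{a,b}{c})$ and $h_r(\rgyr{b,c}{a}, b\odot c)h_r(b,c) = h_r(a\odot b, \lgyr{a,b}{c})h_r(a,b)$ from the uniqueness of the bi-transversal decomposition of $a(bc)=(ab)c$ computed in the proof of Theorem \ref{thm: bigyroassociative law}, and then translate them into gyrations via $\rgyr{x,y}{}=\alp_{h_\ell(x,y)^{-1}}$ and $\lgyr{x,y}{}=\alp_{h_r(x,y)}$. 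One minor correction to your closing remark: the normalization of $B$ by $H_L$ and $H_R$ is in fact used implicitly, since it is what guarantees that $\rgyr{b,c}{a}$ and $\lgyr{a,b}{c}$ lie in $B$, without which the uniqueness clause could not be invoked.
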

\begin{proof}
As we have computed in the proof of Theorem \ref{thm:
bigyroassociative law},
$$h_\ell(b, c)h_\ell(a_r, b\odot c) = h_\ell(a, b)h_\ell(a\odot b, c_l),$$
where $a_r = \rgyr{b, c}{a}$ and $c_l = \lgyr{a, b}{c}$. Thus, Item
\eqref{item: relation of right gyration I} is obtained. Similarly,
$h_r(a_r, b\odot c)h_r(b, c) = h_r(a\odot b, c_l)h_r(a, b)$ gives
Item \eqref{item: relation of left gyration I}.
\end{proof}

\subsection*{Twisted subgroups}\label{subsec: twisted subgroup}
\par Twisted subgroups abound in group theory, gyrogroup theory, and loop
theory, as evidenced, for instance, from \cite{MA1998NSF, TF2003SNS,
TFAU2000IDG, RLAY2013TRG, TFMKJP2006OTS, MA2005OBL, MAMKJP2005FBL}.
Here, we demonstrate that a bi-gyrotransversal decomposition $\Gam =
H_LBH_R$ in which $B$ is a twisted subgroup gives rise to a highly
structured type of bi-gyrogroupoids and, eventually, a bi-gyrogroup.
We follow Aschbacher for the definition of a twisted subgroup.

\begin{defn}[Twisted subgroup]\label{def: twisted subgroup}
A subset $B$ of a group $\Gam$ is a \textit{twisted subgroup} of
$\Gam$ if the following conditions hold:
\begin{enumerate}
    \item $1\in B$, $1$ being the identity of $\Gam$;
    \item if $b\in B$, then $b^{-1}\in B$;
    \item if $a, b\in B$, then $aba\in B$.
\end{enumerate}
\end{defn}

\newpage

\begin{thm}\label{thm: Properties of bigyrotransversal and twisted subgroup I}
Let $B$ be a bi-gyrotransversal of subgroups $H_L$ and $H_R$ in a
group $\Gam$. If $B$ is a twisted subgroup of $\Gam$, then the
following properties are true for all $a, b\in B$.
\begin{enumerate}
    \item\label{item: identity in bigyrotransversal} $1\odot b = b\odot 1 = b$.
    \item\label{item: inverse in bigyrotransversal} $b^{-1}\in B$ and $b^{-1}\odot b = b\odot b^{-1} = 1$.
    \item\label{item: bigyration of 1 and a} $\lgyr{1, b}{} = \lgyr{b, 1}{} = \rgyr{1, b}{} = \rgyr{b, 1}{} =
    \id{B}$.
    \item\label{item: left and right gyration of b and b inverse}
    $\lgyr{b^{-1}, b}{} = \lgyr{b, b^{-1}}{} = \rgyr{b^{-1}, b}{} = \rgyr{b, b^{-1}}{} =
    \id{B}$.
    \item\label{item: inverse of left and right bigyration} $\ilgyr{a, b}{} = \lgyr{b^{-1}, a^{-1}}{}$ and $\irgyr{a, b}{} = \rgyr{b^{-1},
    a^{-1}}{}$.
    \item\label{item: (a . b)^-1} $(a\odot b)^{-1} = (\lgyr{a, b}{}\circ\rgyr{a, b}{})(b^{-1}\odot
    a^{-1})$.
\end{enumerate}
\end{thm}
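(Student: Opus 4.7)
The unifying principle throughout is the uniqueness of the bi-transversal decomposition together with the two structural features of a bi-gyrotransversal: $H_L$ and $H_R$ both normalize $B$, and they commute elementwise.

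Items \eqref{item: identity in bigyrotransversal}--\eqref{item: left and right gyration of b and b inverse} should fall out for free from two trivial canonical decompositions. Since $1 \in B$ by the twisted subgroup hypothesis, the equalities $1 \cdot b = 1 \cdot b \cdot 1$ and $b \cdot 1 = 1 \cdot b \cdot 1$ are already in canonical $H_L \cdot B \cdot H_R$ form, so uniqueness gives $1 \odot b = b \odot 1 = b$ together with $h_\ell(1,b) = h_r(1,b) = h_\ell(b,1) = h_r(b,1) = 1$. Since $b^{-1} \in B$ as well, the same argument applied to $b^{-1} \cdot b = 1$ and $b \cdot b^{-1} = 1$ (both matching the canonical decomposition $1 \cdot 1 \cdot 1$) yields $b^{-1} \odot b = b \odot b^{-1} = 1$ with all four of $h_\ell(b^{-1},b), h_r(b^{-1},b), h_\ell(b,b^{-1}), h_r(b,b^{-1})$ equal to $1$. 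Items \eqref{item: bigyration of 1 and a} and \eqref{item: left and right gyration of b and b inverse} then read off immediately from $\lgyr{a,b}{} = \alp_{h_r(a,b)}$ and $\rgyr{a,b}{} = \alp_{h_\ell(a,b)^{-1}}$.

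For items \eqref{item: inverse of left and right bigyration} and \eqref{item: (a . b)^-1}, my plan is to compute $(ab)^{-1} = b^{-1} a^{-1}$ in two ways. Inverting the canonical decomposition $ab = h_\ell(a,b)(a\odot b)h_r(a,b)$ directly gives $b^{-1} a^{-1} = h_r(a,b)^{-1}(a\odot b)^{-1}h_\ell(a,b)^{-1}$, which is \emph{not} yet in $H_L \cdot B \cdot H_R$ form. I would bring it into canonical form by three successive moves: first, push $h_r(a,b)^{-1}$ through $(a\odot b)^{-1}$ to its right using that $H_R$ normalizes $B$; second, swap the resulting adjacent $H_R$- and $H_L$-factors using the commuting property of $H_L$ and $H_R$; third, push $h_\ell(a,b)^{-1}$ through the central $B$-element to the left using that $H_L$ normalizes $B$. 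Comparing the outcome with the genuine decomposition $b^{-1} a^{-1} = h_\ell(b^{-1}, a^{-1})(b^{-1} \odot a^{-1})h_r(b^{-1}, a^{-1})$ and invoking uniqueness delivers $h_\ell(b^{-1}, a^{-1}) = h_\ell(a,b)^{-1}$, $h_r(b^{-1}, a^{-1}) = h_r(a,b)^{-1}$, and an explicit expression for $b^{-1} \odot a^{-1}$ as a double conjugate of $(a\odot b)^{-1}$ by $h_\ell(a,b)$ and $h_r(a,b)^{-1}$. Item \eqref{item: inverse of left and right bigyration} is then immediate from the conjugation formulas \eqref{eqn: bigyration in conjugate form} and the identity $\alp_{h^{-1}} = \alp_h^{-1}$, while item \eqref{item: (a . b)^-1} follows by rewriting the double conjugate as $(\lgyr{a,b}{} \circ \rgyr{a,b}{})^{-1}((a\odot b)^{-1})$ and solving for $(a\odot b)^{-1}$; reordering the composition uses the commutativity of $\lgyr{a,b}{}$ and $\rgyr{a,b}{}$ supplied by Theorem~\ref{thm: commuting relation conjugation and gyration automorphism}\eqref{item: left and right gyration commute}.

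The main obstacle is the ordering discipline in the three-step conjugation argument for \eqref{item: inverse of left and right bigyration}--\eqref{item: (a . b)^-1}: at each step one must push a single factor past exactly one kind of neighbour (either a central $B$-element, or a factor from the other transversal subgroup), using exactly the right structural property (normalization by $H_R$, the commuting relation, or normalization by $H_L$), and in exactly the right sequence. Once the canonical form is reached, uniqueness and the bookkeeping with $\alp_{h^{-1}} = \alp_h^{-1}$ finish everything routinely.
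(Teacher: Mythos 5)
Your proposal is correct and follows essentially the same route as the paper: items (1)--(4) are read off from the uniqueness of the trivial decompositions $b = 1\cdot b\cdot 1$ and $1 = 1\cdot 1\cdot 1$, and items (5)--(6) come from comparing the two decompositions of $(ab)^{-1} = b^{-1}a^{-1}$ after using normalization and the commuting of $H_L$ with $H_R$ to reach canonical $H_L B H_R$ form. The only cosmetic difference is that the paper solves for $(a\odot b)^{-1}$ and normalizes that expression, whereas you normalize $h_r(a,b)^{-1}(a\odot b)^{-1}h_\ell(a,b)^{-1}$ directly --- the same computation read in the opposite direction.
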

\begin{proof}
\eqref{item: identity in bigyrotransversal} As $b = 1b = h_\ell(1,
b)(1\odot b)h_r(1, b)$, we have $h_\ell(1, b) = 1$, $1\odot b = b$,
and $h_r(1, b) = 1$. Similarly, $b = b1$ implies $b\odot 1 = b$.

\par \eqref{item: inverse in bigyrotransversal} Let $b\in B$. Since $B$ is a
twisted subgroup, $b^{-1}\in B$. Further,
$$1 = b^{-1}b = h_\ell(b^{-1}, b)(b^{-1}\odot b)h_r(b^{-1}, b)$$
implies $h_\ell(b^{-1}, b) = 1$, $b^{-1}\odot b = 1$, and
$h_r(b^{-1}, b) = 1$. Similarly, $bb^{-1} = 1$ implies $b\odot
b^{-1} = 1$.

\par \eqref{item: bigyration of 1 and a} We have $h_\ell(1, b) = h_\ell(b, 1) = h_r(1, b)
= h_r(b, 1) = 1$, as computed in Item \eqref{item: identity in
bigyrotransversal}. Hence, Item \eqref{item: bigyration of 1 and a}
follows.

\par \eqref{item: left and right gyration of b and b inverse} We have $h_\ell(b^{-1}, b) = h_\ell(b, b^{-1}) =
h_r(b^{-1}, b) = h_r(b, b^{-1}) = 1$, as computed in Item
\eqref{item: inverse in bigyrotransversal}. Hence, Item \eqref{item:
left and right gyration of b and b inverse} follows.

\par \eqref{item: inverse of left and right bigyration} Let $a, b\in
B$. Then $a^{-1}, b^{-1}\in B$. On the one hand, we have
$$(ab)^{-1} = (h_\ell(a, b)(a\odot b)h_r(a, b))^{-1} = h_r(a, b)^{-1}(a\odot b)^{-1}h_\ell(a, b)^{-1},$$
and on the other hand we have $b^{-1}a^{-1} = h_\ell(b^{-1},
a^{-1})(b^{-1}\odot a^{-1})h_r(b^{-1}, a^{-1})$. Since $(ab)^{-1} =
b^{-1}a^{-1}$, it follows that
\begin{eqnarray}\label{eqn: (a . b)^-1}
\begin{split}
(a\odot b)^{-1} &= h_r(a, b)h_\ell(b^{-1}, a^{-1})(b^{-1}\odot
a^{-1})h_r(b^{-1}, a^{-1})h_\ell(a, b)\\
{} &= h_\ell(b^{-1}, a^{-1})h_r(a, b)(b^{-1}\odot a^{-1})h_\ell(a,
b)h_r(b^{-1}, a^{-1})\\
{} &= h_\ell(b^{-1}, a^{-1})h_\ell(a, b)\tilde{b}h_r(a,
b)h_r(b^{-1}, a^{-1}),
\end{split}
\end{eqnarray}
where $\tilde{b} = \lgyr{a, b}{(\rgyr{a, b}{(b^{-1}\odot
a^{-1})})}$. Because $(a\odot b)^{-1}$ and $\tilde{b}$ belong to
$B$, we have from the extreme sides of \eqref{eqn: (a . b)^-1} that
$$h_r(a,
b)h_r(b^{-1}, a^{-1}) = 1\quad\textrm{and}\quad h_\ell(b^{-1},
a^{-1})h_\ell(a, b) = 1.$$ Hence, $h_r(a, b)^{-1} = h_r(b^{-1},
a^{-1})$, which implies $\ilgyr{a, b}{} = \lgyr{b^{-1}, a^{-1}}{}$.
\mbox{Similarly}, $h_\ell(a, b) = h_\ell(b^{-1}, a^{-1})^{-1}$
implies $\irgyr{a, b}{} = \rgyr{b^{-1}, a^{-1}}{}$.

\par \eqref{item: (a . b)^-1} As in Item \eqref{item: inverse of left and right
bigyration}, $(a\odot b)^{-1} = \tilde{b} = \lgyr{a, b}{(\rgyr{a,
b}{(b^{-1}\odot a^{-1})})}$.
\end{proof}

\begin{rem} Note that we do not invoke the third defining property of a
twisted subgroup in proving Theorem \ref{thm: Properties of
bigyrotransversal and twisted subgroup I}.
\end{rem}

\par At this point, we have shown that any bi-gyrotransversal
decomposition $\Gam = H_LBH_R$ in which $B$ is a twisted subgroup of
$\Gam$ gives the bi-transversal groupoid $B$ that satisfies all the
axioms of a bi-gyrogroupoid except for (BG4). In order to complete
this, we have to impose additional conditions on the left and right
transversal maps, as the following lemma indicates.

\begin{lem}\label{lem: h(a, b)^{-1} = h (b, a) and inverse bigyration}
If $B$ is a bi-transversal of subgroups $H_L$ and $H_R$ in a group
$\Gam$ such that $h_\ell(a, b)^{-1} = h_\ell(b, a)$ and $h_r(a,
b)^{-1} = h_r(b, a)$ for all $a, b\in B$, then
$$\ilgyr{a, b}{} = \lgyr{b, a}{}\quad\textrm{and}\quad\irgyr{a, b}{} = \rgyr{b, a}{}$$
for all $a, b\in B$.
\end{lem}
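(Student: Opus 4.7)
The plan is to read off the inversions directly from the conjugation formulas in equation (3.3), namely $\lgyr{a,b}{} = \alp_{h_r(a,b)}$ and $\rgyr{a,b}{} = \alp_{h_\ell(a,b)^{-1}}$. Since conjugation satisfies $\alp_{gh} = \alp_g\circ\alp_h$ for all $g,h\in\Gam$, it follows immediately that the inverse of a conjugation automorphism is the conjugation by the inverse element: $\alp_h^{-1} = \alp_{h^{-1}}$. This reduces the entire lemma to a one-line manipulation of subscripts once the hypothesis $h_\ell(a,b)^{-1} = h_\ell(b,a)$ and $h_r(a,b)^{-1} = h_r(b,a)$ is invoked.

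Concretely, I would first compute
\[
\ilgyr{a,b}{} = \alp_{h_r(a,b)}^{-1} = \alp_{h_r(a,b)^{-1}},
\]
then substitute $h_r(a,b)^{-1} = h_r(b,a)$ to obtain $\alp_{h_r(b,a)} = \lgyr{b,a}{}$, which is the first desired identity. For the right gyration the argument is parallel: starting from $\rgyr{a,b}{} = \alp_{h_\ell(a,b)^{-1}}$, take inverses to get $\irgyr{a,b}{} = \alp_{h_\ell(a,b)}$; then write $h_\ell(a,b) = (h_\ell(a,b)^{-1})^{-1} = h_\ell(b,a)^{-1}$ using the hypothesis, so that $\irgyr{a,b}{} = \alp_{h_\ell(b,a)^{-1}} = \rgyr{b,a}{}$.

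I do not anticipate a serious obstacle here; the content of the lemma is essentially that the symmetry hypothesis on the transversal maps is exactly the symmetry hypothesis on their induced conjugations. Notably, the proof requires neither the bi-gyrotransversal conditions (normalization of $B$ by $H_L,H_R$, nor the commutativity $h_\ell h_r = h_r h_\ell$) nor the twisted subgroup property — only the definition of $\lgyr{\cdot,\cdot}{}$ and $\rgyr{\cdot,\cdot}{}$ as conjugations and the stated hypothesis. The only minor care needed is to be explicit about which element serves as the conjugating element in each case (namely $h_r$ for left gyrations and $h_\ell^{-1}$ for right gyrations, as emphasized in the Remark following Definition 3.3), so that the signs of the inverses are tracked correctly.
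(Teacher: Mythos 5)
Your argument is correct and is essentially identical to the paper's proof: both reduce the lemma to the identity $\alp_h^{-1} = \alp_{h^{-1}}$ applied to the conjugation formulas $\lgyr{a,b}{} = \alp_{h_r(a,b)}$ and $\rgyr{a,b}{} = \alp_{h_\ell(a,b)^{-1}}$, followed by substituting the symmetry hypothesis on $h_\ell$ and $h_r$. Your observation that neither the bi-gyrotransversal conditions nor the twisted subgroup property is needed is also consistent with the paper, which states the lemma for a mere bi-transversal.
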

\begin{proof}
Note first that $\alp_h^{-1} = \alp_{h^{-1}}$ for all $h\in\Gam$.
From this we have $\lgyr{b, a}{} = \alp_{h_r(b, a)} = \alp_{h_r(a,
b)^{-1}} = \alp^{-1}_{h_r(a, b)} = \ilgyr{a, b}{}$. One can prove in
a similar way that $\irgyr{a, b}{} = \rgyr{b, a}{}$.
\end{proof}

\begin{thm}\label{thm: like left loop properties}
Let $B$ be a bi-gyrotransversal of subgroups $H_L$ and $H_R$ in a
group $\Gam$. If $B$ is a twisted subgroup of $\Gam$ such that
$h_\ell(a, b)^{-1} = h_\ell(b, a)$ and $h_r(a, b)^{-1} = h_r(b, a)$
for all $a, b\in B$, then the following relations hold for all $a,
b\in B$:
\begin{enumerate}
    \item\label{item: like left loop property, right gyration} $\rgyr{a, b}{} = \rgyr{\lgyr{a, b}{a}, a\odot b}{}$;
    \item\label{item: like left loop property, left gyration} $\lgyr{a, b}{} = \lgyr{\lgyr{a, b}{a}, a\odot b}{}$;
    \item\label{item: like right loop property, right gyration} $\rgyr{a, b}{} = \rgyr{\rgyr{b, a}{a}, b\odot a}{}$;
    \item\label{item: like right loop property, left gyration} $\lgyr{a, b}{} = \lgyr{\rgyr{b, a}{a}, b\odot a}{}$.
\end{enumerate}
\end{thm}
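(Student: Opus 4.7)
The plan is to prove items (3) and (4) first by invoking the one defining property of a twisted subgroup that the preceding Theorem~\ref{thm: Properties of bigyrotransversal and twisted subgroup I} explicitly did not use (namely $aba\in B$ whenever $a,b\in B$), and then to deduce (1) and (2) from (3) and (4) using Proposition~\ref{prop: relation of bigyrarion I} together with Lemma~\ref{lem: h(a, b)^{-1} = h (b, a) and inverse bigyration}. The Remark immediately following Theorem~\ref{thm: Properties of bigyrotransversal and twisted subgroup I} is a strong hint that the third twisted-subgroup axiom is what will carry the present theorem.

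The key step is to decompose the $\Gam$-product $a(ba)$ bi-transversally in two different ways. On one hand, associativity in $\Gam$ gives $a(ba)=aba$, and by the third twisted-subgroup axiom this element already sits in $B$, so its unique decomposition is simply $1\cdot aba\cdot 1$. On the other hand, expanding $ba=h_\ell(b,a)(b\odot a)h_r(b,a)$, pushing $h_\ell(b,a)$ past $a$ via the identity $a\,h_\ell(b,a)=h_\ell(b,a)\rgyr{b,a}{a}$ (which is just the defining conjugation formula for $\rgyr{b,a}{}$), and then bi-transversally decomposing $\rgyr{b,a}{a}\cdot(b\odot a)$, exposes the $H_L$- and $H_R$-factors of $a(ba)$ as $h_\ell(b,a)\,h_\ell(\rgyr{b,a}{a},b\odot a)$ and $h_r(\rgyr{b,a}{a},b\odot a)\,h_r(b,a)$. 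Uniqueness of the decomposition $\Gam=H_LBH_R$ forces both of these to equal $1$; combined with the hypothesis $h_\ell(a,b)^{-1}=h_\ell(b,a)$ and $h_r(a,b)^{-1}=h_r(b,a)$, this yields $h_\ell(\rgyr{b,a}{a},b\odot a)=h_\ell(a,b)$ and $h_r(\rgyr{b,a}{a},b\odot a)=h_r(a,b)$. Applying the defining formulas $\rgyr{x,y}{}=\alp_{h_\ell(x,y)^{-1}}$ and $\lgyr{x,y}{}=\alp_{h_r(x,y)}$ translates these two equalities directly into items (3) and (4).

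With (3) and (4) in hand, I would specialize Proposition~\ref{prop: relation of bigyrarion I} to $c=a$, whose first half reads
\begin{equation*}
\rgyr{\rgyr{b,a}{a},\,b\odot a}{}\circ\rgyr{b,a}{}=\rgyr{a\odot b,\,\lgyr{a,b}{a}}{}\circ\rgyr{a,b}{},
\end{equation*}
with an analogous statement for left gyrations in its second half. Substituting (3) and using $\rgyr{b,a}{}=\irgyr{a,b}{}$ from Lemma~\ref{lem: h(a, b)^{-1} = h (b, a) and inverse bigyration} collapses the left-hand side to the identity map, so $\rgyr{a\odot b,\lgyr{a,b}{a}}{}=\irgyr{a,b}{}$; inverting this via the Lemma-consequence $\irgyr{u,v}{}=\rgyr{v,u}{}$ converts it into $\rgyr{\lgyr{a,b}{a},a\odot b}{}=\rgyr{a,b}{}$, which is (1). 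A completely parallel argument from (4) and the left-gyration half of the Proposition yields (2).

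The only real obstacle is conceptual rather than technical: recognizing that the third twisted-subgroup axiom must be brought in. Once one hits on $a(ba)=aba\in B$ as the product to decompose, the outer $H_L$- and $H_R$-factors telescope to $1$ and all four identities fall out of uniqueness together with routine bookkeeping via Proposition~\ref{prop: relation of bigyrarion I} and the stated symmetry hypothesis.
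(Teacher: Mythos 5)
Your proof is correct, and its engine is the same as the paper's: decompose the triple product ($aba\in B$ by the third twisted-subgroup axiom) bi-transversally in two ways and read off from uniqueness that the outer $H_L$- and $H_R$-factors multiply to $1$; combined with $h_\ell(a,b)^{-1}=h_\ell(b,a)$ and $h_r(a,b)^{-1}=h_r(b,a)$ this gives two of the four identities directly. The only divergence is in the second half: the paper obtains items (1)--(2) by the symmetric direct computation of $(ab)a$ (and remarks that (3)--(4) follow from $a(ba)$), whereas you compute $a(ba)$ for (3)--(4) and then derive (1)--(2) by specializing Proposition \ref{prop: relation of bigyrarion I} to $c=a$ and inverting via Lemma \ref{lem: h(a, b)^{-1} = h (b, a) and inverse bigyration}. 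Your detour checks out (the left-hand side does collapse to $\id{B}$, forcing $\rgyr{a\odot b, \lgyr{a, b}{a}}{}=\irgyr{a, b}{}$ and hence (1)), and it buys a small economy --- one decomposition instead of two --- at the cost of leaning on the inversion hypothesis in one more place; the paper's version is more self-contained and symmetric.
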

\begin{proof}
Let $a, b\in B$. Set $a_l = \lgyr{a, b}{a}$. Employing \eqref{eqn:
Unique decomposition}, we obtain
\begin{eqnarray}\label{eqn: aba decomposition}
\begin{split}
(ab)a &= (h_\ell(a, b)(a\odot b)h_r(a, b))a\\
{} &= h_\ell(a, b)(a\odot b)a_lh_r(a, b)\\
{} &= [h_\ell(a, b)h_\ell(a\odot b, a_l)][(a\odot b)\odot
a_l][h_r(a\odot b, a_l)h_r(a, b)].
\end{split}
\end{eqnarray}
Since $(ab)a\in B$, the extreme sides of \eqref{eqn: aba
decomposition} imply
\begin{equation}\label{eqn: In proof of properties of bigyration}
h_\ell(a, b)h_\ell(a\odot b, a_l) = 1\quad\textrm{and}\quad
h_r(a\odot b, a_l)h_r(a, b) = 1.
\end{equation}
The first equation of \eqref{eqn: In proof of properties of
bigyration} implies $h_\ell(a\odot b, \lgyr{a, b}{a}) = h_\ell(a,
b)^{-1}$. Hence, $\irgyr{a\odot b, \lgyr{a, b}{a}}{} = \rgyr{a,
b}{}$. From Lemma \ref{lem: h(a, b)^{-1} = h (b, a) and inverse
bigyration}, we have $$\rgyr{a, b}{} = \rgyr{\lgyr{a, b}{a}, a\odot
b}{}.$$ The second equation of \eqref{eqn: In proof of properties of
bigyration} implies $h_r(a, b) = h_r(a\odot b, \lgyr{a,
b}{a})^{-1}$. Hence, $\lgyr{a, b}{} = \lgyr{\lgyr{a, b}{a}, a\odot
b}{}$. This proves Items \eqref{item: like left loop property, right
gyration} and \eqref{item: like left loop property, left gyration}.
Items \eqref{item: like right loop property, right gyration} and
\eqref{item: like right loop property, left gyration} can be proved
in a similar way by computing the \mbox{product} $a(ba)$.
\end{proof}

\begin{thm}\label{thm: even bigyration}
Let $B$ be a bi-gyrotransversal of subgroups $H_L$ and $H_R$ in a
group $\Gam$. If $B$ is a twisted subgroup of $\Gam$ such that
$h_\ell(a, b)^{-1} = h_\ell(b, a)$ and $h_r(a, b)^{-1} = h_r(b, a)$
for all $a, b\in B$, then left and right gyrations of $B$ are
\textit{even} in the sense that
$$\lgyr{a^{-1}, b^{-1}}{} = \lgyr{a, b}{}\quad\textrm{and}\quad\rgyr{a^{-1}, b^{-1}}{} = \rgyr{a, b}{}$$
for all $a, b\in B$.
\end{thm}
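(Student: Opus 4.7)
The plan is to combine two previously established identities. By Theorem \ref{thm: Properties of bigyrotransversal and twisted subgroup I}\eqref{item: inverse of left and right bigyration}, under the twisted subgroup hypothesis we have
$$\ilgyr{a, b}{} = \lgyr{b^{-1}, a^{-1}}{}\quad\textrm{and}\quad\irgyr{a, b}{} = \rgyr{b^{-1}, a^{-1}}{}$$
for all $a, b\in B$. On the other hand, the hypothesis $h_\ell(a,b)^{-1}=h_\ell(b,a)$ and $h_r(a,b)^{-1}=h_r(b,a)$ puts us squarely in the situation of Lemma \ref{lem: h(a, b)^{-1} = h (b, a) and inverse bigyration}, which yields
$$\ilgyr{a, b}{} = \lgyr{b, a}{}\quad\textrm{and}\quad\irgyr{a, b}{} = \rgyr{b, a}{}.$$

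First I would equate the two expressions for $\ilgyr{a, b}{}$ to obtain $\lgyr{b, a}{} = \lgyr{b^{-1}, a^{-1}}{}$, and then relabel by interchanging the roles of $a$ and $b$ to conclude $\lgyr{a, b}{} = \lgyr{a^{-1}, b^{-1}}{}$. The identical manipulation applied to the two formulas for $\irgyr{a, b}{}$ yields $\rgyr{a, b}{} = \rgyr{a^{-1}, b^{-1}}{}$. This completes the proof.

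There is essentially no obstacle beyond noting that both ingredients are already available: the twisted-subgroup structure is exactly what was used to establish the first identity in Theorem \ref{thm: Properties of bigyrotransversal and twisted subgroup I}\eqref{item: inverse of left and right bigyration}, and the symmetry hypothesis on the transversal maps is precisely the hypothesis of Lemma \ref{lem: h(a, b)^{-1} = h (b, a) and inverse bigyration}. The evenness property is then nothing more than the compatibility of these two expressions for the inverse bi-gyrations, followed by a cosmetic relabeling.
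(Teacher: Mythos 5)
Your proposal is correct and matches the paper's proof exactly: the paper likewise derives the evenness property by combining Theorem \ref{thm: Properties of bigyrotransversal and twisted subgroup I}\eqref{item: inverse of left and right bigyration} with Lemma \ref{lem: h(a, b)^{-1} = h (b, a) and inverse bigyration}. You have simply spelled out the relabeling step that the paper leaves implicit.
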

\begin{proof}
This theorem follows directly from Theorem \ref{thm: Properties of
bigyrotransversal and twisted subgroup I} \eqref{item: inverse of
left and right bigyration} and Lemma \ref{lem: h(a, b)^{-1} = h (b,
a) and inverse bigyration}.
\end{proof}

\section{Bi-gyrodecomposition and bi-gyrogroups}\label{sec: bigyrodecomposition}
\par Taking the key features of bi-gyrotransversal decomposition
of a group given in Section \ref{sec: bitransversal}, we formulate
the definition of bi-gyrodecomposition and show that any
bi-gyrodecomposition leads to a bi-gyrogroup, which in turn is a
gyrogroup. Most of the results in Section \ref{sec: bitransversal}
are directly translated into results in this section with
appropriate modifications.

\begin{defn}[Bi-gyrodecomposition]\label{def: Bi-gyrodecomposition}
Let $\Gam$ be a group, let $B$ be a subset of $\Gam$, and let $H_L$
and $H_R$ be subgroups of $\Gam$. A decomposition $\Gam = H_LBH_R$
is a \textit{bi-gyrodecomposition} if
\begin{enumerate}
    \item $B$ is a bi-gyrotransversal of $H_L$ and $H_R$ in $\Gam$;
    \item $B$ is a twisted subgroup of $\Gam$; and
    \item $h_\ell(a, b)^{-1} = h_\ell(b, a)$ and $h_r(a, b)^{-1} = h_r(b,
    a)$ for all $a, b\in B$,
\end{enumerate}
where $h_\ell$ and $h_r$ are the bi-transversal maps given below
Definition \ref{def: Bitrasversal}.
\end{defn}

\begin{thm}\label{thm: bigyrodecomposition is a bigyrogrouppoid}
If $\Gam = H_LBH_R$ is a bi-gyrodecomposition, then $B$ equipped
with the bi-transversal operation forms a bi-gyrogroupoid.
\end{thm}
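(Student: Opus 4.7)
The plan is to verify each of the five axioms (BG1)--(BG5) of Definition~\ref{def: bigyrogroupoid} for the groupoid $(B,\odot)$ by simply citing the structural theorems already established in Section~\ref{sec: bitransversal}. Since $\Gam = H_LBH_R$ is a bi-gyrodecomposition, $B$ is a bi-gyrotransversal and a twisted subgroup with $h_\ell(a,b)^{-1}=h_\ell(b,a)$ and $h_r(a,b)^{-1}=h_r(b,a)$, so all the hypotheses of the theorems in Section~\ref{sec: bitransversal} are available. The two-sided identity will be the group identity $1$ of $\Gam$, which lies in $B$ by the twisted subgroup property.

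\textbf{Step 1: (BG1) and (BG2).} I would point out that by Theorem~\ref{thm: Properties of bigyrotransversal and twisted subgroup I}\eqref{item: identity in bigyrotransversal}, $1\in B$ satisfies $1\odot b = b\odot 1 = b$ for every $b\in B$, giving (BG1) with $0=1$. For (BG2), item~\eqref{item: inverse in bigyrotransversal} of the same theorem furnishes, for each $a\in B$, an element $a^{-1}\in B$ with $a^{-1}\odot a = 1 = 0$, so that $a^{-1}$ serves as a left inverse of $a$.

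\textbf{Step 2: (BG3).} Corollary~\ref{cor: bigyration is automorphism} guarantees that $\lgyr{a,b}{}$ and $\rgyr{a,b}{}$ belong to $\aut{B,\odot}$ for all $a,b\in B$, and Theorem~\ref{thm: bigyroassociative law} provides the required identity
\[
(a\odot b)\odot\lgyr{a,b}{c} = \rgyr{b,c}{a}\odot(b\odot c)
\]
for all $a,b,c\in B$. Together these give (BG3).

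\textbf{Step 3: (BG4) and (BG5).} Axiom (BG4) is precisely the content of items \eqref{item: like left loop property, right gyration} and \eqref{item: like left loop property, left gyration} of Theorem~\ref{thm: like left loop properties}, which apply because the hypotheses $h_\ell(a,b)^{-1}=h_\ell(b,a)$ and $h_r(a,b)^{-1}=h_r(b,a)$ are built into the definition of a bi-gyrodecomposition. For (BG5), specializing Theorem~\ref{thm: Properties of bigyrotransversal and twisted subgroup I}\eqref{item: bigyration of 1 and a} with $b=a$ and the identity playing the role of $0$ gives $\lgyr{a,0}{}=\rgyr{a,0}{}=\id{B}$.

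There is no genuine technical obstacle here: the entire proof is a cross-referencing exercise because Section~\ref{sec: bitransversal} was organized precisely so that the twisted-subgroup hypothesis together with the symmetry condition on the transversal maps would yield exactly the bi-gyrogroupoid axioms. The only conceptual point worth flagging explicitly in the write-up is that the identity element $0$ of the bi-gyrogroupoid is the group identity $1$ of $\Gam$, so that ``$0$'' in (BG4)--(BG5) must be read as $1$ when invoking the cited results from Theorem~\ref{thm: Properties of bigyrotransversal and twisted subgroup I}.
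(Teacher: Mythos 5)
Your proposal is correct and matches the paper's own proof essentially verbatim: the paper likewise verifies (BG1)--(BG2) and (BG5) via Theorem~\ref{thm: Properties of bigyrotransversal and twisted subgroup I}, (BG3) via Corollary~\ref{cor: bigyration is automorphism} together with Theorem~\ref{thm: bigyroassociative law}, and (BG4) via Theorem~\ref{thm: like left loop properties}. No gaps.
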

\begin{proof}
Axiom (BG1) holds by Theorem \ref{thm: Properties of
bigyrotransversal and twisted subgroup I} \eqref{item: identity in
bigyrotransversal}, where the identity $1$ of $\Gam$ acts as the
identity of $B$. Axiom (BG2) holds by Theorem \ref{thm: Properties
of bigyrotransversal and twisted subgroup I} \eqref{item: inverse in
bigyrotransversal}, where $b^{-1}$ acts as a left inverse of $b\in
B$ with respect to the bi-transversal operation. Axiom (BG3) holds
by Corollary \ref{cor: bigyration is automorphism} and Theorem
\ref{thm: bigyroassociative law}. Axiom (BG4) holds by Theorem
\ref{thm: like left loop properties}. Axiom (BG5) holds by Theorem
\ref{thm: Properties of bigyrotransversal and twisted subgroup I}
\eqref{item: bigyration of 1 and a}.
\end{proof}

\par It is shown in Section \ref{sec: bitransversal} that
any bi-transversal decomposition $\Gam = H_LBH_R$ gives rise to a
bi-transversal groupoid $(B, \odot)$. Theorem \ref{thm:
bigyrodecomposition is a bigyrogrouppoid} asserts that in the
special case when the decomposition is a bi-gyrodecomposition, the
bi-transversal groupoid $(B, \odot)$ becomes the bi-gyrogroupoid
$(B, \oplus_b)$ described in Definition \ref{def: bigyrogroupoid}.
Hence, in particular, the binary operations $\oplus_b$ and $\odot$
share the same algebraic properties. Further, the identity of the
bi-gyrogroupoid $B$ coincides with the group identity of $\Gam$ and
$\ominus_b b = b^{-1}$ for all $b\in B$.

\begin{thm}[Bi-gyration invariant relation]\label{thm: bigyration invariant relation}
Let $\Gam = H_L B H_R$ be a bi-gyrodecomposition. If $\rho$ is a
finite composition of right gyrations of $B$, then
\begin{equation}\label{eqn: left gyration invariant relation}
\lgyr{a, b}{} = \lgyr{\rho(a), \rho(b)}{}
\end{equation}
for all $a, b\in B$. If $\lamb$ is a finite composition of left
gyrations of $B$, then
\begin{equation}\label{eqn: right gyration invariant relation}
\rgyr{a, b}{} = \rgyr{\lamb(a), \lamb(b)}{}
\end{equation}
for all $a, b\in B$.
\end{thm}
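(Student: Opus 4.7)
The plan is to observe that this statement is, verbatim, the conclusion of Theorem~\ref{thm: invariant of bigyration} applied in the present setting. By item (1) of Definition~\ref{def: Bi-gyrodecomposition}, any bi-gyrodecomposition $\Gam = H_L B H_R$ presents $B$ as a bi-gyrotransversal of $H_L$ and $H_R$ in $\Gam$, which is precisely the hypothesis of Theorem~\ref{thm: invariant of bigyration}. Consequently, both identities \eqref{eqn: left gyration invariant relation} and \eqref{eqn: right gyration invariant relation} follow immediately; neither the twisted-subgroup axiom nor the inversion conditions $h_\ell(a,b)^{-1}=h_\ell(b,a)$, $h_r(a,b)^{-1}=h_r(b,a)$ from Definition~\ref{def: Bi-gyrodecomposition} needs to be invoked.

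For the reader who would prefer a self-contained sketch, the underlying mechanism goes as follows. Every right gyration has the form $\rgyr{a_i, b_i}{} = \alp_{h_\ell(a_i, b_i)^{-1}}$ with $h_\ell(a_i, b_i)^{-1}\in H_L$, and since $H_L$ is a subgroup and $\alp_{gh} = \alp_g \circ \alp_h$, any finite composition $\rho$ of right gyrations equals $\alp_h$ for a single element $h \in H_L$. Theorem~\ref{thm: commuting relation conjugation and gyration automorphism}\eqref{item: commuting relation of conjugation and left gyration} then yields
\[
\rho\circ\lgyr{a, b}{} \;=\; \lgyr{\rho(a), \rho(b)}{}\circ\rho,
\]
while Theorem~\ref{thm: commuting relation conjugation and gyration automorphism}\eqref{item: left and right gyration commute} gives $\rho \circ \lgyr{a, b}{} = \lgyr{a, b}{}\circ \rho$. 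Equating the right-hand sides and cancelling the bijection $\rho$ yields \eqref{eqn: left gyration invariant relation}. The derivation of \eqref{eqn: right gyration invariant relation} is symmetric: one writes $\lamb = \alp_h$ for some $h\in H_R$ and applies the companion commuting relation together with the commutativity of left and right gyrations.

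The main obstacle is essentially nil: the only point that requires verification is that all the ambient structure used in Section~\ref{sec: bitransversal}---in particular that $H_L$ and $H_R$ normalize $B$ and commute elementwise---is already present by virtue of the bi-gyrotransversal clause in Definition~\ref{def: Bi-gyrodecomposition}. Once this is noted, the theorem is simply a corollary of results already in hand, and deserves explicit restatement here only so that later sections may cite it in the bi-gyrodecomposition language without re-deriving it from the bi-gyrotransversal framework.
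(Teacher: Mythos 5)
Your proposal is correct and matches the paper's own proof, which likewise derives the statement as an immediate consequence of Theorem~\ref{thm: invariant of bigyration} via the bi-gyrotransversal clause of Definition~\ref{def: Bi-gyrodecomposition}. Your self-contained sketch also reproduces the paper's argument for that earlier theorem (writing $\rho=\alp_h$ with $h\in H_L$ and invoking the commuting relations), so nothing further is needed.
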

\begin{proof}
The theorem follows immediately from Theorem \ref{thm: invariant of
bigyration}.
\end{proof}

\begin{thm}[Bi-gyration commuting relation]\label{thm: bigyrodecomposition, commuting relation of bigyration}
Let $\Gam = H_L B H_R$ be a bi-gyrodecomposition. If $\rho$ is a
finite composition of right gyrations of $B$, then
\begin{equation}\label{eqn: bigyrodecomposition, commuting relation of rho and right gyration}
\rho\circ\rgyr{a, b}{} = \rgyr{\rho(a), \rho(b)}{}\circ\rho
\end{equation}
for all $a, b\in B$. If $\lamb$ is a finite composition of left
gyrations of $B$, then
\begin{equation}\label{eqn: bigyrodecomposition, commuting relation of lambda and left gyration}
\lamb\circ\lgyr{a, b}{} = \lgyr{\lamb(a), \lamb(b)}{}\circ\lamb
\end{equation}
for all $a, b\in B$.
\end{thm}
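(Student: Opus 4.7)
The plan is essentially a one-line invocation of earlier work. By Definition \ref{def: Bi-gyrodecomposition}, the hypothesis that $\Gam = H_L B H_R$ is a bi-gyrodecomposition includes, as its first condition, that $B$ is a bi-gyrotransversal of $H_L$ and $H_R$ in $\Gam$. Thus every structural fact established in Section \ref{sec: bitransversal} for bi-gyrotransversals is available here without further work.

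In particular, Theorem \ref{thm: commuting relation of bigyration} already asserts precisely the two commuting relations \eqref{eqn: bigyrodecomposition, commuting relation of rho and right gyration} and \eqref{eqn: bigyrodecomposition, commuting relation of lambda and left gyration}, stated verbatim for bi-gyrotransversals. So I would simply write that the claim follows immediately from Theorem \ref{thm: commuting relation of bigyration} applied to the bi-gyrotransversal $B$.

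If one wanted to recall the underlying mechanism (for the reader's convenience rather than as an independent argument), one could note: a finite composition $\rho$ of right gyrations is, by \eqref{eqn: bigyration in conjugate form}, a finite composition of conjugations $\alp_{h_\ell(a_i,b_i)^{-1}}$ with all $h_\ell(a_i,b_i)\in H_L$, so $\rho=\alp_h$ for some $h\in H_L\subseteq H_L H_R$; then Theorem \ref{thm: commuting relation conjugation and gyration automorphism}\eqref{item: commuting relation of conjugation and right gyration} gives $\alp_h\circ\rgyr{a,b}{}=\rgyr{\alp_h(a),\alp_h(b)}{}\circ\alp_h$, which is \eqref{eqn: bigyrodecomposition, commuting relation of rho and right gyration}. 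The statement for $\lamb$ is obtained by symmetry, writing $\lamb=\alp_h$ for some $h\in H_R$ and applying Theorem \ref{thm: commuting relation conjugation and gyration automorphism}\eqref{item: commuting relation of conjugation and left gyration}.

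There is no real obstacle: the only potential subtlety is recognizing that the defining conditions of a bi-gyrodecomposition (twisted subgroup property of $B$ and the symmetry $h_\ell(a,b)^{-1}=h_\ell(b,a)$, $h_r(a,b)^{-1}=h_r(b,a)$) are not used here at all; the conclusion depends only on the bi-gyrotransversal part of the hypothesis. Accordingly, the proof is a single sentence citing Theorem \ref{thm: commuting relation of bigyration}, mirroring exactly the pattern used for the preceding Theorem \ref{thm: bigyration invariant relation}, which itself was deduced from Theorem \ref{thm: invariant of bigyration} in the same way.
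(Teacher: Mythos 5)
Your proposal is correct and matches the paper exactly: the paper's proof is the single sentence ``The theorem follows immediately from Theorem \ref{thm: commuting relation of bigyration}.'' Your additional unpacking of the mechanism via $\rho = \alp_h$ and Theorem \ref{thm: commuting relation conjugation and gyration automorphism} is accurate but not needed.
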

\begin{proof}
The theorem follows immediately from Theorem \ref{thm: commuting
relation of bigyration}.
\end{proof}

\begin{thm}[Trivial bi-gyration]
If $\Gam = H_L B H_R$ is a bi-gyrodecomposition, then for all $a\in
B$,
\begin{eqnarray}
\begin{split}
\lgyr{0, a}{} &= \lgyr{a, 0}{} &= \id{B}\\
\lgyr{a, \ominus_b a}{} &= \lgyr{\ominus_b a, a}{} &= \id{B}\\
\rgyr{0, a}{} &= \rgyr{a, 0}{} &= \id{B}\\
\rgyr{a, \ominus_b a}{} &= \rgyr{\ominus_b a, a}{} &= \id{B}\\
\lgyr{a, a}{} &= \rgyr{a, a}{} &= \id{B}.
\end{split}
\end{eqnarray}
\end{thm}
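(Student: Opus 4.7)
The plan is to reduce each listed identity directly to a result already proved in Sections \ref{sec: bigyrogroupoid} and \ref{sec: bitransversal}, using the dictionary supplied by Theorem \ref{thm: bigyrodecomposition is a bigyrogrouppoid}: the bi-gyrogroupoid identity $0$ coincides with the group identity $1$ of $\Gam$, and $\ominus_b a = a^{-1}$ for all $a\in B$. Under this identification, every equation in the statement becomes an equation that has either already been verified for bi-gyrotransversals whose underlying subset is a twisted subgroup, or that holds in any bi-gyrogroupoid.

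First I would rewrite the four ``off-diagonal'' lines in the bi-gyrotransversal notation: $\lgyr{0,a}{}$ and $\lgyr{a,0}{}$ become $\lgyr{1,a}{}$ and $\lgyr{a,1}{}$, and $\lgyr{a,\ominus_b a}{}$ and $\lgyr{\ominus_b a,a}{}$ become $\lgyr{a,a^{-1}}{}$ and $\lgyr{a^{-1},a}{}$, and similarly for the right gyrations. These eight equalities are then immediate applications of Theorem \ref{thm: Properties of bigyrotransversal and twisted subgroup I}, items \eqref{item: bigyration of 1 and a} and \eqref{item: left and right gyration of b and b inverse}, which give $\lgyr{1,b}{} = \lgyr{b,1}{} = \rgyr{1,b}{} = \rgyr{b,1}{} = \id{B}$ and $\lgyr{b^{-1},b}{} = \lgyr{b,b^{-1}}{} = \rgyr{b^{-1},b}{} = \rgyr{b,b^{-1}}{} = \id{B}$ under the hypotheses of a bi-gyrodecomposition.

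The final line $\lgyr{a,a}{} = \rgyr{a,a}{} = \id{B}$ is a purely bi-gyrogroupoid-level statement: since $B$ equipped with $\oplus_b$ is a bi-gyrogroupoid by Theorem \ref{thm: bigyrodecomposition is a bigyrogrouppoid}, it is just Theorem \ref{thm: basic properties of bigyrogroup I} item \eqref{item: bigyration generated by a,a}, which was obtained by setting $b=0$ in axioms (BG4a) and (BG4b) and invoking (BG5).

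There is essentially no obstacle here; the theorem is a convenient consolidation of previously established facts for the specific setting of a bi-gyrodecomposition. The only step that requires any care is the routine translation between the two notational systems, $(\oplus_b, 0, \ominus_b)$ versus $(\odot, 1, {}^{-1})$, which is precisely what Theorem \ref{thm: bigyrodecomposition is a bigyrogrouppoid} legitimizes.
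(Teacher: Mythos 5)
Your proposal is correct and follows essentially the same route as the paper: the published proof likewise cites Theorem \ref{thm: basic properties of bigyrogroup I} \eqref{item: bigyration generated by a,a} for the last line and Theorem \ref{thm: Properties of bigyrotransversal and twisted subgroup I} \eqref{item: bigyration of 1 and a}--\eqref{item: left and right gyration of b and b inverse} for the remaining lines, with the identifications $0=1$ and $\ominus_b a = a^{-1}$ understood. Your explicit remark that Theorem \ref{thm: bigyrodecomposition is a bigyrogrouppoid} is what legitimizes the notational translation is a helpful touch but not a departure from the paper's argument.
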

\begin{proof}
The theorem follows from Theorem \ref{thm: basic properties of
bigyrogroup I} \eqref{item: bigyration generated by a,a} and Theorem
\ref{thm: Properties of bigyrotransversal and twisted subgroup I}
\eqref{item: bigyration of 1 and a}--\eqref{item: left and right
gyration of b and b inverse}.
\end{proof}

\begin{thm}[Bi-gyration inversion law]\label{thm: bigyrodecomposition inversion Law}
If $\Gam = H_LBH_R$ is a \hfill\\ bi-gyrodecomposition, then
$$\ilgyr{a, b}{} = \lgyr{b, a}{}\quad\textrm{and}\quad\irgyr{a, b}{} = \rgyr{b, a}{}$$
for all $a, b\in B$.
\end{thm}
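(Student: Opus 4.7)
The plan is to observe that this theorem is essentially an immediate corollary of Lemma~\ref{lem: h(a, b)^{-1} = h (b, a) and inverse bigyration}, since the hypotheses of that lemma are built into the definition of a bi-gyrodecomposition.

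First I would unpack the definition. By Definition~\ref{def: Bi-gyrodecomposition}, saying that $\Gam = H_L B H_R$ is a bi-gyrodecomposition means in particular that $B$ is a bi-transversal of $H_L$ and $H_R$ in $\Gam$ and that the associated left and right transversal maps $h_\ell$ and $h_r$ satisfy
\[
h_\ell(a, b)^{-1} = h_\ell(b, a) \qquad\text{and}\qquad h_r(a, b)^{-1} = h_r(b, a)
\]
for all $a, b \in B$. These are exactly the hypotheses imposed on the bi-transversal in Lemma~\ref{lem: h(a, b)^{-1} = h (b, a) and inverse bigyration}.

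Applying that lemma verbatim then yields $\ilgyr{a, b}{} = \lgyr{b, a}{}$ and $\irgyr{a, b}{} = \rgyr{b, a}{}$ for all $a, b \in B$, which is the desired bi-gyration inversion law. There is no real obstacle here; the content of the statement has already been absorbed into the axioms of a bi-gyrodecomposition, so the proof reduces to a single citation of the lemma.
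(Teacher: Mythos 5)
Your proposal is correct and matches the paper's own proof exactly: the paper likewise derives the theorem as an immediate consequence of Lemma~\ref{lem: h(a, b)^{-1} = h (b, a) and inverse bigyration}, whose hypotheses are precisely condition (3) of Definition~\ref{def: Bi-gyrodecomposition}.
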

\begin{proof}
The theorem follows immediately from Lemma \ref{lem: h(a, b)^{-1} =
h (b, a) and inverse bigyration}.
\end{proof}

\begin{thm}[Even bi-gyration]\label{thm: bigyrodecomposition is even}
If $\Gam = H_LBH_R$ is a bi-gyrodecomposition, then left and right
gyrations of $B$ are even:
$$\lgyr{\ominus_b a, \ominus_b b}{} = \lgyr{a, b}{}\quad\textrm{and}\quad\rgyr{\ominus_b a, \ominus_b b}{} = \rgyr{a, b}{}$$
for all $a, b\in B$.
\end{thm}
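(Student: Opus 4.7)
The plan is to show that this statement is essentially a rebranding of the earlier Theorem \ref{thm: even bigyration}, once one recognizes that within a bi-gyrodecomposition, the bi-gyrogroupoid inverse $\ominus_b$ coincides with the ambient group inverse on $B$. So rather than re-deriving anything from the transversal maps, I would import the earlier result wholesale and change notation.

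First I would verify that a bi-gyrodecomposition $\Gam = H_LBH_R$ satisfies all three hypotheses of Theorem \ref{thm: even bigyration}: by Definition \ref{def: Bi-gyrodecomposition}, $B$ is a bi-gyrotransversal of $H_L$ and $H_R$, $B$ is a twisted subgroup of $\Gam$, and the transversal maps satisfy $h_\ell(a,b)^{-1} = h_\ell(b,a)$ and $h_r(a,b)^{-1} = h_r(b,a)$ for all $a,b \in B$. These are exactly the hypotheses under which Theorem \ref{thm: even bigyration} yields $\lgyr{a^{-1}, b^{-1}}{} = \lgyr{a, b}{}$ and $\rgyr{a^{-1}, b^{-1}}{} = \rgyr{a, b}{}$ for all $a,b \in B$.

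Next I would invoke the identification made in the paragraph following Theorem \ref{thm: bigyrodecomposition is a bigyrogrouppoid}, namely that under a bi-gyrodecomposition the two-sided inverse $\ominus_b b$ in the bi-gyrogroupoid $(B, \oplus_b)$ coincides with the group-theoretic inverse $b^{-1}$ in $\Gam$, for every $b \in B$. Substituting $a^{-1} = \ominus_b a$ and $b^{-1} = \ominus_b b$ into the identities produced in the previous step yields the two claimed equalities, completing the proof.

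There is no real obstacle here: the only subtlety is making sure that the three hypotheses of Definition \ref{def: Bi-gyrodecomposition} are exactly the premises of Theorem \ref{thm: even bigyration}, and that the inverse notation in the twisted subgroup $B\subseteq\Gam$ matches the bi-gyrogroupoid inverse $\ominus_b$. Both checks are immediate, so the proof should be a one-sentence appeal to Theorem \ref{thm: even bigyration} together with the inverse identification.
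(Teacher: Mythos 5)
Your proposal is correct and matches the paper's proof exactly: the paper also derives this theorem as an immediate consequence of Theorem \ref{thm: even bigyration}, relying on the identification $\ominus_b a = a^{-1}$ recorded after Theorem \ref{thm: bigyrodecomposition is a bigyrogrouppoid}. You have simply spelled out the hypothesis-checking that the paper leaves implicit.
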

\begin{proof}
The theorem follows immediately from Theorem \ref{thm: even
bigyration}.
\end{proof}

\begin{thm}[Left and right cancellation laws]\label{thm: bigyrodecomposition cancellation law}
If $\Gam = H_LBH_R$ is a bi-gyrodecomposition, then $B$ satisfies
the left cancellation law
\begin{equation}\label{eqn: bigyrodecomposition left cancellation law}
\ominus_b\rgyr{a, b}{a}\oplus_b (a\oplus_b b) = b
\end{equation}
and the right cancellation law
\begin{equation}\label{eqn: bigyrodecomposition right cancellation law}
(a\oplus_b b)\ominus_b\lgyr{a, b}{b} = a
\end{equation}
for all $a, b\in B$.
\end{thm}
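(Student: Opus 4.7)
The plan is to reduce the claim to the cancellation laws already proved for abstract bi-gyrogroupoids, since the heavy lifting has been done by earlier results. By Theorem \ref{thm: bigyrodecomposition is a bigyrogrouppoid}, any bi-gyrodecomposition $\Gam = H_LBH_R$ equips $B$ with a bi-gyrogroupoid structure in which $\oplus_b$ coincides with the bi-transversal operation $\odot$, the two-sided identity $0$ is the group identity $1 \in \Gam$, and by uniqueness of two-sided inverses (Theorem \ref{thm: unique two-sided inverse}) together with Theorem \ref{thm: Properties of bigyrotransversal and twisted subgroup I}\eqref{item: inverse in bigyrotransversal}, we have $\ominus_b b = b^{-1}$ for every $b\in B$. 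The left and right gyrations appearing in the statement are the same ones defined in Definition \ref{def: Bigyration} and realized as bi-gyrogroupoid bi-automorphisms via Corollary \ref{cor: bigyration is automorphism}.

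With this identification of the bi-gyrogroupoid structure in place, I would simply invoke Theorem \ref{thm: left-right cancellation law}, which proves, for every bi-gyrogroupoid, the identities
\[
\ominus_b\rgyr{a, b}{a}\oplus_b (a\oplus_b b) = b \qquad \text{and} \qquad (a\oplus_b b)\ominus_b\lgyr{a, b}{b} = a.
\]
Applying this to the particular bi-gyrogroupoid $(B,\oplus_b)$ arising from the bi-gyrodecomposition immediately yields \eqref{eqn: bigyrodecomposition left cancellation law} and \eqref{eqn: bigyrodecomposition right cancellation law}.

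There is essentially no obstacle to overcome in this proof; the conceptual content was absorbed into Theorem \ref{thm: bigyrodecomposition is a bigyrogrouppoid} (which verified axioms (BG1)--(BG5) for $(B,\odot)$) and into Theorem \ref{thm: left-right cancellation law} (which distilled the cancellation laws from those axioms together with Theorem \ref{thm: basic properties of bigyrogroup I}\eqref{item: left cancelation law} and Theorem \ref{thm: properties of unique inverse}\eqref{item: - commutes with bi-gyration}). The only detail requiring mention is the translation between the abstract bi-gyrogroupoid notation ($0$, $\oplus_b$, $\ominus_b$) and the concrete group-theoretic data ($1$, $\odot$, inversion in $\Gam$), which is already guaranteed by Theorem \ref{thm: bigyrodecomposition is a bigyrogrouppoid}. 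Thus the proof reduces to a one-line citation of Theorem \ref{thm: left-right cancellation law}.
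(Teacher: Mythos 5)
Your proposal is correct and matches the paper's own proof, which likewise deduces the result by applying Theorem \ref{thm: left-right cancellation law} to the bi-gyrogroupoid structure on $B$ guaranteed by Theorem \ref{thm: bigyrodecomposition is a bigyrogrouppoid}. The extra detail you supply about identifying $0$ with $1$ and $\ominus_b b$ with $b^{-1}$ is consistent with the remarks the paper makes immediately after Theorem \ref{thm: bigyrodecomposition is a bigyrogrouppoid}.
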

\begin{proof}
The theorem follows immediately from Theorem \ref{thm: left-right
cancellation law}.
\end{proof}

\begin{thm}[Left and right bi-gyroassociative laws]\label{thm: bigyrodecomposition bigyroassociative law}
If $\Gam = H_LBH_R$ is a bi-gyrodecomposition, then $B$ satisfies
the left bi-gyroassociative law
\begin{equation}\label{eqn: bigyrodecomposition left bigyroassociative law}
a\oplus_b (b\oplus_b c) = (\rgyr{c, b}{a}\oplus_b
b)\oplus_b\lgyr{\rgyr{c, b}{a}, b}{c}
\end{equation}
and the right bi-gyroassociative law
\begin{equation}\label{eqn: bigyrodecomposition right bigyroassociative law}
(a\oplus_b b)\oplus_b c = \rgyr{b, \lgyr{b, a}{c}}{a}\oplus_b
(b\oplus_b \lgyr{b, a}{c})
\end{equation}
for all $a, b, c\in B$.
\end{thm}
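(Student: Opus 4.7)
The plan is to recognize that this theorem is essentially a specialization of the general left and right bi-gyroassociative laws (Theorem \ref{thm: left and right bi-gyroassociative law}) to the bi-gyrodecomposition setting, combined with the bi-gyration inversion law (Theorem \ref{thm: bigyrodecomposition inversion Law}) just established. So the main work has already been done; I just need to splice these two results together correctly.

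First, I would invoke Theorem \ref{thm: bigyrodecomposition is a bigyrogrouppoid} to note that $B$ equipped with the bi-transversal operation $\odot$ is a bi-gyrogroupoid, so $\oplus_b = \odot$ and all general bi-gyrogroupoid facts apply. Then, applying Theorem \ref{thm: left and right bi-gyroassociative law} directly to $a, b, c \in B$, I obtain the two identities
\begin{equation*}
a\oplus_b (b\oplus_b c) = (\irgyr{b, c}{a}\oplus_b b)\oplus_b\lgyr{\irgyr{b, c}{a}, b}{c}
\end{equation*}
and
\begin{equation*}
(a\oplus_b b)\oplus_b c = \rgyr{b, \ilgyr{a, b}{c}}{a}\oplus_b (b\oplus_b \ilgyr{a, b}{c}).
\end{equation*}

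Next, I would use Theorem \ref{thm: bigyrodecomposition inversion Law}, which asserts that under the bi-gyrodecomposition hypothesis, $\irgyr{b,c}{} = \rgyr{c,b}{}$ and $\ilgyr{a,b}{} = \lgyr{b,a}{}$. Substituting $\irgyr{b,c}{a} = \rgyr{c,b}{a}$ into the first identity (in both occurrences, including inside the left gyration subscript) yields the left bi-gyroassociative law in the stated form, and substituting $\ilgyr{a,b}{c} = \lgyr{b,a}{c}$ into the second identity (again in both occurrences) yields the right bi-gyroassociative law in the stated form.

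There is no substantive obstacle here: the theorem is a direct corollary and the proof is purely a matter of rewriting inverse bi-gyrations via the inversion law. The only thing to watch is that the substitutions have to be made consistently in every occurrence inside the nested gyration brackets, which is a notational rather than mathematical concern.
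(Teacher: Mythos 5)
Your proposal is correct and matches the paper's own proof, which likewise derives the result by applying the general bi-gyroassociative laws of Theorem \ref{thm: left and right bi-gyroassociative law} and then rewriting the inverse bi-gyrations via the bi-gyration inversion law of Theorem \ref{thm: bigyrodecomposition inversion Law}. The substitutions $\irgyr{b,c}{}=\rgyr{c,b}{}$ and $\ilgyr{a,b}{}=\lgyr{b,a}{}$ are exactly what is needed, and you apply them consistently in all occurrences.
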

\begin{proof}
The theorem follows from Theorems \ref{thm: left and right
bi-gyroassociative law} and \ref{thm: bigyrodecomposition inversion
Law}.
\end{proof}

\begin{thm}[Left gyration reduction property]\label{thm: left gyration reduction property}
If $\Gam = H_LBH_R$ is a bi-gyrodecomposition, then
\begin{equation}\label{eqn: left gyration reduction property I}
\lgyr{a, b}{} = \lgyr{\rgyr{b, a}{a}, b\oplus_b a}{}
\end{equation}
and
\begin{equation}\label{eqn: left gyration reduction property II}
\lgyr{a, b}{} = \lgyr{a\oplus_b b, \rgyr{a, b}{b}}{}
\end{equation}
for all $a, b\in B$.
\end{thm}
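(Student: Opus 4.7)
The plan is to deduce both identities directly from the already-established results on bi-gyrotransversal decompositions. Under the hypotheses of a bi-gyrodecomposition, the bi-transversal operation $\odot$ coincides with $\oplus_b$ by Theorem \ref{thm: bigyrodecomposition is a bigyrogrouppoid}, so the four identities of Theorem \ref{thm: like left loop properties} transfer verbatim with $\oplus_b$ in place of $\odot$. In particular, identity \eqref{eqn: left gyration reduction property I} is precisely item (4) of Theorem \ref{thm: like left loop properties}, so nothing further is needed for the first equation.

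For identity \eqref{eqn: left gyration reduction property II}, my plan is to obtain it by inverting the first equation applied with the roles of $a$ and $b$ interchanged. Swapping $a$ and $b$ in \eqref{eqn: left gyration reduction property I} yields
\[
\lgyr{b, a}{} = \lgyr{\rgyr{a, b}{b},\, a\oplus_b b}{}.
\]
I will then take the inverse map of both sides and invoke the bi-gyration inversion law (Theorem \ref{thm: bigyrodecomposition inversion Law}), which says $\ilgyr{x, y}{} = \lgyr{y, x}{}$ for all $x, y\in B$. Applied to the left-hand side, this converts $\lgyr{b, a}{}$ into $\lgyr{a, b}{}$; applied to the right-hand side, it converts $\lgyr{\rgyr{a, b}{b},\, a\oplus_b b}{}$ into $\lgyr{a\oplus_b b,\, \rgyr{a, b}{b}}{}$, giving exactly \eqref{eqn: left gyration reduction property II}.

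I do not anticipate any substantive obstacle here, since both ingredients, namely the left-loop-type identity of Theorem \ref{thm: like left loop properties} and the bi-gyration inversion law, are already in hand. The only place where one must be careful is that the inversion law swaps \emph{both} arguments of a left gyration simultaneously, so that the initial transposition $(a, b)\mapsto(b, a)$ is undone on the left while being propagated into a second-slot swap on the right. Keeping this bookkeeping straight is the whole content of the argument.
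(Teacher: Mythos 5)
Your proposal is correct and matches the paper's own proof: identity \eqref{eqn: left gyration reduction property I} is read off from Theorem \ref{thm: like left loop properties} \eqref{item: like right loop property, left gyration}, and identity \eqref{eqn: left gyration reduction property II} is derived from it by combining the swap $(a,b)\mapsto(b,a)$ with the bi-gyration inversion law of Theorem \ref{thm: bigyrodecomposition inversion Law} (the paper inverts first and then swaps, you swap first and then invert, which is the same computation). Your bookkeeping of how the inversion law exchanges both slots of the left gyration is exactly right.
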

\begin{proof}
Identity \eqref{eqn: left gyration reduction property I} follows
from Theorem \ref{thm: like left loop properties} \eqref{item: like
right loop property, left gyration}. Identity \eqref{eqn: left
gyration reduction property II} is obtained from \eqref{eqn: left
gyration reduction property I} by \mbox{applying} the bi-gyration
inversion law (Theorem \ref{thm: bigyrodecomposition inversion Law})
followed by interchanging $a$ and $b$.
\end{proof}

\begin{thm}[Right gyration reduction property]\label{thm: right gyration reduction property}
If $\Gam = H_LBH_R$ is a bi-gyrodecomposition, then
\begin{equation}\label{eqn: right gyration reduction property I}
\rgyr{a, b}{} = \rgyr{\lgyr{a, b}{a}, a\oplus_b b}{}
\end{equation}
and
\begin{equation}\label{eqn: right gyration reduction property II}
\rgyr{a, b}{} = \rgyr{b\oplus_b a, \lgyr{b, a}{b}}{}
\end{equation}
for all $a, b\in B$.
\end{thm}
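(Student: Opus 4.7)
The plan is to mirror the argument used for the left gyration reduction property, applying it on the right. The first identity is essentially already in hand from the earlier structural work: Theorem \ref{thm: like left loop properties}\eqref{item: like left loop property, right gyration} states $\rgyr{a, b}{} = \rgyr{\lgyr{a, b}{a}, a\odot b}{}$ in the bi-transversal setting, and since $\Gam = H_LBH_R$ is a bi-gyrodecomposition, Theorem \ref{thm: bigyrodecomposition is a bigyrogrouppoid} tells us that the bi-transversal operation $\odot$ coincides with the bi-gyrogroup operation $\oplus_b$. So \eqref{eqn: right gyration reduction property I} will follow immediately by reading that identity in the $\oplus_b$ notation.

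For the second identity, my strategy is to derive it from the first by the same two-step maneuver used in Theorem \ref{thm: left gyration reduction property}: apply the bi-gyration inversion law from Theorem \ref{thm: bigyrodecomposition inversion Law}, then relabel. Concretely, starting from \eqref{eqn: right gyration reduction property I}, I invert both sides to obtain $\irgyr{a, b}{} = \irgyr{\lgyr{a, b}{a}, a\oplus_b b}{}$. The inversion law $\irgyr{x, y}{} = \rgyr{y, x}{}$ then turns this into
\[
\rgyr{b, a}{} = \rgyr{a\oplus_b b, \lgyr{a, b}{a}}{}.
\]
Interchanging the labels $a$ and $b$ yields $\rgyr{a, b}{} = \rgyr{b\oplus_b a, \lgyr{b, a}{b}}{}$, which is exactly \eqref{eqn: right gyration reduction property II}.

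There is no real obstacle here; the content of the theorem has been fully loaded into the earlier results. The only thing to be careful about is the bookkeeping of the inversion-then-swap step, in particular ensuring that the arguments of the inner $\lgyr{}$ are swapped correctly under the relabeling, so that $\lgyr{a, b}{a}$ becomes $\lgyr{b, a}{b}$ (not $\lgyr{b, a}{a}$) after interchanging $a$ and $b$. Once this is done, the two displayed identities follow, completing the proof.
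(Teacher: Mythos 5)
Your proposal is correct and follows the paper's own proof exactly: Identity \eqref{eqn: right gyration reduction property I} is read off from Theorem \ref{thm: like left loop properties}\eqref{item: like left loop property, right gyration}, and Identity \eqref{eqn: right gyration reduction property II} is obtained by applying the bi-gyration inversion law (Theorem \ref{thm: bigyrodecomposition inversion Law}) and then interchanging $a$ and $b$. Your explicit bookkeeping of the inversion-then-swap step is accurate.
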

\begin{proof}
Identity \eqref{eqn: right gyration reduction property I} follows
from Theorem \ref{thm: like left loop properties} \eqref{item: like
left loop property, right gyration}. Identity \eqref{eqn: right
gyration reduction property II} is obtained from \eqref{eqn: right
gyration reduction property I} by \mbox{applying} the bi-gyration
inversion law followed by \mbox{interchanging} $a$ and $b$.
\end{proof}

\begin{thm}[Bi-gyration reduction property]\label{thm: bigyration reduction property}
If $\Gam = H_LBH_R$ is a bi-gyrodecomposition, then
\begin{equation}\label{eqn: bigyration reduction property I}
\lgyr{a, b}{} = \lgyr{\lgyr{a, b}{a}, a\oplus_b b}{}
\end{equation}
and
\begin{equation}\label{eqn: bigyration reduction property II}
\rgyr{a, b}{} = \rgyr{a\oplus_b b, \rgyr{a, b}{b}}{}
\end{equation}
for all $a, b\in B$.
\end{thm}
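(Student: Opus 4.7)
The plan is to observe that both identities are essentially restatements of results already available in Theorem~\ref{thm: like left loop properties}, which provides four reduction-type formulas for any bi-gyrodecomposition. The substantive work is already done there; what remains is only to match up indices and, for the second identity, perform one inversion-swap maneuver.

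For identity \eqref{eqn: bigyration reduction property I}, I would simply cite item~(2) of Theorem~\ref{thm: like left loop properties}, which reads $\lgyr{a,b}{} = \lgyr{\lgyr{a,b}{a},\, a\odot b}{}$. Since for a bi-gyrodecomposition the bi-transversal operation $\odot$ coincides with the bi-gyrogroupoid operation $\oplus_b$ (as noted after Theorem~\ref{thm: bigyrodecomposition is a bigyrogrouppoid}), this is exactly the desired formula.

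For identity \eqref{eqn: bigyration reduction property II}, I would start from item~(3) of the same theorem, namely $\rgyr{a,b}{} = \rgyr{\rgyr{b,a}{a},\, b\oplus_b a}{}$, take the inverse of both sides, and invoke the bi-gyration inversion law (Theorem~\ref{thm: bigyrodecomposition inversion Law}) to replace each inverse gyration $\irgyr{x,y}{}$ by $\rgyr{y,x}{}$. This produces $\rgyr{b,a}{} = \rgyr{b\oplus_b a,\, \rgyr{b,a}{a}}{}$; interchanging the symbols $a$ and $b$ then yields exactly \eqref{eqn: bigyration reduction property II}.

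The only mildly subtle point, and the one place where care is needed, is to recognize that \eqref{eqn: bigyration reduction property II} is \emph{not} obtained from \eqref{eqn: bigyration reduction property I} by the usual inversion-swap trick used in Theorems~\ref{thm: left gyration reduction property} and~\ref{thm: right gyration reduction property}: the two bi-gyration reduction identities concern different gyrations (left versus right), so identity (II) must be sourced from item~(3) rather than from item~(2) of Theorem~\ref{thm: like left loop properties}. Beyond that, no genuine obstacle arises.
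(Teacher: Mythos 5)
Your proposal is correct and follows essentially the same route as the paper: identity \eqref{eqn: bigyration reduction property I} is read off directly from Theorem \ref{thm: like left loop properties}\,(2), and identity \eqref{eqn: bigyration reduction property II} is obtained from Theorem \ref{thm: like left loop properties}\,(3) by inverting both sides via the bi-gyration inversion law and then interchanging $a$ and $b$. Your cautionary remark that (II) must be sourced from item (3) rather than from (I) is accurate and consistent with what the paper does.
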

\begin{proof}
Identity \eqref{eqn: bigyration reduction property I} follows from
Theorem \ref{thm: like left loop properties} \eqref{item: like left
loop property, left gyration}. Identity \eqref{eqn: bigyration
reduction property II} is obtained from Theorem \ref{thm: like left
loop properties} \eqref{item: like right loop property, right
gyration} by applying the bi-gyration inversion law \mbox{followed}
by interchanging $a$ and $b$.
\end{proof}

\begin{thm}[Left and right gyration reduction properties]\label{thm: left and right gyration reduction property}
If $\Gam = H_LBH_R$ is a bi-gyrodecomposition, then
\begin{eqnarray}
\begin{split}\label{eqn: left and right gyration reduction property I}
\rgyr{a, b}{} &= \rgyr{\ominus_b\lgyr{a, b}{b}, a\oplus_b b}{}\\
\lgyr{a, b}{} &= \lgyr{\ominus_b\lgyr{a, b}{b}, a\oplus_b b}{}
\end{split}
\end{eqnarray}
and
\begin{eqnarray}
\begin{split}\label{eqn: left and right gyration reduction property II}
\rgyr{a, b}{} &= \rgyr{a\oplus_b b, \ominus_b\rgyr{a, b}{a}}{}\\
\lgyr{a, b}{} &= \lgyr{a\oplus_b b, \ominus_b\rgyr{a, b}{a}}{}
\end{split}
\end{eqnarray}
for all $a, b\in B$.
\end{thm}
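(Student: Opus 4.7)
The plan is to deduce all four identities from Proposition~\ref{prop: relation of bigyrarion I} by two carefully chosen substitutions in the triple $(a,b,c)$, followed by two applications of the bi-gyration inversion law (Theorem~\ref{thm: bigyrodecomposition inversion Law}). The guiding observation is that each identity of Proposition~\ref{prop: relation of bigyrarion I} has the shape $(\text{bigyration})\circ(\text{bigyration})=(\text{bigyration})\circ(\text{bigyration})$; by choosing the triple so that one side collapses to $\id{B}$ via the Trivial Bi-gyration Theorem, the other side becomes exactly a reduction relation for $\rgyr{a,b}{}$ or $\lgyr{a,b}{}$ of the kind we want.

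For identity~\eqref{eqn: left and right gyration reduction property I}, I would substitute $c=\ominus_b b$ in Proposition~\ref{prop: relation of bigyrarion I}. Then $b\oplus_b c=0$, while $\rgyr{b,c}{}=\lgyr{b,c}{}=\id{B}$ and $\rgyr{a,0}{}=\lgyr{a,0}{}=\id{B}$ by the Trivial Bi-gyration Theorem, so one side of each equation in the proposition collapses to $\id{B}$. On the surviving side, Theorem~\ref{thm: properties of unique inverse}\,\eqref{item: - commutes with bi-gyration} rewrites $\lgyr{a,b}{(\ominus_b b)}$ as $\ominus_b\lgyr{a,b}{b}$, producing
$$\rgyr{a\oplus_b b,\,\ominus_b\lgyr{a,b}{b}}{}\circ\rgyr{a,b}{}=\id{B}\quad\text{and}\quad\lgyr{a\oplus_b b,\,\ominus_b\lgyr{a,b}{b}}{}\circ\lgyr{a,b}{}=\id{B}.$$
Reading off inverses via Theorem~\ref{thm: bigyrodecomposition inversion Law} and then once more using the inversion law to swap the argument pair yields~\eqref{eqn: left and right gyration reduction property I}.

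For identity~\eqref{eqn: left and right gyration reduction property II}, I would instead replace the proposition's triple $(a,b,c)$ by $(\ominus_b a,\,a,\,b)$. Now the roles of the arguments are mirrored: $(\ominus_b a)\oplus_b a=0$ and $\rgyr{\ominus_b a,a}{}=\lgyr{\ominus_b a,a}{}=\id{B}$, so again one side of each equation collapses to $\id{B}$; on the surviving side, $\rgyr{a,b}{(\ominus_b a)}$ becomes $\ominus_b\rgyr{a,b}{a}$ by Theorem~\ref{thm: properties of unique inverse}\,\eqref{item: - commutes with bi-gyration}. Two applications of the bi-gyration inversion law then deliver~\eqref{eqn: left and right gyration reduction property II}.

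The only substantive step is spotting the two substitutions $c=\ominus_b b$ and $(a,b,c)\leftarrow(\ominus_b a,a,b)$; once they are made, everything reduces to the Trivial Bi-gyration Theorem, to the identity $\lgyr{a,b}{(\ominus_b c)}=\ominus_b\lgyr{a,b}{c}$ and its right-gyration analogue, and to the bi-gyration inversion law, all of which are already in hand. I do not anticipate any obstacle beyond careful bookkeeping of which side of each equation collapses and which argument pair needs to be swapped.
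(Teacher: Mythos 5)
Your proposal is correct and follows essentially the same route as the paper: the paper also sets $c=\ominus_b b$ in Proposition~\ref{prop: relation of bigyrarion I} for the first pair of identities, and sets $a=\ominus_b b$ (which, after renaming $b\mapsto a$, $c\mapsto b$, is exactly your substitution $(a,b,c)\leftarrow(\ominus_b a, a, b)$) for the second pair, finishing with the bi-gyration inversion law in each case. The bookkeeping you describe checks out.
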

\begin{proof}
Setting $c = \ominus_b b$ in Proposition \ref{prop: relation of
bigyrarion I} \eqref{item: relation of right gyration
I}--\eqref{item: relation of left gyration I} followed by using the
bi-gyration inversion law gives \eqref{eqn: left and right gyration
reduction property I}. Setting $a = \ominus_b b$ in the same
proposition followed by using the bi-gyration inversion law gives
\begin{align}
\rgyr{b, c}{} &= \rgyr{b\oplus_b c, \ominus_b\rgyr{b, c}{b}}{}\notag\\
\lgyr{b, c}{} &= \lgyr{b\oplus_b c, \ominus_b\rgyr{b,c}{b}}{}.\notag
\end{align}
Replacing $b$ by $a$ and $c$ by $b$, we obtain \eqref{eqn: left and
right gyration reduction property II}.
\end{proof}

\begin{thm}[Left and right gyration reduction properties]\label{thm: reduction property with -a}
If $\Gam = H_LBH_R$ is a bi-gyrodecomposition, then
\begin{eqnarray}\label{eqn: reduction property with -a}
\begin{split}
\lgyr{a, b}{} &= \lgyr{\rgyr{b, a}{(a\oplus_b b)}, \ominus_b a}{}\\
\rgyr{a, b}{} &= \rgyr{\rgyr{b, a}{(a\oplus_b b)}, \ominus_b a}{}
\end{split}
\end{eqnarray}
for all $a, b\in B$.
\end{thm}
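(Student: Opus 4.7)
The plan is to start from the second pair of identities already established in Theorem \ref{thm: left and right gyration reduction property}, namely
\[
\lgyr{a, b}{} = \lgyr{a\oplus_b b,\, \ominus_b\rgyr{a, b}{a}}{}\quad\textrm{and}\quad \rgyr{a, b}{} = \rgyr{a\oplus_b b,\, \ominus_b\rgyr{a, b}{a}}{},
\]
and then apply the right gyration $\rho = \rgyr{b, a}{}$ to each argument of the bracket. The engine of the argument is the elementary calculation
\[
\rgyr{b, a}{(\ominus_b\rgyr{a, b}{a})} = \ominus_b\,\rgyr{b, a}{\rgyr{a, b}{a}} = \ominus_b a,
\]
which combines Theorem \ref{thm: properties of unique inverse}\eqref{item: - commutes with bi-gyration} (letting $\ominus_b$ slide past a right gyration) with the bi-gyration inversion law of Theorem \ref{thm: bigyrodecomposition inversion Law} (giving $\rgyr{b, a}{}\circ\rgyr{a, b}{} = \id{B}$).

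For the first (left-gyration) identity, I would invoke Theorem \ref{thm: bigyration invariant relation}: since $\rho = \rgyr{b, a}{}$ is a single right gyration, the invariance relation gives $\lgyr{x, y}{} = \lgyr{\rho(x), \rho(y)}{}$, and rewriting the right-hand side of the starting identity yields
\[
\lgyr{a, b}{} = \lgyr{\rgyr{b, a}{(a\oplus_b b)},\, \rgyr{b, a}{(\ominus_b\rgyr{a, b}{a})}}{} = \lgyr{\rgyr{b, a}{(a\oplus_b b)},\, \ominus_b a}{},
\]
which is exactly the desired identity.

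The second (right-gyration) identity is the main obstacle, because Theorem \ref{thm: bigyration invariant relation} only yields invariance of right gyrations under conjugation by \emph{left} gyrations, so it does not apply directly here. To circumvent this, I would instead apply the commuting relation of Theorem \ref{thm: bigyrodecomposition, commuting relation of bigyration} with $\rho = \rgyr{b, a}{}$ to the right-gyration reduction identity, obtaining
\[
\rgyr{b, a}{}\circ\rgyr{a\oplus_b b,\, \ominus_b\rgyr{a, b}{a}}{} = \rgyr{\rgyr{b, a}{(a\oplus_b b)},\, \ominus_b a}{}\circ\rgyr{b, a}{}.
\]
By the reduction identity and the inversion law, the left-hand side collapses to $\rgyr{b, a}{}\circ\rgyr{a, b}{} = \id{B}$, and right-cancelling the bijection $\rgyr{b, a}{}$ then yields $\rgyr{a, b}{} = \rgyr{\rgyr{b, a}{(a\oplus_b b)},\, \ominus_b a}{}$, completing the proof. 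The only real subtlety is the choice between the invariance relation and the commutation relation: because the two identities concern brackets of different types, one must use the former for the left-gyration identity and the latter (followed by cancellation) for the right-gyration identity.
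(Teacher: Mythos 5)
Your proposal is correct and follows essentially the same route as the paper: the left-gyration identity via the invariance relation (Theorem \ref{thm: bigyration invariant relation}) with $\rho = \rgyr{b, a}{}$, and the right-gyration identity via the commuting relation \eqref{eqn: bigyrodecomposition, commuting relation of rho and right gyration} followed by cancellation of $\rgyr{b, a}{}$, both applied to \eqref{eqn: left and right gyration reduction property II}. Your explicit justification of $\rgyr{b, a}{(\ominus_b\rgyr{a, b}{a})} = \ominus_b a$ and your remark on why the two identities require different tools match the paper's (more tersely stated) argument.
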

\begin{proof}
From the second equation of \eqref{eqn: left and right gyration
reduction property II}, we have
$$\lgyr{a, b}{} = \lgyr{a\oplus_b b, \ominus_b\rgyr{a, b}{a}}{}.$$
Applying Theorem \ref{thm: bigyration invariant relation} to the
previous equation with $\rho = \rgyr{b, a}{}$ gives
\begin{align}
\lgyr{a, b}{} &= \lgyr{a\oplus_b b, \ominus_b\rgyr{a, b}{a}}{}\notag\\
{} &= \lgyr{\rgyr{b, a}{(a\oplus_b b)}, \rgyr{b,a}{(\ominus_b\rgyr{a, b}{a})}}{}\notag\\
{} &= \lgyr{\rgyr{b, a}{(a\oplus_b b)}, \ominus_b a}{}.\notag
\end{align}
We obtain the last equation since $\rgyr{b, a}{} = \irgyr{a, b}{}$.
\par Similarly, the first equation of \eqref{eqn: left and right gyration
reduction property II} and Identity \eqref{eqn: bigyrodecomposition,
commuting relation of rho and right gyration} together imply
\begin{eqnarray}\label{eqn: in proof left right gyration reduction property}
\begin{split}
\id{B} &= \irgyr{a, b}{}\circ\rgyr{a\oplus_b, \ominus_b\rgyr{a,
b}{a}}{}\\
{} &= \rgyr{b, a}{}\circ\rgyr{a\oplus_b b, \ominus_b\rgyr{a,
b}{a}}{}\\
{} &= \rgyr{\rgyr{b, a}{(a\oplus_b b)}, \rgyr{b,
a}{(\ominus_b\rgyr{a,
b}{a})}}{}\circ\rgyr{b, a}{}\\
{} &= \rgyr{\rgyr{b, a}{(a\oplus_b b)}, \ominus_b a}{}\circ\rgyr{b,
a}{}.
\end{split}
\end{eqnarray}
The extreme sides of \eqref{eqn: in proof left right gyration
reduction property} imply $\rgyr{a, b}{} = \rgyr{\rgyr{b,
a}{(a\oplus_b b)}, \ominus_b a}{}$.
\end{proof}

\subsection*{Bi-gyrogroups}\label{subsec: bigyrogroup}
\par We are now in a position to present the formal definition of a
bi-gyrogroup.

\begin{defn}[Bi-gyrogroup]\label{def: bi-gyrogroup operation}
Let $\Gam = H_LBH_R$ be a bi-gyrodecomposition. The
\textit{bi-gyrogroup operation} $\oplus$ in $B$ is defined by
\begin{equation}\label{eqn: bigyrogroup operation}
a\oplus b = \rgyr{b, a}{(a\oplus_b b)},\quad a, b\in B.
\end{equation}
Here, $\oplus_b$ is the bi-transversal operation induced by the
decomposition $\Gam = H_LBH_R$. The groupoid $(B, \oplus)$
consisting of the set $B$ with the bi-gyrogroup operation $\oplus$
is called a \textit{bi-gyrogroup}.
\end{defn}

\par Throughout the remainder of this section,
we assume that $\Gam = H_LBH_R$ is a bi-gyrodecomposition and let
$(B, \oplus)$ be the corresponding bi-gyrogroup.

\begin{prop}\label{prop: identity and inverse in bigyrogroup}
The unique two-sided identity element of $(B, \oplus)$ is $0$. For
each $a\in B$, $\ominus_b a$ is the unique two-sided inverse of $a$
in $(B,\oplus)$.
\end{prop}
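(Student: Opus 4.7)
The plan is to verify the identity and inverse properties of the new operation $\oplus$ by unfolding Definition \ref{def: bi-gyrogroup operation} and reducing everything to facts already established about the bi-transversal operation $\oplus_b$.

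First, I would establish that $0$ is a two-sided identity for $\oplus$. For any $a\in B$, compute
\[
0\oplus a = \rgyr{a, 0}{(0\oplus_b a)} = \rgyr{a,0}{a} = a,
\]
using (BG1) for $\oplus_b$ and the Trivial bi-gyration theorem, which gives $\rgyr{a,0}{} = \id{B}$. Analogously, $a\oplus 0 = \rgyr{0,a}{(a\oplus_b 0)} = \rgyr{0,a}{a} = a$ since $\rgyr{0,a}{} = \id{B}$.

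Next, for each $a\in B$, I would show that $\ominus_b a$ is a two-sided $\oplus$-inverse of $a$. Using Theorem \ref{thm: properties of unique inverse} (bi-gyrations commute with $\ominus_b$ and fix $0$) together with the Trivial bi-gyration theorem, which tells us that $\rgyr{a, \ominus_b a}{} = \rgyr{\ominus_b a, a}{} = \id{B}$, I get
\[
a\oplus(\ominus_b a) = \rgyr{\ominus_b a, a}{(a\oplus_b(\ominus_b a))} = \rgyr{\ominus_b a, a}{0} = 0,
\]
and similarly $(\ominus_b a)\oplus a = \rgyr{a, \ominus_b a}{((\ominus_b a)\oplus_b a)} = \rgyr{a, \ominus_b a}{0} = 0$.

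For the uniqueness of the two-sided identity, the standard argument suffices: if $e$ is any two-sided identity of $(B,\oplus)$, then $e = 0\oplus e = 0$, since $0$ is a left identity by what we just proved and $e$ is a right identity by assumption. The only real content is the uniqueness of two-sided inverses, which I would handle as follows. Suppose $c\in B$ satisfies $a\oplus c = 0$. By definition, this reads $\rgyr{c,a}{(a\oplus_b c)} = 0$. Since $\rgyr{c,a}{}$ is a bijection of $B$ that fixes $0$ (Theorem \ref{thm: basic properties of bigyrogroup I}\eqref{item: bigyration of 0}), applying its inverse yields $a\oplus_b c = 0$, so $c$ is a right $\oplus_b$-inverse of $a$. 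By Theorem \ref{thm: unique two-sided inverse}, $c = \ominus_b a$. I expect no significant obstacle here; the only subtle point is to remember that bi-gyrations are automorphisms of $(B,\oplus_b)$ and hence send $0$ to $0$, which is precisely what transfers the uniqueness from $(B,\oplus_b)$ to $(B,\oplus)$.
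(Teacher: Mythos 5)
Your proof is correct and follows essentially the same route as the paper's: unfold Definition \ref{def: bi-gyrogroup operation}, use the trivial bi-gyrations $\rgyr{a,0}{}=\rgyr{0,a}{}=\rgyr{a,\ominus_b a}{}=\rgyr{\ominus_b a,a}{}=\id{B}$ to reduce everything to $(B,\oplus_b)$, and transfer uniqueness back through the bijectivity of $\rgyr{c,a}{}$. The only cosmetic difference is that in the uniqueness step the paper verifies both $a\oplus_b c=0$ and $c\oplus_b a=0$ before invoking Theorem \ref{thm: unique two-sided inverse}, whereas you pass from the single relation $a\oplus_b c=0$ directly to $c=\ominus_b a$; this is still valid, but only because Theorem \ref{thm: basic properties of bigyrogroup I}\,\eqref{item: left inverse is a right inverse} guarantees that one-sided $\oplus_b$-inverses are automatically two-sided, a fact worth citing explicitly at that point.
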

\begin{proof}
Let $a\in B$. Since $\rgyr{a, 0}{} = \rgyr{0, a}{} = \id{B}$, we
have
$$a\oplus 0 = \rgyr{0, a}{(a\oplus_b 0)} = a = \rgyr{a, 0}{(0\oplus_b a)} = (0\oplus a).$$
Hence, $0$ is a two-sided identity of $(B,\oplus)$. The uniqueness
of $0$ follows, as in the proof of Lemma \ref{lem: Uniqueness of 0
in a bigyrogroup}.
\par Since $\rgyr{a, \ominus_b a}{} = \rgyr{\ominus_b a, a}{} = \id{B}$,
we have
$$a\oplus(\ominus_b a) = \rgyr{\ominus_b a, a}{(a\ominus_b a)} = 0 =
\rgyr{a, \ominus_b a}{(\ominus_b a\oplus_b a)} = (\ominus_b a)\oplus
a.$$ Hence, $\ominus_b a$ acts as a two-sided inverse of $a$ with
respect to $\oplus$. Suppose that $b$ is a two-sided inverse of $a$
with respect to $\oplus$. Then $0 = a\oplus b = \rgyr{b,
a}{(a\oplus_b b)}$, which implies $a\oplus_b b = 0$. Similarly,
$b\oplus a = 0$ implies $b\oplus_b a = 0$. This proves that $b$ is a
two-sided inverse of $a$ with respect to $\oplus_b$. Hence, $b =
\ominus_b a$ by Theorem \ref{thm: unique two-sided inverse}.
\end{proof}

\par Following Proposition \ref{prop: identity and inverse in
bigyrogroup}, if $a$ is an element of $B$, then the unique two-sided
inverse of $a$ with respect to $\oplus$ will be denoted by $\ominus
a$. Further, $$\ominus a = \ominus_b a$$ for all $a\in B$. We also
write $a\ominus b$ instead of $a\oplus(\ominus b)$. The following
theorem asserts that left and right gyrations of the bi-transversal
groupoid $(B, \oplus_b)$ ascend to automorphisms of the bi-gyrogroup
$(B, \oplus)$.

\begin{thm}\label{thm: automorphism of bigyrogroup operation}
If $\lamb$ is a finite composition of left gyrations of $(B,
\oplus_b)$, then
\begin{equation}\label{eqn: composite of left bigyration preserve +}
\lamb(a\oplus b) = \lamb(a)\oplus\lamb(b)
\end{equation}
for all $a, b\in B$. If $\rho$ is a finite composition of right
gyrations of $(B,\oplus_b)$, then
\begin{equation}\label{eqn: composite of right bigyration preserve +}
\rho(a\oplus b) = \rho(a)\oplus\rho(b)
\end{equation}
for all $a, b\in B$.
\end{thm}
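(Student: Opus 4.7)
The plan is to unfold the definition $a \oplus b = \rgyr{b, a}{(a \oplus_b b)}$ and then push $\lamb$ (respectively $\rho$) through the right gyration and the bi-transversal operation, relying on two facts already established: each gyration is itself an automorphism of $(B, \oplus_b)$ (Corollary \ref{cor: bigyration is automorphism}), so any finite composition $\lamb$ or $\rho$ preserves $\oplus_b$; and bi-gyrations enjoy invariance and commuting relations with such compositions (Theorems \ref{thm: bigyration invariant relation} and \ref{thm: bigyrodecomposition, commuting relation of bigyration}).

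For \eqref{eqn: composite of left bigyration preserve +}, let $\lamb$ be a finite composition of left gyrations. I would first note that by Theorem \ref{thm: bigyration invariant relation}, right gyrations are invariant under $\lamb$, that is $\rgyr{b,a}{} = \rgyr{\lamb(b), \lamb(a)}{}$. Combining this with Theorem \ref{thm: bigyrodecomposition, commuting relation of bigyration} (which, applied to the right-gyration side, yields $\lamb \circ \rgyr{b,a}{} = \rgyr{\lamb(b), \lamb(a)}{} \circ \lamb$), one obtains $\lamb \circ \rgyr{b,a}{} = \rgyr{b,a}{} \circ \lamb$. Then
\begin{equation*}
\lamb(a \oplus b) = \lamb\bigl(\rgyr{b,a}{(a \oplus_b b)}\bigr) = \rgyr{b,a}{\lamb(a \oplus_b b)} = \rgyr{b,a}{(\lamb(a) \oplus_b \lamb(b))},
\end{equation*}
where in the last step I use that $\lamb$, as a composition of automorphisms of $(B, \oplus_b)$, is itself an automorphism of $(B, \oplus_b)$. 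Finally, invoking invariance once more, $\rgyr{b,a}{} = \rgyr{\lamb(b), \lamb(a)}{}$, so the right-hand side equals $\rgyr{\lamb(b), \lamb(a)}{(\lamb(a) \oplus_b \lamb(b))} = \lamb(a) \oplus \lamb(b)$.

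For \eqref{eqn: composite of right bigyration preserve +}, let $\rho$ be a finite composition of right gyrations. Here I would directly apply the commuting relation \eqref{eqn: bigyrodecomposition, commuting relation of rho and right gyration}, giving $\rho \circ \rgyr{b,a}{} = \rgyr{\rho(b), \rho(a)}{} \circ \rho$; no invariance is needed because the index pair becomes $(\rho(b), \rho(a))$ from the outset. Hence
\begin{equation*}
\rho(a \oplus b) = \rho\bigl(\rgyr{b,a}{(a \oplus_b b)}\bigr) = \rgyr{\rho(b), \rho(a)}{\rho(a \oplus_b b)} = \rgyr{\rho(b), \rho(a)}{(\rho(a) \oplus_b \rho(b))} = \rho(a) \oplus \rho(b),
\end{equation*}
again using that $\rho$ preserves $\oplus_b$ and the definition of $\oplus$.

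The only real subtlety is the left-gyration case, where one must be careful that $\lamb$ interacts with a \emph{right} gyration rather than a left one; this is exactly the situation handled by combining the invariance theorem with the conjugation commuting relation, and there is no analogous invariance needed on the right-gyration side. Everything else is a routine unwinding of the definition \eqref{eqn: bigyrogroup operation}.
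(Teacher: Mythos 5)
Your proof is correct and follows essentially the same route as the paper: unfold the definition $a\oplus b=\rgyr{b,a}{(a\oplus_b b)}$, commute $\lamb$ (resp.\ $\rho$) past $\rgyr{b,a}{}$, use that the composition preserves $\oplus_b$ as a composite of automorphisms of $(B,\oplus_b)$, and re-index via the invariance relation. The only quibble is a citation: the mixed relation $\lamb\circ\rgyr{b,a}{}=\rgyr{\lamb(b),\lamb(a)}{}\circ\lamb$ for $\lamb$ a composition of \emph{left} gyrations is not literally what Theorem \ref{thm: bigyrodecomposition, commuting relation of bigyration} states (it pairs same-handed gyrations only); the paper instead invokes Theorem \ref{thm: commuting relation conjugation and gyration automorphism} \eqref{item: left and right gyration commute} to get $\lamb\circ\rgyr{b,a}{}=\rgyr{b,a}{}\circ\lamb$ directly, though the fact you need does follow from Theorem \ref{thm: commuting relation conjugation and gyration automorphism} \eqref{item: commuting relation of conjugation and right gyration} since $\lamb=\alp_h$ for some $h\in H_R$.
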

\begin{proof}
Let $a, b\in B$. By Theorem \ref{thm: commuting relation conjugation
and gyration automorphism} \eqref{item: left and right gyration
commute}, $\lamb$ and $\rgyr{b, a}{}$ commute. Hence,
\begin{align}
\lamb(a\oplus b) &= (\lamb\circ\rgyr{b, a}{})(a\oplus_b b)\notag\\
{} &= (\rgyr{b, a}{}\circ\lamb)(a\oplus_b b)\notag\\
{} &= \rgyr{b, a}{(\lamb(a)\oplus_b\lamb(b))}\notag\\
{} &= \rgyr{\lamb(b), \lamb(a)}{(\lamb(a)\oplus_b\lamb(b))}\notag\\
{} &= \lamb(a)\oplus\lamb(b).\notag
\end{align}
We have the third equation since $\lamb$ is a finite composition of
left gyrations; the forth equation from \eqref{eqn: right gyration
invariant relation}; and the last equation from Definition \ref{def:
bi-gyrogroup operation}. Similarly, \eqref{eqn: composite of right
bigyration preserve +} is obtained from \eqref{eqn:
bigyrodecomposition, commuting relation of rho and right gyration}.
\end{proof}

\begin{lem}\label{lem: bigyration relation in bigyrogroup}
In the bi-gyrogroup $B$,
$$ \rgyr{c, a\oplus b}{}\circ\rgyr{b, a}{} = \rgyr{b\oplus c,
a}{}\circ\rgyr{c, b}{}$$ for all $a, b, c\in B$.
\end{lem}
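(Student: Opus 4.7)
The plan is to translate the lemma from the $\oplus$-world into the $\oplus_b$-world and then invoke Proposition \ref{prop: relation of bigyrarion I}. Using Definition \ref{def: bi-gyrogroup operation}, I would first rewrite $a\oplus b = \rgyr{b,a}{(a\oplus_b b)}$ and $b\oplus c = \rgyr{c,b}{(b\oplus_b c)}$. Then I would apply the right-gyration commuting relation of Theorem \ref{thm: bigyrodecomposition, commuting relation of bigyration} with $\rho=\rgyr{b,a}{}$ on the left hand side and with $\rho=\rgyr{c,b}{}$ on the right hand side, using the bi-gyration inversion law (Theorem \ref{thm: bigyrodecomposition inversion Law}) to simplify $\rho^{-1}$. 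This reduces the claim to
$$\rgyr{b,a}{}\circ\rgyr{\rgyr{a,b}{c}, a\oplus_b b}{} = \rgyr{c,b}{}\circ\rgyr{b\oplus_b c, \rgyr{b,c}{a}}{},$$
an identity that lives entirely within the $\oplus_b$-world.

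Next I would invoke Proposition \ref{prop: relation of bigyrarion I}(1), take the inverse of both sides, and apply the bi-gyration inversion law once more; this produces
$$\rgyr{c,b}{}\circ\rgyr{b\oplus_b c, \rgyr{b,c}{a}}{} = \rgyr{b,a}{}\circ\rgyr{\lgyr{a,b}{c}, a\oplus_b b}{}.$$
Combining the two displays and cancelling the common factor $\rgyr{b,a}{}$, the lemma is reduced to verifying
$$\rgyr{\rgyr{a,b}{c}, a\oplus_b b}{} = \rgyr{\lgyr{a,b}{c}, a\oplus_b b}{}.$$

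I expect this last equality to be the principal obstacle, since $\rgyr{a,b}{c}$ and $\lgyr{a,b}{c}$ are generally distinct elements of $B$. To handle it I would combine the bi-gyration invariant relation (Theorem \ref{thm: bigyration invariant relation}), which states that a right gyration is unchanged when both its arguments are pushed through a composition of left gyrations, with the reduction properties in Theorems \ref{thm: bigyration reduction property} and \ref{thm: reduction property with -a}, selecting a composition $\lambda$ of left gyrations that intertwines $\rgyr{a,b}{c}$ with $\lgyr{a,b}{c}$ while fixing $a\oplus_b b$. The careful bookkeeping of gyration arguments under these repeated applications of the commuting, inversion, and invariance relations is the chief difficulty; the remainder of the proof is a mechanical translation of the associativity of $\Gamma$ through the bi-gyrodecomposition structure.
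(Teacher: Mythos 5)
Your reduction is carried out correctly and is, in substance, the paper's own argument: you pass from $\oplus$ to $\oplus_b$ via Definition \ref{def: bi-gyrogroup operation}, the commuting relation \eqref{eqn: bigyrodecomposition, commuting relation of rho and right gyration} and the inversion law of Theorem \ref{thm: bigyrodecomposition inversion Law}, and then invert Proposition \ref{prop: relation of bigyrarion I}\,(1), arriving at the single residual identity
$$\rgyr{\rgyr{a,b}{c},\, a\oplus_b b}{} \;=\; \rgyr{\lgyr{a,b}{c},\, a\oplus_b b}{}.$$
It is worth noting that the paper's proof performs exactly the same manipulations and, in passing from its first display to its second, silently replaces the first argument $\rgyr{b,a}{(\lgyr{a,b}{c})}$ by $c$ --- which is precisely this residual identity in disguise. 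So you have correctly located the crux of the matter.

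The gap is that you do not prove this identity, and the strategy you sketch for it cannot succeed. Theorem \ref{thm: bigyration invariant relation} only permits applying one and the same composition $\lamb$ of left gyrations to \emph{both} arguments simultaneously; there is no provision for moving the first argument while ``fixing'' $a\oplus_b b$. Moreover, no composition of left gyrations carries $\rgyr{a,b}{c}$ to $\lgyr{a,b}{c}$ in general: a composition of left gyrations is conjugation by an element $k\in H_R$, whereas $\rgyr{a,b}{c}=h_\ell(a,b)^{-1}c\,h_\ell(a,b)$ is an $H_L$-conjugate of $c$; since $H_L$ and $H_R$ commute elementwise, the requirement $k\,\rgyr{a,b}{c}\,k^{-1}=\lgyr{a,b}{c}$ forces $c$ to be fixed under conjugation by $k^{-1}h_\ell(a,b)h_r(a,b)$, which fails whenever the gyrations act nontrivially --- and even if it held, the same $k$ would then disturb the second argument. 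The reduction properties you cite (Theorems \ref{thm: bigyration reduction property} and \ref{thm: reduction property with -a}) are two-variable identities whose arguments are built from $a$, $b$, $\ominus_b a$, $\ominus_b b$ only; they do not accommodate the free third variable $c$ appearing here. As it stands, your proposal is an accurate and well-organized reduction of the lemma to its genuinely hard step, but not a proof of the lemma.
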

\begin{proof}
By Theorem \ref{thm: bigyrodecomposition inversion Law} and
Proposition \ref{prop: relation of bigyrarion I} \eqref{item:
relation of right gyration I},
\begin{eqnarray}\label{eqn: in proof lemma bigyration relation in bigyrogroup}
\begin{split}
{} &{}\rgyr{b, a}{}\circ\rgyr{\lgyr{a, b}{c}, a\oplus_b b}{}\\
{} &{\hskip0.3cm}= (\rgyr{a\oplus_b b, \lgyr{a,
b}{c}}{}\circ\rgyr{a,
b}{})^{-1}\\
{} &{\hskip0.3cm}= (\rgyr{\rgyr{b, c}{a}, b\oplus_b
c}{}\circ\rgyr{b,
c}{})^{-1}\\
{} &{\hskip0.3cm}= \rgyr{c, b}{}\circ\rgyr{b\oplus_b c, \rgyr{b,
c}{a}}{}.
\end{split}
\end{eqnarray}
By Identity \eqref{eqn: bigyrodecomposition, commuting relation of
rho and right gyration} and Theorem \ref{thm: bigyrodecomposition
inversion Law}, the extreme sides of \eqref{eqn: in proof lemma
bigyration relation in bigyrogroup} imply
$$\rgyr{c, \rgyr{b, a}{(a\oplus_b b)}}{}\circ\rgyr{b, a}{} = \rgyr{\rgyr{c, b}{(b\oplus_b c)}, a}{}\circ\rgyr{c, b}{}.$$
By Definition \ref{def: bi-gyrogroup operation}, the previous
equation reads
$$\rgyr{c, a\oplus b}{}\circ\rgyr{b, a}{} = \rgyr{b\oplus c, a}{}\circ\rgyr{c, b}{},$$
which completes the proof.
\end{proof}

\begin{thm}[Bi-gyroassociative law in bi-gyrogroups]\label{thm: bigyroassociative law in bigyrogroup}
The bi-gyrogroup $B$ satisfies the left bi-gyroassociative law
\begin{equation}\label{eqn: left bigyroassociative law in bigyrogroup}
a\oplus(b\oplus c) = (a\oplus b)\oplus(\lgyr{a, b}{}\circ\rgyr{b,
a}{})(c)
\end{equation}
and the right bi-gyroassociative law
\begin{equation}\label{eqn: right bigyroassociative law in bigyrogroup}
(a\oplus b)\oplus c = a\oplus (b\oplus(\lgyr{b, a}{}\circ\rgyr{a,
b}{})(c))
\end{equation}
for all $a, b, c\in B$.
\end{thm}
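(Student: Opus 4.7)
The plan is to return to the ambient group $\Gam$ and deduce the bi-gyroassociative laws from its associativity. First I would establish the key identity
\begin{equation*}
ab=(a\oplus b)\,k(a,b),\qquad k(a,b):=h_\ell(a,b)\,h_r(a,b)\in H_LH_R,
\end{equation*}
valid for all $a,b\in B$. This follows because the bi-gyrodecomposition axiom $h_\ell(b,a)=h_\ell(a,b)^{-1}$ forces $\rgyr{b,a}{}=\alp_{h_\ell(a,b)}$, so by Definition~\ref{def: bi-gyrogroup operation}, $a\oplus b=h_\ell(a,b)(a\oplus_b b)h_\ell(a,b)^{-1}$; sliding $h_\ell(a,b)$ across the bi-transversal factorization $ab=h_\ell(a,b)(a\oplus_b b)h_r(a,b)$ yields the identity.

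I would then expand both sides of $a(bc)=(ab)c$ using this formula. Two applications give $a(bc)=\bigl(a\oplus(b\oplus c)\bigr)\,k(a,b\oplus c)\,k(b,c)$. For $(ab)c$ one must first move $c$ past $k(a,b)$; because $H_LH_R$ normalizes $B$, $\alp_{k(a,b)}(c)\in B$, and a second application yields $(ab)c=\bigl((a\oplus b)\oplus\alp_{k(a,b)}(c)\bigr)\,k(a\oplus b,\alp_{k(a,b)}(c))\,k(a,b)$. Both expressions sit in $B\cdot H_LH_R$, and this decomposition is unique: any $bK$ with $b\in B$ and $K=h_\ell h_r\in H_LH_R$ rewrites as $h_\ell(h_\ell^{-1}bh_\ell)h_r$, so the uniqueness of $\Gam=H_LBH_R$ (using that $H_L$ normalizes $B$) forces both factors to match. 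Extracting the $B$ parts gives $a\oplus(b\oplus c)=(a\oplus b)\oplus\alp_{k(a,b)}(c)$.

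It remains to identify the action of $\alp_{k(a,b)}$ on $B$. By Definition~\ref{def: Bigyration}, $\alp_{h_r(a,b)}=\lgyr{a,b}{}$, while by the bi-gyration inversion law (Theorem~\ref{thm: bigyrodecomposition inversion Law}), $\alp_{h_\ell(a,b)}=\irgyr{a,b}{}=\rgyr{b,a}{}$; hence, invoking commutativity of left and right gyrations (Theorem~\ref{thm: commuting relation conjugation and gyration automorphism}(1)),
\begin{equation*}
\alp_{k(a,b)}=\alp_{h_\ell(a,b)}\circ\alp_{h_r(a,b)}=\rgyr{b,a}{}\circ\lgyr{a,b}{}=\lgyr{a,b}{}\circ\rgyr{b,a}{},
\end{equation*}
which is exactly the left bi-gyroassociative law. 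For the right law I would substitute $c\mapsto(\lgyr{b,a}{}\circ\rgyr{a,b}{})(c)$ in the left law; by the inversion law and commutativity, $(\lgyr{a,b}{}\circ\rgyr{b,a}{})\circ(\lgyr{b,a}{}\circ\rgyr{a,b}{})=\id{B}$, so the outer composition collapses to the identity on $c$ and the right law drops out. The main obstacle is the uniqueness argument for the $B\cdot H_LH_R$ decomposition; the remaining steps are direct bookkeeping with the axioms and identities already in hand.
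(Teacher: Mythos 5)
Your argument is correct, and it takes a genuinely different route from the paper's. The paper stays inside the bi-gyrogroupoid $(B,\oplus_b)$: it starts from the identity $(a\oplus_b b)\oplus_b\lgyr{a,b}{c}=\rgyr{b,c}{a}\oplus_b(b\oplus_b c)$, applies suitable right gyrations to both sides, and reconciles the two resulting expressions by means of a separately proved lemma, $\rgyr{c,a\oplus b}{}\circ\rgyr{b,a}{}=\rgyr{b\oplus c,a}{}\circ\rgyr{c,b}{}$, together with the commuting, invariance, and inversion relations for bi-gyrations. You instead return to the ambient group and observe that the inversion axiom $h_\ell(b,a)=h_\ell(a,b)^{-1}$ turns Definition \ref{def: bi-gyrogroup operation} into $a\oplus b=h_\ell(a,b)(a\oplus_b b)h_\ell(a,b)^{-1}$, whence $ab=(a\oplus b)k(a,b)$ with $k(a,b)=h_\ell(a,b)h_r(a,b)\in H_LH_R$; expanding $a(bc)=(ab)c$ and invoking uniqueness of the coarser decomposition $\Gam=B\cdot(H_LH_R)$ (which you correctly justify from the bi-transversal uniqueness plus the fact that $H_L$ normalizes $B$) yields $a\oplus(b\oplus c)=(a\oplus b)\oplus\alp_{k(a,b)}(c)$, and $\alp_{k(a,b)}=\rgyr{b,a}{}\circ\lgyr{a,b}{}=\lgyr{a,b}{}\circ\rgyr{b,a}{}$ identifies the automorphism. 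Your derivation of the right law by substituting $c\mapsto(\lgyr{b,a}{}\circ\rgyr{a,b}{})(c)$ coincides with the paper's. What your approach buys is conceptual economy: it bypasses Lemma \ref{lem: bigyration relation in bigyrogroup} and the multi-step gyration bookkeeping, and it makes transparent that $(B,\oplus)$ is exactly the transversal groupoid of the single subgroup $H_LH_R$ in $\Gam$, with $\gyr{a,b}{}=\alp_{k(a,b)}$ — recovering the $\Gam=BH$ picture of Foguel--Ungar. What the paper's route buys is that it exercises, and stays consistent with, the internal bi-gyration calculus developed earlier in Section \ref{sec: bigyrodecomposition}, which is reused elsewhere.
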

\begin{proof}
From Theorem \ref{thm: bigyroassociative law}, we have $$(a\oplus_b
b)\oplus_b\lgyr{a, b}{c} = \rgyr{b, c}{a}\oplus_b (b\oplus_b c).$$
Applying $\rgyr{c, b}{}$ followed by applying $\rgyr{b\oplus c,
a}{}$ to the previous equation gives
\begin{equation}\label{eqn: in proof of thm bigyroassociative law in bigyrogroup}
(\rgyr{b\oplus c, a}{}\circ\rgyr{c, b}{})((a\oplus_b
b)\oplus_b\lgyr{a, b}{c}) = a\oplus(b\oplus c).
\end{equation}
On the other hand, we compute
\begin{align}\label{eqn: in proof of thm bigyroassociative law in bigyrogroup II}
{} &{} (a\oplus b)\oplus(\lgyr{a, b}{}\circ\rgyr{b, a}{})(c)\notag\\
{} &\hskip5pt = (a\oplus b)\oplus(\rgyr{b, a}{}\circ\lgyr{a, b}{})(c)\notag\\
{} &\hskip5pt= [\rgyr{b, a}{(a\oplus_b
b)}]\oplus[\rgyr{b,a}{(\lgyr{a,
b}{c})}]\notag\\
{} &\hskip5pt= \rgyr{b, a}{((a\oplus_b b)\oplus \lgyr{a, b}{c})}\notag\\
{} &\hskip5pt= (\rgyr{b, a}{}\circ\rgyr{\lgyr{a, b}{c}, a\oplus_b
b}{})((a\oplus_b b)\oplus_b \lgyr{a, b}{c})\notag\\
{} &\hskip5pt= (\rgyr{c, \rgyr{b, a}{(a\oplus_b b)}}{}\circ\rgyr{b,
a}{})((a\oplus_b b)\oplus_b \lgyr{a, b}{c})\notag\\
{} &\hskip5pt= (\rgyr{c, a\oplus b}{}\circ\rgyr{b, a}{})((a\oplus_b
b)\oplus_b \lgyr{a, b}{c}).
\end{align}
We obtain the first equation from Theorem \ref{thm: commuting
relation conjugation and gyration automorphism} \eqref{item: left
and right gyration commute}; the third equation from \eqref{eqn:
composite of right bigyration preserve +}; the fifth equation from
Identity \eqref{eqn: bigyrodecomposition, commuting relation of rho
and right gyration} and Theorem \ref{thm: bigyrodecomposition
inversion Law}.

\par By the lemma, $\rgyr{b\oplus c, a}{}\circ\rgyr{c, b}{} = \rgyr{c,
a\oplus b}{}\circ\rgyr{b, a}{}$. Hence, \eqref{eqn: in proof of thm
bigyroassociative law in bigyrogroup} and \eqref{eqn: in proof of
thm bigyroassociative law in bigyrogroup II} together imply
$a\oplus(b\oplus c) = (a\oplus b)\oplus(\lgyr{a, b}{}\circ\rgyr{b,
a}{})(c)$.
\par Replacing $c$ by $(\lgyr{b, a}{}\circ\rgyr{a, b}{})(c)$ in
\eqref{eqn: left bigyroassociative law in bigyrogroup} followed by
commuting $\lgyr{b, a}{}$ and $\rgyr{a, b}{}$ gives \eqref{eqn:
right bigyroassociative law in bigyrogroup}.
\end{proof}

\begin{thm}[Left gyration reduction property of bi-gyrogroups]\label{thm: left gyration reduction property II}
The bi-gyrogroup $B$ has the left gyration left reduction property
\begin{equation}\label{eqn: left gyration left reduction}
\lgyr{a, b}{} = \lgyr{a\oplus b, b}{}
\end{equation}
and the left gyration right reduction property
\begin{equation}\label{eqn: left gyration right reduction}
\lgyr{a, b}{} = \lgyr{a, b\oplus a}{}
\end{equation}
for all $a, b\in B$.
\end{thm}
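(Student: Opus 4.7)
The plan is to prove both identities by the same maneuver: use the bi-gyration invariant relation (Theorem \ref{thm: bigyration invariant relation}) to conjugate the left gyration arguments by a suitable right gyration so that the bi-gyrogroup operation $\oplus$ gets rewritten in terms of the bi-transversal operation $\oplus_b$, and then invoke the already established left gyration reduction property (Theorem \ref{thm: left gyration reduction property}). The whole point is that Definition \ref{def: bi-gyrogroup operation} gives $a\oplus b = \rgyr{b,a}{(a\oplus_b b)}$, so $\rgyr{a,b}{(a\oplus b)} = a\oplus_b b$ by the bi-gyration inversion law (Theorem \ref{thm: bigyrodecomposition inversion Law}), and similarly $\rgyr{b,a}{(b\oplus a)} = b\oplus_b a$.

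For \eqref{eqn: left gyration left reduction}, I would start from $\lgyr{a\oplus b, b}{}$ and apply Theorem \ref{thm: bigyration invariant relation} with $\rho=\rgyr{a, b}{}$ to obtain
$$
\lgyr{a\oplus b, b}{} = \lgyr{\rgyr{a,b}{(a\oplus b)}, \rgyr{a,b}{b}}{} = \lgyr{a\oplus_b b, \rgyr{a,b}{b}}{},
$$
the last equality following from the inversion-law reduction above. By Identity \eqref{eqn: left gyration reduction property II} of Theorem \ref{thm: left gyration reduction property}, the right-hand side equals $\lgyr{a,b}{}$, which completes the proof of the left reduction identity.

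For \eqref{eqn: left gyration right reduction} I would proceed symmetrically: starting from $\lgyr{a, b\oplus a}{}$ and applying Theorem \ref{thm: bigyration invariant relation} with $\rho=\rgyr{b,a}{}$,
$$
\lgyr{a, b\oplus a}{} = \lgyr{\rgyr{b,a}{a}, \rgyr{b,a}{(b\oplus a)}}{} = \lgyr{\rgyr{b,a}{a}, b\oplus_b a}{},
$$
which equals $\lgyr{a,b}{}$ by Identity \eqref{eqn: left gyration reduction property I} of Theorem \ref{thm: left gyration reduction property}.

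There is no real obstacle here once one notices that the two identities \eqref{eqn: left gyration reduction property I} and \eqref{eqn: left gyration reduction property II} are precisely the $\oplus_b$ analogues of what we want, and that a single application of the right-gyration invariance is enough to translate between $\oplus$ and $\oplus_b$. The only point requiring a moment of care is the direction in which the inversion law is used: one must apply $\rgyr{a,b}{}$ (not $\rgyr{b,a}{}$) in the first case and $\rgyr{b,a}{}$ in the second, so that the composition $\rgyr{a,b}{}\circ\rgyr{b,a}{} = \id{B}$ cancels correctly.
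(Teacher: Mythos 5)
Your proof is correct and is essentially the paper's own argument: both rest on Theorem \ref{thm: left gyration reduction property}, the bi-gyration invariant relation (Theorem \ref{thm: bigyration invariant relation}), and the inversion law (Theorem \ref{thm: bigyrodecomposition inversion Law}). The only cosmetic difference is that you run the chain of equalities in the reverse direction, so your $\rho$ is the inverse of the paper's ($\rgyr{a,b}{}$ versus $\rgyr{b,a}{}$ for the first identity, and vice versa for the second), which changes nothing.
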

\begin{proof}
From \eqref{eqn: left gyration reduction property II}, \eqref{eqn:
left gyration invariant relation} with $\rho = \rgyr{b, a}{}$, and
Theorem \ref{thm: bigyrodecomposition inversion Law}, we have the
following series of equations
\begin{align}
\lgyr{a, b}{} &= \lgyr{a\oplus_b b, \rgyr{a, b}{b}}{}\notag\\
{} &= \lgyr{\rgyr{b, a}{(a\oplus_b b)}, \rgyr{b, a}{(\rgyr{a,
b}{b})}}{}\notag\\
{} &= \lgyr{a\oplus b, b}{},\notag
\end{align}
thus proving \eqref{eqn: left gyration left reduction}. One obtains
similarly that
\begin{align}
\lgyr{a, b}{} &= \lgyr{\rgyr{b, a}{a}, b\oplus_b a}{}\notag\\
{} &= \lgyr{\rgyr{a, b}{(\rgyr{b, a}{a})}, \rgyr{a, b}{(b\oplus_b
a)}}{}\notag\\
{} &= \lgyr{a, b\oplus a}{}.\notag
\end{align}
\end{proof}

\begin{thm}[Right gyration reduction property of bi-gyrogroups]\label{thm: right gyration reduction property II}
The bi-gyrogroup $B$ satisfies the right gyration left reduction
property
\begin{equation}\label{eqn: right gyration left reduction}
\rgyr{a, b}{} = \rgyr{a\oplus b, b}{}
\end{equation}
and the right gyration right reduction property
\begin{equation}\label{eqn: right gyration right reduction}
\rgyr{a, b}{} = \rgyr{a, b\oplus a}{}
\end{equation}
for all $a, b\in B$.
\end{thm}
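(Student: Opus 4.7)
The plan is to prove the left reduction \eqref{eqn: right gyration left reduction} first and then deduce the right reduction \eqref{eqn: right gyration right reduction} from it by the familiar ``inversion + swap'' move---taking the map inverse of both sides, invoking the bi-gyration inversion law (Theorem \ref{thm: bigyrodecomposition inversion Law}), and then interchanging $a$ and $b$. This is exactly how the second halves of Theorems \ref{thm: left gyration reduction property II}, \ref{thm: right gyration reduction property}, and \ref{thm: bigyration reduction property} were obtained.

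The naive attempt at \eqref{eqn: right gyration left reduction} is to mirror the proof of \eqref{eqn: left gyration left reduction}: begin from \eqref{eqn: right gyration reduction property II}, $\rgyr{a, b}{} = \rgyr{b\oplus_b a, \lgyr{b, a}{b}}{}$, and apply \eqref{eqn: right gyration invariant relation} with $\lamb = \lgyr{a, b}{}$ so that the bi-gyration inversion law collapses the second slot to $b$, yielding $\rgyr{a,b}{} = \rgyr{\lgyr{a,b}{(b\oplus_b a)},\, b}{}$. The snag is that the resulting first slot $\lgyr{a,b}{(b\oplus_b a)}$ need not equal $a\oplus b = \rgyr{b, a}{(a\oplus_b b)}$---the definition of $\oplus$ twists the bi-transversal sum by a \emph{right} gyration, not a left one, and the putative equality $\lgyr{a,b}{(b\oplus_b a)} = \rgyr{b,a}{(a\oplus_b b)}$ is a form of bi-gyrocommutativity that is not in force. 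This is the main obstacle.

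To get around it, I would argue directly in the ambient group $\Gam$, using the fact that $\rgyr{b, a}{} = \alp_{h_\ell(a, b)}$ (from Definition \ref{def: Bigyration} and $h_\ell(b, a) = h_\ell(a, b)^{-1}$) to write $a\oplus b = h_\ell(a, b)(a\oplus_b b) h_\ell(a, b)^{-1}$. Computing the product $(a\oplus b)\cdot b$ in $\Gam$, pushing $h_\ell(a, b)^{-1}$ past $b$ (which produces an $\rgyr{a, b}{b}$ factor by Definition \ref{def: Bigyration}), and decomposing the $B$--$B$ product $(a\oplus_b b)\,\rgyr{a, b}{b}$ via \eqref{eqn: Unique decomposition}, careful commutation of $h_\ell(a, b)^{\pm 1}$ through $H_R$ and through $B$ yields the identification
\[
h_\ell(a\oplus b, b) \;=\; h_\ell(a, b)\; h_\ell(a\oplus_b b,\; \rgyr{a, b}{b})\; h_\ell(a, b)^{-1}.
\]
Passing to conjugation automorphisms via $\rgyr{x, y}{} = \alp_{h_\ell(x, y)^{-1}}$ and inserting \eqref{eqn: bigyration reduction property II} in the form $\rgyr{a\oplus_b b,\,\rgyr{a, b}{b}}{} = \rgyr{a, b}{}$, the $h_\ell(a, b)$-sandwich collapses by a second use of the bi-gyration inversion law to give $\rgyr{a\oplus b, b}{} = \rgyr{b, a}{}\circ\rgyr{a, b}{}\circ\rgyr{a, b}{} = \rgyr{a, b}{}$. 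The step that most demands care is the bookkeeping of the $H_L$--$H_R$ and $H_L$--$B$ commutations supplied by Definition \ref{def: Bigyrotrasversal}.
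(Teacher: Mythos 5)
Your argument is correct and is essentially the paper's proof: both derive \eqref{eqn: right gyration left reduction} by conjugating the identity $\rgyr{a, b}{} = \rgyr{a\oplus_b b, \rgyr{a, b}{b}}{}$ of \eqref{eqn: bigyration reduction property II} by $\rgyr{b, a}{}$ and cancelling via the bi-gyration inversion law, and both obtain \eqref{eqn: right gyration right reduction} by inversion followed by swapping $a$ and $b$. The only difference is that your ambient-group computation of $h_\ell(a\oplus b, b)$ re-derives by hand the instance $\rgyr{a\oplus b, b}{} = \rgyr{b, a}{}\circ\rgyr{a\oplus_b b, \rgyr{a, b}{b}}{}\circ\rgyr{a, b}{}$ of the already-proved commuting relation \eqref{eqn: bigyrodecomposition, commuting relation of rho and right gyration} with $\rho = \rgyr{b, a}{}$, which the paper simply cites; your diagnosis of why the naive mirror of the left-gyration proof fails is precisely the reason the commuting relation, rather than the invariance relation, is the right tool here.
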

\begin{proof}
From \eqref{eqn: bigyration reduction property II}, \eqref{eqn:
bigyrodecomposition, commuting relation of rho and right gyration}
with $\rho = \rgyr{b, a}{}$, and Theorem \ref{thm:
bigyrodecomposition inversion Law}, we have the following series of
equations
\begin{eqnarray}\label{eqn: in proof theorem right gyration reduction property}
\begin{split}
\id{B} &= \rgyr{b, a}{}\circ\rgyr{a\oplus_b b, \rgyr{a, b}{b}}{}\\
{} &= \rgyr{\rgyr{b, a}{(a\oplus_b b)}, \rgyr{b, a}{(\rgyr{a,
b}{b})}}{}\circ\rgyr{b, a}{}\\
{} &= \rgyr{a\oplus b, b}{}\circ\rgyr{b, a}{}.
\end{split}
\end{eqnarray}
Hence, the extreme sides of \eqref{eqn: in proof theorem right
gyration reduction property} imply $\rgyr{a, b}{} = \rgyr{a\oplus b,
b}{}$. Applying the bi-gyration inversion law to \eqref{eqn: right
gyration left reduction} followed by interchanging $a$ and $b$ gives
\eqref{eqn: right gyration right reduction}.
\end{proof}

\par Let $(B, \oplus)$ be the corresponding bi-gyrogroup of a
bi-gyrodecomposition $\Gam = H_LBH_R$. By Theorem \ref{thm:
automorphism of bigyrogroup operation}, left and right gyrations of
$(B, \oplus_b)$ preserve the bi-gyrogroup operation. This result and
Theorem \ref{thm: bigyroassociative law in bigyrogroup} motivate the
following definition.

\begin{defn}[Gyration of bi-gyrogroups]\label{def: gyrations of bigyrogroup}
Let $\Gam = H_LBH_R$ be a bi-gyrodecomposition and let $(B,\oplus)$
be the corresponding bi-gyrogroup. The \textit{gyrator} is the map
$$\mathrm{gyr}\colon B\times B\to \aut{B, \oplus}$$
defined by
\begin{equation}\label{eqn: gyrator of bigyrogroup}
\gyr{a, b}{} = \lgyr{a, b}{}\circ\rgyr{b, a}{}
\end{equation}
for all $a, b\in B$.
\end{defn}

\begin{thm}\label{thm: gyrations of bigyrogroup}
For all $a, b\in B$, $\gyr{a, b}{}$ is an automorphism of the
bi-gyrogroup $B$.
\end{thm}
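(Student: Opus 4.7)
The plan is to derive the statement directly from results already established for the constituent left and right gyrations, since by Definition \ref{def: gyrations of bigyrogroup} the gyrator is defined as the composition $\gyr{a,b}{} = \lgyr{a,b}{}\circ\rgyr{b,a}{}$. So the task reduces to verifying the two defining properties of an automorphism of $(B,\oplus)$: bijectivity on $B$, and preservation of the bi-gyrogroup operation $\oplus$.

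For bijectivity, I would invoke Corollary \ref{cor: bigyration is automorphism}, which states that each of $\lgyr{a,b}{}$ and $\rgyr{b,a}{}$ is an automorphism (in particular a bijection) of the bi-transversal groupoid $(B,\odot)$; since this is the same underlying set $B$, each is a bijection from $B$ to itself, and hence so is their composition.

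For preservation of $\oplus$, I would apply Theorem \ref{thm: automorphism of bigyrogroup operation}. Viewing $\rgyr{b,a}{}$ as a ``finite composition of right gyrations of $(B,\oplus_b)$'' (namely a one-fold composition) and $\lgyr{a,b}{}$ as a ``finite composition of left gyrations of $(B,\oplus_b)$'', the theorem gives
\begin{equation*}
\rgyr{b,a}{(x\oplus y)} = \rgyr{b,a}{x}\oplus\rgyr{b,a}{y}\quad\text{and}\quad \lgyr{a,b}{(u\oplus v)} = \lgyr{a,b}{u}\oplus\lgyr{a,b}{v}
\end{equation*}
for all $x,y,u,v\in B$. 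Composing these two identities yields $\gyr{a,b}{(x\oplus y)} = \gyr{a,b}{x}\oplus\gyr{a,b}{y}$, so $\gyr{a,b}{}\in\aut{B,\oplus}$, completing the proof.

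There is no real obstacle here: both required ingredients have already been done in the immediately preceding results, and the argument is a one-line composition. The only subtle point worth flagging is that Theorem \ref{thm: automorphism of bigyrogroup operation} must be invoked twice, once for a left-gyration composite and once for a right-gyration composite, before composing; it is not obtained from a single application.
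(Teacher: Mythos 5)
Your proposal is correct and follows essentially the same route as the paper, whose proof simply cites Theorem \ref{thm: automorphism of bigyrogroup operation}; you merely make explicit the (correct) point that the theorem must be applied separately to the left-gyration factor and the right-gyration factor before composing, and that bijectivity comes from Corollary \ref{cor: bigyration is automorphism}. No gaps.
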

\begin{proof}
The theorem follows from Theorem \ref{thm: automorphism of
bigyrogroup operation}.
\end{proof}

\begin{thm}[Gyroassociative law in bi-gyrogroups]\label{thm: gyroassociative law in bigyrogroup}
The bi-gyrogroup $B$ satisfies the left gyroassociative law
\begin{equation}\label{eqn: left gyroassociative law in bigyrogroup}
a\oplus(b\oplus c) = (a\oplus b)\oplus\gyr{a, b}{c}
\end{equation}
and the right gyroassociative law
\begin{equation}\label{eqn: right gyroassociative law in bigyrogroup}
(a\oplus b)\oplus c = a\oplus (b\oplus\gyr{b, a}{c})
\end{equation}
for all $a, b, c\in B$.
\end{thm}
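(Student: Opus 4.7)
The plan is to obtain both identities by direct substitution, using the already-established left and right bi-gyroassociative laws of Theorem \ref{thm: bigyroassociative law in bigyrogroup} together with Definition \ref{def: gyrations of bigyrogroup} of the gyrator. Recall that by definition $\gyr{a, b}{} = \lgyr{a, b}{}\circ\rgyr{b, a}{}$, so for any $a, b\in B$ the composite $\lgyr{a, b}{}\circ\rgyr{b, a}{}$ that appears on the right-hand side of \eqref{eqn: left bigyroassociative law in bigyrogroup} is literally $\gyr{a, b}{}$; substituting this notation in produces \eqref{eqn: left gyroassociative law in bigyrogroup}.

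For the right gyroassociative law, I would start from \eqref{eqn: right bigyroassociative law in bigyrogroup}, namely $(a\oplus b)\oplus c = a\oplus(b\oplus(\lgyr{b, a}{}\circ\rgyr{a, b}{})(c))$. Applying the gyrator definition with the pair $(b, a)$ in place of $(a, b)$ gives $\gyr{b, a}{} = \lgyr{b, a}{}\circ\rgyr{a, b}{}$, so the inner composite is precisely $\gyr{b, a}{}$, yielding \eqref{eqn: right gyroassociative law in bigyrogroup}.

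There is no genuine obstacle here: the substantive content, namely the bi-gyroassociative identities themselves, was already carried out in Theorem \ref{thm: bigyroassociative law in bigyrogroup}, and the present theorem is merely a repackaging of those identities in the more compact gyrator notation. The only point worth double-checking is the consistency between the order of arguments in the gyrator and the order of the left and right gyrations inside its defining composite; this is what makes the right gyroassociative law feature $\gyr{b,a}{}$ rather than $\gyr{a,b}{}$, and it is forced by the fact that \eqref{eqn: right bigyroassociative law in bigyrogroup} has $\lgyr{b,a}{}$ (not $\lgyr{a,b}{}$) on its outside.
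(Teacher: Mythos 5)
Your proposal is correct and matches the paper's own proof, which likewise derives both identities by substituting the gyrator definition $\gyr{a,b}{}=\lgyr{a,b}{}\circ\rgyr{b,a}{}$ into the left and right bi-gyroassociative laws of Theorem \ref{thm: bigyroassociative law in bigyrogroup}. Your remark about the argument order in $\gyr{b,a}{}$ for the right law is exactly the right consistency check.
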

\begin{proof}
The theorem follows directly from Theorem \ref{thm:
bigyroassociative law in bigyrogroup} and Definition \ref{def:
gyrations of bigyrogroup}.
\end{proof}

\begin{thm}[Gyration reduction property in bi-gyrogroups]\label{thm: gyration reduction property in bigyrogroup}
The bi-gyrogroup $B$ has the left reduction property
\begin{equation}\label{eqn: left reduction property in bigyrogroup}
\gyr{a, b}{} = \gyr{a\oplus b, b}{}
\end{equation}
and the right reduction property
\begin{equation}\label{eqn: right reduction property in bigyrogroup}
\gyr{a, b}{} = \gyr{a, b\oplus a}{}
\end{equation}
for all $a, b\in B$.
\end{thm}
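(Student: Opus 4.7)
The plan is to expand $\gyr{a,b}{}$ via its definition $\gyr{a,b}{} = \lgyr{a,b}{}\circ\rgyr{b,a}{}$ (Definition \ref{def: gyrations of bigyrogroup}) and then apply the separate reduction properties for left and right gyrations that have already been established in Theorems \ref{thm: left gyration reduction property II} and \ref{thm: right gyration reduction property II}. Since $\gyr{a,b}{}$ is a composite of a left gyration in the first slot and a right gyration with swapped arguments in the second slot, the corresponding reductions from these two theorems should combine coherently.

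For the left reduction \eqref{eqn: left reduction property in bigyrogroup}, I would use the left gyration \emph{left} reduction $\lgyr{a,b}{} = \lgyr{a\oplus b, b}{}$ from \eqref{eqn: left gyration left reduction} on the left factor, together with the right gyration \emph{right} reduction $\rgyr{b,a}{} = \rgyr{b, a\oplus b}{}$, which is exactly \eqref{eqn: right gyration right reduction} applied with the roles of $a$ and $b$ interchanged. Composing these gives $\gyr{a,b}{} = \lgyr{a\oplus b, b}{}\circ\rgyr{b, a\oplus b}{} = \gyr{a\oplus b, b}{}$.

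For the right reduction \eqref{eqn: right reduction property in bigyrogroup}, I would use the left gyration \emph{right} reduction $\lgyr{a,b}{} = \lgyr{a, b\oplus a}{}$ from \eqref{eqn: left gyration right reduction} on the left factor, and the right gyration \emph{left} reduction $\rgyr{b,a}{} = \rgyr{b\oplus a, a}{}$ from \eqref{eqn: right gyration left reduction} (again read with swapped arguments) on the right factor. Composing yields $\gyr{a,b}{} = \lgyr{a, b\oplus a}{}\circ\rgyr{b\oplus a, a}{} = \gyr{a, b\oplus a}{}$.

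There is essentially no obstacle here: the theorem is a direct corollary of the previously proved left and right gyration reduction properties once one unravels the definition of the gyrator. The only thing to be careful about is to match the two slots of the left gyration with the two slots of the (argument-swapped) right gyration properly, so that the same pair $(a\oplus b, b)$ (respectively $(a, b\oplus a)$) appears in both factors after reduction, allowing them to be repackaged as $\gyr{a\oplus b, b}{}$ (respectively $\gyr{a, b\oplus a}{}$).
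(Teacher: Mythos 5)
Your proof is correct and follows exactly the paper's own argument: expand $\gyr{a,b}{}=\lgyr{a,b}{}\circ\rgyr{b,a}{}$ and apply \eqref{eqn: left gyration left reduction} together with \eqref{eqn: right gyration right reduction} (arguments swapped) for the left reduction, and \eqref{eqn: left gyration right reduction} together with \eqref{eqn: right gyration left reduction} for the right reduction. The slot-matching you describe is precisely what the paper does.
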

\begin{proof}
From \eqref{eqn: left gyration left reduction} and \eqref{eqn: right
gyration right reduction}, we have the following series of equations
\begin{align}
\gyr{a\oplus b, b}{} &= \lgyr{a\oplus b, b}{}\circ\rgyr{b, a\oplus
b}{}\notag\\
{} &= \lgyr{a,b}{}\circ\rgyr{b, a}{}\notag\\
{} &= \gyr{a, b}{},\notag
\end{align}
thus proving \eqref{eqn: left reduction property in bigyrogroup}.
Similarly, \eqref{eqn: left gyration right reduction} and
\eqref{eqn: right gyration left reduction} together imply
\eqref{eqn: right reduction property in bigyrogroup}.
\end{proof}

\par Theorems \ref{thm: gyroassociative law in bigyrogroup} and
\ref{thm: gyration reduction property in bigyrogroup} indicate that
any bi-gyrogroup is indeed a gyrogroup. Therefore, we recall the
following definition of a gyrogroup.

\begin{defn}[Gyrogroup, \cite{AU2008AHG}]\label{def: gyrogroup}  A groupoid $(G, \oplus)$ is a
\textit{gyrogroup} if its binary operation satisfies the following
axioms.
\begin{enumerate}
    \item [(G1)] There is an element $0\in G$ such that $0\oplus a =
    a$ for all $a\in G$.
    \item [(G2)] For each $a\in G$, there is an element $b\in G$ such that $b\oplus a = 0$.
    \item [(G3)] For all $a, b$ in $G$, there is an automorphism $\gyr{a,b}{}\in \aut{G,\oplus}$ such that
    $$ a\oplus (b\oplus c) = (a\oplus b)\oplus\gyr{a,
    b}{c}
    $$
    for all $c\in G$.
    \item [(G4)] For all $a, b$ in $G$, $\gyr{a, b}{} = \gyr{a\oplus b, b}{}$.
\end{enumerate}
\end{defn}

\begin{defn}[Gyrocommutative gyrogroup, \cite{AU2008AHG}]\label{def: gyrocommutative gyrogroup}
A gyrogroup $(G,\oplus)$ is \textit{gyrocommutative} if it satisfies
the gyrocommutative law
$$a\oplus b = \gyr{a, b}({b\oplus a})$$
for all $a, b\in G$ .
\end{defn}

\begin{thm}\label{thm: bigyrogroup is a gyrogroup}
Let $\Gam = H_LBH_R$ be a bi-gyrodecomposition and let $(B, \oplus)$
be the corresponding bi-gyrogroup. Then $B$ equipped with the
bi-gyrogroup operation forms a gyrogroup.
\end{thm}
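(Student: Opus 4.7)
The plan is to verify the four gyrogroup axioms (G1)--(G4) of Definition \ref{def: gyrogroup} one at a time, each by directly citing a result that has already been established for the bi-gyrogroup $(B,\oplus)$ earlier in the section. Since the bi-gyrogroup operation $\oplus$ is defined from the bi-transversal operation $\oplus_b$ by a right gyration twist (Definition \ref{def: bi-gyrogroup operation}), every axiom has already been essentially proved; the theorem is an assembly statement.

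First I would handle (G1) and (G2). Proposition \ref{prop: identity and inverse in bigyrogroup} shows that the group identity $0 \in \Gam$ is a two-sided identity in $(B,\oplus)$, which in particular gives a left identity, and that $\ominus_b a$ is a two-sided inverse of $a$ with respect to $\oplus$, which in particular gives a left inverse. So (G1) and (G2) are immediate. Then for (G3) I would invoke Theorem \ref{thm: gyrations of bigyrogroup}, which guarantees that for each pair $a,b \in B$ the map $\gyr{a,b}{} = \lgyr{a,b}{} \circ \rgyr{b,a}{}$ is an automorphism of $(B,\oplus)$, together with the left gyroassociative law from Theorem \ref{thm: gyroassociative law in bigyrogroup}, which is exactly the identity required by (G3).

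Finally, for (G4) I would simply cite the left reduction property $\gyr{a,b}{} = \gyr{a\oplus b, b}{}$ already established as Theorem \ref{thm: gyration reduction property in bigyrogroup}. Thus $(B,\oplus)$ satisfies all four defining axioms and is a gyrogroup.

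There is no real obstacle here: the entire content of the theorem has been arranged piecewise through Sections \ref{sec: bitransversal} and \ref{sec: bigyrodecomposition}, and the final statement is a bookkeeping step that names the structure. The only thing worth being careful about is to use the $\oplus$-versions (not the $\oplus_b$-versions) of identity, inverse, associativity, and reduction, all of which were explicitly proved for $\oplus$ in the preceding sequence of theorems. So the proof will be a short paragraph that references Proposition \ref{prop: identity and inverse in bigyrogroup} for (G1) and (G2), Theorems \ref{thm: gyrations of bigyrogroup} and \ref{thm: gyroassociative law in bigyrogroup} for (G3), and Theorem \ref{thm: gyration reduction property in bigyrogroup} for (G4).
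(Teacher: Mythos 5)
Your proposal is correct and matches the paper's proof exactly: the paper likewise validates (G1)--(G2) by Proposition \ref{prop: identity and inverse in bigyrogroup}, (G3) by Theorems \ref{thm: gyrations of bigyrogroup} and \ref{thm: gyroassociative law in bigyrogroup}, and (G4) by Theorem \ref{thm: gyration reduction property in bigyrogroup}. Nothing is missing.
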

\begin{proof}
Axioms (G1) and (G2) are validated in Proposition \ref{prop:
identity and inverse in bigyrogroup}. Axiom (G3) is validated in
Theorems \ref{thm: gyrations of bigyrogroup} and \ref{thm:
gyroassociative law in bigyrogroup}. Axiom (G4) is validated in
Theorem \ref{thm: gyration reduction property in bigyrogroup}.
\end{proof}

\begin{defn}\label{def: bigyrocommutative bigyrodecomposition}
A bi-gyrodecomposition $\Gam = H_LBH_R$ is
\textit{bi-gyrocommutative} if its bi-transversal groupoid is
bi-gyrocommutative in the sense of Definition \ref{def:
Bi-gyrocommutative bi-groupoid}.
\end{defn}

\begin{thm}\label{thm: bigyrocommutative bigyrogroup is a gyrocomm. gyrogroup}
If $\Gam = H_LBH_R$ is a bi-gyrocommutative bi-gyrodecomposition,
then $B$ equipped with the bi-gyrogroup operation is a
gyrocommutative gyrogroup.
\end{thm}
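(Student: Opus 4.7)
The plan is to reduce this to a direct verification of the gyrocommutative law, since by Theorem \ref{thm: bigyrogroup is a gyrogroup} we already know that $B$ equipped with $\oplus$ is a gyrogroup. Thus only axiom \eqref{eqn: bigyrocommutative law}-style statement for $\oplus$ and $\gyr{\cdot,\cdot}{}$ remains to be checked, namely $a\oplus b = \gyr{a,b}{(b\oplus a)}$ for all $a,b\in B$.

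First I would unfold $a\oplus b$ using Definition \ref{def: bi-gyrogroup operation} to get $a\oplus b = \rgyr{b,a}{(a\oplus_b b)}$. Next, invoking the bi-gyrocommutativity of the bi-transversal groupoid (Definition \ref{def: Bi-gyrocommutative bi-groupoid}, as inherited through Definition \ref{def: bigyrocommutative bigyrodecomposition}), I rewrite $a\oplus_b b = (\lgyr{a,b}{}\circ\rgyr{a,b}{})(b\oplus_b a)$, so that
\[
a\oplus b \;=\; \rgyr{b,a}{}\bigl(\lgyr{a,b}{}(\rgyr{a,b}{(b\oplus_b a)})\bigr).
\]

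The key simplification comes from re-expressing $b\oplus_b a$ in terms of $b\oplus a$. Definition \ref{def: bi-gyrogroup operation} gives $b\oplus a = \rgyr{a,b}{(b\oplus_b a)}$, so the bi-gyration inversion law (Theorem \ref{thm: bigyrodecomposition inversion Law}) yields $b\oplus_b a = \irgyr{a,b}{(b\oplus a)} = \rgyr{b,a}{(b\oplus a)}$. Consequently $\rgyr{a,b}{(b\oplus_b a)} = (\rgyr{a,b}{}\circ\rgyr{b,a}{})(b\oplus a) = b\oplus a$, again by the bi-gyration inversion law. Substituting this back,
\[
a\oplus b \;=\; \rgyr{b,a}{}\bigl(\lgyr{a,b}{(b\oplus a)}\bigr) \;=\; (\rgyr{b,a}{}\circ\lgyr{a,b}{})(b\oplus a).
\]

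Finally, I would use Theorem \ref{thm: commuting relation conjugation and gyration automorphism}\eqref{item: left and right gyration commute}, which asserts that any left gyration commutes with any right gyration, to swap the factors and obtain $(\lgyr{a,b}{}\circ\rgyr{b,a}{})(b\oplus a)$. By Definition \ref{def: gyrations of bigyrogroup}, this is precisely $\gyr{a,b}{(b\oplus a)}$, establishing the gyrocommutative law. I do not anticipate any real obstacle: all the machinery—the inversion law, the commutativity of left and right gyrations, the relation between $\oplus$ and $\oplus_b$—has already been set up, so the proof is essentially a chain of substitutions with one cancellation via inversion and one swap via commutativity.
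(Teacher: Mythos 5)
Your proposal is correct and follows essentially the same route as the paper: unfold $a\oplus b = \rgyr{b,a}{(a\oplus_b b)}$, apply the bi-gyrocommutativity of $(B,\oplus_b)$, identify $\rgyr{a,b}{(b\oplus_b a)}$ with $b\oplus a$, and commute the left and right gyrations to recognize $\gyr{a,b}{}$. Your detour through the bi-gyration inversion law to recover $b\oplus a$ is unnecessary (it is immediate from Definition \ref{def: bi-gyrogroup operation}) but harmless.
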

\begin{proof}
Let $a, b\in B$. We compute
\begin{align}
a\oplus b &= \rgyr{b, a}{(a\oplus_b b)}\notag\\
{} &= \rgyr{b, a}{(\lgyr{a, b}{}\circ\rgyr{a, b}{(b\oplus_b
a)})}\notag\\
{} &= (\lgyr{a, b}{}\circ\rgyr{b, a})(\rgyr{a, b}{(b\oplus_b
a)})\notag\\
{} &= \gyr{a, b}{(b\oplus a)},\notag
\end{align}
thus proving $B$ satisfies the gyrocommutative law.
\end{proof}

\par We close this section by proving that having a bi-gyrodecomposition
is an invariant property of groups.

\begin{thm}\label{thm: bigyrodecomposition as invariant}
Let $\Gam_1$ and $\Gam_2$ be isomorphic groups via an isomorphism
$\phi$. If $\Gam_1 = H_LBH_R$ is a bi-gyrodecomposition, then so is
$\Gam_2 = \phi(H_L)\phi(B)\phi(H_r)$.
\end{thm}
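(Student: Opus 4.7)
The plan is to verify directly that $\Gamma_2 = \phi(H_L)\phi(B)\phi(H_R)$ satisfies the three conditions of Definition~\ref{def: Bi-gyrodecomposition}: namely, that $\phi(B)$ is a bi-gyrotransversal of $\phi(H_L)$ and $\phi(H_R)$ in $\Gamma_2$, that $\phi(B)$ is a twisted subgroup of $\Gamma_2$, and that the left and right transversal maps of the decomposition $\Gamma_2 = \phi(H_L)\phi(B)\phi(H_R)$ satisfy the inversion condition $h_\ell'(a', b')^{-1} = h_\ell'(b', a')$ and $h_r'(a', b')^{-1} = h_r'(b', a')$. The guiding observation is that since $\phi$ is a bijective group homomorphism, it transports every set-theoretic and multiplicative relation in $\Gamma_1$ to the corresponding relation in $\Gamma_2$, so the verification reduces to applying $\phi$ to the known facts about $\Gamma_1 = H_L B H_R$.

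To check that $\phi(B)$ is a bi-transversal of $\phi(H_L)$ and $\phi(H_R)$, I would take an arbitrary $g_2 \in \Gamma_2$, write $g_2 = \phi(g_1)$ for a unique $g_1 \in \Gamma_1$, decompose $g_1 = h_\ell b h_r$ uniquely in $\Gamma_1$, and then read off $g_2 = \phi(h_\ell)\phi(b)\phi(h_r)$; the uniqueness on the $\Gamma_2$ side is immediate from the injectivity of $\phi$ applied to any two such decompositions. Next, the normalization condition $\phi(H_L)\phi(B)\phi(H_L)^{-1} \subseteq \phi(B)$ and the analog for $\phi(H_R)$ follow by applying $\phi$ to $H_L B H_L^{-1} \subseteq B$ and $H_R B H_R^{-1}\subseteq B$, while the commutativity $\phi(h_\ell)\phi(h_r) = \phi(h_r)\phi(h_\ell)$ follows from the corresponding identity in $\Gamma_1$. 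Together these establish that $\phi(B)$ is a bi-gyrotransversal.

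For the twisted subgroup condition, the three defining properties transfer directly: $1 = \phi(1) \in \phi(B)$; for $\phi(b) \in \phi(B)$ we have $\phi(b)^{-1} = \phi(b^{-1}) \in \phi(B)$; and $\phi(a)\phi(b)\phi(a) = \phi(aba) \in \phi(B)$. The key step for the third condition is to identify the transversal maps of $\Gamma_2$ in terms of those of $\Gamma_1$. Given $a, b \in B$, applying $\phi$ to the identity $ab = h_\ell(a,b)(a\odot b)h_r(a,b)$ yields
\begin{equation*}
\phi(a)\phi(b) = \phi(h_\ell(a,b))\,\phi(a\odot b)\,\phi(h_r(a,b)),
\end{equation*}
so by the uniqueness of the bi-transversal decomposition in $\Gamma_2$, the transversal maps $h_\ell'$ and $h_r'$ of $\Gamma_2$ satisfy $h_\ell'(\phi(a),\phi(b)) = \phi(h_\ell(a,b))$ and $h_r'(\phi(a),\phi(b)) = \phi(h_r(a,b))$.

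Using this identification, the inversion condition follows at once: for any $a, b \in B$,
\begin{equation*}
h_\ell'(\phi(a),\phi(b))^{-1} = \phi(h_\ell(a,b))^{-1} = \phi(h_\ell(a,b)^{-1}) = \phi(h_\ell(b,a)) = h_\ell'(\phi(b),\phi(a)),
\end{equation*}
and similarly for $h_r'$. I do not anticipate any genuine obstacle here; the only mild subtlety is recognizing that all of the structure of a bi-gyrodecomposition is expressed purely in terms of group multiplication, inversion, and set membership, all of which are preserved under the isomorphism $\phi$. Thus the bookkeeping of checking the three conditions and the transfer of the transversal maps constitutes the entire argument.
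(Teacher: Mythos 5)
Your proposal is correct and is precisely the argument the paper intends: the paper's own proof consists only of the remark that the claim is ``straightforward, using the fact that $\phi$ is a group isomorphism,'' and your verification of the three conditions of Definition~\ref{def: Bi-gyrodecomposition} together with the identification $h_\ell'(\phi(a),\phi(b)) = \phi(h_\ell(a,b))$ and $h_r'(\phi(a),\phi(b)) = \phi(h_r(a,b))$ supplies exactly the omitted details. No gaps.
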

\begin{proof}
The proof of this theorem is straightforward, using the fact that
$\phi$ is a group isomorphism from $\Gam_1$ to $\Gam_2$.
\end{proof}

\begin{thm}\label{thm: bigyrocomm. bigyrodecomposition as invariant}
Let $\Gam_1$ and $\Gam_2$ be isomorphic groups via an isomorphism
$\phi$. If $\Gam_1 = H_LBH_R$ is a bi-gyrocommutative
bi-gyrodecomposition, then so is $\Gam_2 =
\phi(H_L)\phi(B)\phi(H_r)$.
\end{thm}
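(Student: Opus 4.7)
The plan is to leverage the preceding Theorem, which already supplies that $\Gam_2=\phi(H_L)\phi(B)\phi(H_R)$ is a bi-gyrodecomposition, so only the bi-gyrocommutativity needs a separate verification. The strategy is to show that $\phi$ transports every structural ingredient of $(B,\odot)$ to the analogous ingredient of $(\phi(B),\odot')$, and then to apply $\phi$ to the bi-gyrocommutative identity in $B$.

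First I would set up the intertwining relations. Denote by $h_\ell,h_r$ the left and right transversal maps arising from $\Gam_1=H_LBH_R$, and by $h'_\ell,h'_r$ those arising from $\Gam_2=\phi(H_L)\phi(B)\phi(H_R)$. For $a,b\in B$, apply $\phi$ to the defining equation $ab=h_\ell(a,b)(a\odot b)h_r(a,b)$ to obtain $\phi(a)\phi(b)=\phi(h_\ell(a,b))\phi(a\odot b)\phi(h_r(a,b))$. Since $\phi(h_\ell(a,b))\in\phi(H_L)$, $\phi(a\odot b)\in\phi(B)$, and $\phi(h_r(a,b))\in\phi(H_R)$, the uniqueness of the bi-transversal decomposition in $\Gam_2$ yields
\[
h'_\ell(\phi(a),\phi(b))=\phi(h_\ell(a,b)),\quad \phi(a)\odot'\phi(b)=\phi(a\odot b),\quad h'_r(\phi(a),\phi(b))=\phi(h_r(a,b)).
\]

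Next I would transport the bi-gyrations. Using $\alpha_{\phi(h)}\circ\phi=\phi\circ\alpha_h$ for any $h\in\Gam_1$ (which is a direct consequence of $\phi$ being a group homomorphism) together with the identities above and Definition \ref{def: Bigyration}, one gets
\[
\lgyr{\phi(a),\phi(b)}{}\circ\phi=\phi\circ\lgyr{a,b}{},\qquad \rgyr{\phi(a),\phi(b)}{}\circ\phi=\phi\circ\rgyr{a,b}{},
\]
as maps on $B$.

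Finally I would apply $\phi$ to the bi-gyrocommutative law in $B$. By hypothesis, for all $a,b\in B$,
\[
a\odot b=(\lgyr{a,b}{}\circ\rgyr{a,b}{})(b\odot a).
\]
Applying $\phi$ to both sides and using the intertwining relations step by step rewrites this as
\[
\phi(a)\odot'\phi(b)=(\lgyr{\phi(a),\phi(b)}{}\circ\rgyr{\phi(a),\phi(b)}{})(\phi(b)\odot'\phi(a)).
\]
Since every element of $\phi(B)$ has the form $\phi(a)$ for a unique $a\in B$, this is precisely the bi-gyrocommutative law for $(\phi(B),\odot')$, and so $\Gam_2=\phi(H_L)\phi(B)\phi(H_R)$ is a bi-gyrocommutative bi-gyrodecomposition. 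There is no real obstacle beyond the bookkeeping above; the only mildly delicate point is checking that the uniqueness of bi-transversal decompositions in $\Gam_2$ is legitimate, but that is already guaranteed by the previous theorem.
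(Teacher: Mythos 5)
Your proposal is correct and follows essentially the same route as the paper: the paper's proof consists precisely of citing the intertwining relations $\lgyr{\phi(b_1),\phi(b_2)}{\phi(b)}=\phi(\lgyr{b_1,b_2}{b})$ and $\rgyr{\phi(b_1),\phi(b_2)}{\phi(b)}=\phi(\rgyr{b_1,b_2}{b})$, which you derive carefully from the uniqueness of the bi-transversal decomposition before applying $\phi$ to the bi-gyrocommutative law. Your write-up simply supplies the bookkeeping the paper leaves implicit.
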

\begin{proof}
This theorem follows from the fact that
\begin{align}
\rgyr{\phi(b_1), \phi(b_2)}{\phi(b)} &= \phi(\rgyr{b_1,
b_2}{b})\notag\\
\lgyr{\phi(b_1), \phi(b_2)}{\phi(b)} &= \phi(\lgyr{b_1,
b_2}{b})\notag
\end{align}
for all $b_1, b_2\in B$.
\end{proof}

\begin{thm}\label{thm: isomorpic gyrogroup from bigyrogroup}
Let $\Gam_1$ and $\Gam_2$ be isomorphic groups via an isomorphism
$\phi$ and let $\Gam_1 = H_LBH_R$ be a bi-gyrodecomposition. Then
the bi-gyrogroups $B$ and $\phi(B)$ are isomorphic as gyrogroups via
$\phi$.
\end{thm}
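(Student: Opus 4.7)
The plan is to show that the restriction $\phi|_B \colon B \to \phi(B)$ is a bijective map preserving the bi-gyrogroup operation. The bijectivity of $\phi|_B$ onto $\phi(B)$ is immediate from $\phi$ being a group isomorphism, so the real content is preservation of $\oplus$, which I will reduce to preservation of the underlying bi-transversal operation $\oplus_b$ and of right gyrations.

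First I would invoke Theorem \ref{thm: bigyrodecomposition as invariant} to ensure that $\Gam_2 = \phi(H_L)\phi(B)\phi(H_R)$ is itself a bi-gyrodecomposition; denote its bi-transversal operation by $\oplus_b^{(2)}$ and its left/right transversal maps by $h_\ell^{(2)}$ and $h_r^{(2)}$. The pivotal step is the following: for $a,b\in B$, apply $\phi$ to the defining identity
\[
ab = h_\ell(a,b)(a\oplus_b b)h_r(a,b)
\]
in $\Gam_1$. Using that $\phi$ is a group homomorphism, this yields
\[
\phi(a)\phi(b) = \phi(h_\ell(a,b))\,\phi(a\oplus_b b)\,\phi(h_r(a,b))
\]
in $\Gam_2$, with $\phi(h_\ell(a,b))\in \phi(H_L)$, $\phi(a\oplus_b b)\in\phi(B)$, and $\phi(h_r(a,b))\in\phi(H_R)$. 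By uniqueness of bi-transversal decomposition in $\Gam_2$, I read off
\[
\phi(a\oplus_b b) = \phi(a)\oplus_b^{(2)}\phi(b),\qquad \phi(h_\ell(a,b))=h_\ell^{(2)}(\phi(a),\phi(b)),\qquad \phi(h_r(a,b))=h_r^{(2)}(\phi(a),\phi(b)).
\]

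Second, I would transfer this to right gyrations. Since right gyrations in a bi-gyrodecomposition are exactly conjugations by $h_\ell^{-1}$ (formula \eqref{eqn: bigyration in conjugate form}), and since $\phi$ is a homomorphism, conjugation in $\Gam_1$ is carried by $\phi$ to conjugation in $\Gam_2$. Combined with the identification of the $h_\ell$-maps above, this gives
\[
\phi(\rgyr{a,b}{x}) = \rgyr{\phi(a),\phi(b)}{\phi(x)}
\]
for all $a,b,x\in B$, exactly as recorded in Theorem \ref{thm: bigyrocomm. bigyrodecomposition as invariant}. Finally, using Definition \ref{def: bi-gyrogroup operation},
\[
\phi(a\oplus b) = \phi(\rgyr{b,a}{(a\oplus_b b)}) = \rgyr{\phi(b),\phi(a)}{(\phi(a)\oplus_b^{(2)}\phi(b))} = \phi(a)\oplus\phi(b),
\]
so $\phi|_B$ preserves the bi-gyrogroup operation.

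Since $\phi|_B$ is a bijection onto $\phi(B)$ preserving $\oplus$, and since by Theorem \ref{thm: bigyrogroup is a gyrogroup} both $B$ and $\phi(B)$ are gyrogroups under their respective bi-gyrogroup operations, $\phi|_B$ is a gyrogroup isomorphism. No serious obstacle is expected: the entire argument is driven by the uniqueness clause in the definition of a bi-transversal, which rigidifies the decomposition so strongly that any ambient group isomorphism automatically respects every derived structure (the $\odot$ operation, the transversal maps, and hence the bi-gyrations and the gyrogroup operation $\oplus$). The only care needed is to keep track of the two different bi-transversal decompositions in $\Gam_1$ and $\Gam_2$ so that the appeal to uniqueness is unambiguous.
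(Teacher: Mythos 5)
Your proposal is correct and follows essentially the same route as the paper: both reduce the claim to the two compatibility facts $\phi(a\oplus_b b)=\phi(a)\oplus_b^{(2)}\phi(b)$ and $\phi(\rgyr{a,b}{x})=\rgyr{\phi(a),\phi(b)}{\phi(x)}$, and then compute $\phi(a\oplus b)$ directly from Definition \ref{def: bi-gyrogroup operation}. The only difference is that you spell out why those two facts hold (uniqueness of the bi-transversal decomposition in $\Gam_2$ plus the conjugation formula \eqref{eqn: bigyration in conjugate form}), whereas the paper simply cites them.
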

\begin{proof}
By Theorem \ref{thm: bigyrogroup is a gyrogroup}, $B$ forms a
gyrogroup whose gyrogroup operation is given by $a\oplus b =
\rgyr{b, a}{(a\odot_1 b)}$ for all $a, b\in B$, and $\phi(B)$ forms
a gyrogroup whose gyrogroup operation is given by $c\oplus d =
\rgyr{d, c}{(c\odot_2 d)}$ for all $c, d\in \phi(B)$. Let $a, b\in
B$. We compute
\begin{align}
\phi(a\oplus b) &= \phi(\rgyr{b, a}{(a\odot_1 b)})\notag\\
{} &= \rgyr{\phi(b), \phi(a)}{\phi(a\odot_1 b)}\notag\\
{} &= \rgyr{\phi(b), \phi(a)}{(\phi(a)\odot_2\phi(b))}\notag\\
{} &= \phi(a)\oplus\phi(b).\notag
\end{align}
Hence, the restriction of $\phi$ to $B$ acts as a gyrogroup
isomorphism from $B$ to $\phi(B)$.
\end{proof}

\section{Special pseudo-orthogonal groups}\label{sec: pseudo-orthogonal group}
\par In this section, we provide a concrete realization of a
bi-gyrocommutative bi-gyrodecomposition.

\par A pseudo-Euclidean space $\R^{m, n}$ of signature $(m, n), m,
n\in\N,$ is an $(m+n)$-dimensional linear space with the
pseudo-Euclidean inner product of signature $(m, n)$. The
\textit{special pseudo-orthogonal group}, denoted by $\so{m, n}$,
consists of all the Lorentz transformations of order $(m, n)$ that
leave the pseudo-Euclidean inner product invariant and that can be
reached continuously from the identity transformation in $\R^{m,
n}$. Denote by $\so{m}$ the group of $m\times m$ special orthogonal
matrices and by $\so{n}$ the group of $n\times n$ special orthogonal
matrices.

\par Following \cite{AU2015PRL}, $\so{m}$ and $\so{n}$ can be embedded
into $\so{m, n}$ as subgroups by defining
\begin{eqnarray}\label{map: so(m) and so(n)-->so(m, n)}
\rho\colon O_m &\mapsto& \begin{pmatrix}O_m & 0_{m, n}\\0_{n, m} &
I_n\end{pmatrix},\quad O_m\in\so{m},\\[5pt]
\lamb\colon O_n &\mapsto& \begin{pmatrix}I_m & 0_{m, n}\\0_{n, m} &
O_n\end{pmatrix},\quad O_n\in\so{n}.
\end{eqnarray}
Let $\bet$ be the map defined on the space $\R^{n\times m}$ of all
$n\times m$ real matrices by
\begin{equation}
\bet\colon P \mapsto \begin{pmatrix}\sqrt{I_m+P^{\textsf{t}}P} & P^{\textsf{t}}\\
P & \sqrt{I_n+PP^{\textsf{t}}}\end{pmatrix},\quad P\in\R^{n\times
m}.
\end{equation}
It is easy to see that $\bet$ is a bijection from $\R^{n\times m}$
to $\bet(\R^{n\times m})$.

\par Note that
\begin{align}
\rho(\so{m}) &= \BCset{\begin{pmatrix}O_m & 0_{m, n}\\0_{n, m} &
I_n\end{pmatrix}}{O_m\in\so{m}}\notag\\[5pt]
\lamb(\so{n}) &= \BCset{ \begin{pmatrix}I_m & 0_{m, n}\\0_{n, m} &
O_n\end{pmatrix}}{O_n\in\so{n}}\notag\\[5pt]
\bet(\R^{n\times m}) &=
\BCset{\begin{pmatrix}\sqrt{I_m+P^{\textsf{t}}P} & P^{\textsf{t}}\\
P & \sqrt{I_n+PP^{\textsf{t}}}\end{pmatrix}}{P\in\R^{n\times
m}}.\notag
\end{align}

\par It follows from Examples 22 and 23 of \cite{AU2015PRL} that
$\lamb(\so{n})$ and $\rho(\so{m})$ are subgroups of $\so{m, n}$.
Further, $\so{m}$ and $\rho(\so{m})$ are isomorphic as groups via
$\rho$, and $\so{n}$ and $\lamb(\so{n})$ are isomorphic as groups
via $\lamb$.

\par We will see shortly that
$$\so{m, n} = \rho(\so{m})\bet(\R^{n\times
m})\lamb(\so{n})$$ is a bi-gyrocommutative bi-gyrodecomposition.

\par By Theorem 8 of \cite{AU2015PRL}, $\bet(\R^{n\times m})$
is a bi-transversal of subgroups $\rho(\so{m})$ and $\lamb(\so{n})$
in the pseudo-orthogonal group $\so{m, n}$. From Lemma 6 of
\cite{AU2015PRL}, we have
\begin{align}
\rho(O_m)\bet(P)\rho(O_m)^{-1} &= \bet(PO_m^{-1})\notag\\
\lamb(O_n)\bet(P)\lamb(O_n)^{-1} &= \bet(O_nP)\notag
\end{align}
for all $O_m\in\so{m}$, $O_n\in\so{n}$, and $P\in\R^{n\times m}$.
Hence, $\rho(\so{m})$ and $\lamb(\so{n})$ normalize
$\bet(\R^{n\times m})$. Setting $P = 0_{n, m}$ in the third identity
of (77) of \cite{AU2015PRL}, we have
$$\lamb(O_n)\rho(O_m) = \rho(O_m)\lamb(O_n)$$
for all $O_m\in\so{m}, O_n\in\so{n}$ because $\bet(P) = \bet(0_{n,
m}) = I_{m+n}$. Thus, $\bet(\R^{n\times m})$ is a bi-gyrotransversal
of $\rho(\so{m})$ and $\lamb(\so{n})$ in $\so{m, n}$.

\par In Theorem 13 of \cite{AU2015PRL}, the bi-gyroaddition, $\oplus_U$,
and bi-gyrations in the \mbox{parameter} bi-gyrogroupoid
$\R^{n\times m}$ are given by

\begin{align}
P_1\oplus_U P_2 &= P_1\sqrt{I_m+P_2^\textsf{t}P_2} +
\sqrt{I_n+P_1P_1^\textsf{t}}P_2\notag\\
\lgyr{P_1, P_2}{} &= \sqrt{I_n+P_{1,2}P_{1,2}^\textsf{t}}^{\hskip2pt-1}\Bset{P_1P_2^\textsf{t}+\sqrt{I_n+P_1P_1^\textsf{t}}\sqrt{I_n+P_2P_2^\textsf{t}}}\notag\\
\rgyr{P_1, P_2}{} &=
\Bset{P_1^\textsf{t}P_2+\sqrt{I_m+P_1^\textsf{t}P_1}\sqrt{I_m+P_2^\textsf{t}P_2}}\sqrt{I_m+P_{1,2}^\textsf{t}P_{1,2}}^{\hskip2pt-1}\notag
\end{align}
for all $P_1, P_2\in\R^{n\times m}$ and $P_{1,2} = P_1\oplus_U P_2$.

\par From (74) of \cite{AU2015PRL}, we have $I_{m+n} = B(0_{n,
m})\in\bet(\R^{n\times m})$. From Theorem 10 of \cite{AU2015PRL}, we
have $\bet(P)^{-1} = \bet(-P)\in\bet(\R^{n\times m})$ for all $P\in
\R^{n\times m}$. From Equations (179) and (184) of \cite{AU2015PRL},
we have
$$\bet(P_1)\bet(P_2)\bet(P_1) = \bet((P_1\oplus_U P_2)\oplus_U\lgyr{P_1, P_2}{P_1}).$$
Hence, $\bet(P_1)\bet(P_2)\bet(P_1)\in\bet(\R^{n\times m})$ for all
$P_1, P_2\in\R^{n\times m}$. This proves that $\bet(\R^{n\times m})$
is a twisted subgroup of $\so{m, n}$.

\par By (104) of \cite{AU2015PRL},
\begin{equation}\label{eqn: product of two biboost}
\bet(P_1)\bet(P_2) = \rho(\rgyr{P_1, P_2}{})\bet(P_1\oplus_U
P_2)\lamb(\lgyr{P_1, P_2}{})
\end{equation}
for all $P_1, P_2\in\R^{n\times m}$. Hence, the left and right
transversal maps induced by the decomposition $\so{m, n} =
\rho(\so{m})\bet(\R^{n\times})\lamb(\so{n})$ are given by
\begin{equation}\label{eqn:  left bi-transversal map in R(n x m)}
h_\ell(\bet(P_1), \bet(P_2)) = \rho(\rgyr{P_1, P_2}{})
\end{equation}
and
\begin{equation}\label{eqn:  right bi-transversal map in R(n x m)}
h_r(\bet(P_1), \bet(P_2)) = \lamb(\lgyr{P_1, P_2}{})
\end{equation}
for all $P_1, P_2\in\R^{n\times m}$.

\par By (162b) of \cite{AU2015PRL},
$\irgyr{P_1, P_2}{} = \rgyr{P_2, P_1}{}$. Hence,
$$h_\ell(\bet(P_1), \bet(P_2))^{-1} = \rho(\irgyr{P_1, P_2}{}) = \rho(\rgyr{P_2, P_1}{}) = h_\ell(\bet(P_2),
\bet(P_1)).$$ Similarly, (162a) of \cite{AU2015PRL} implies
$h_r(\bet(P_1), \bet(P_2))^{-1} = h_r(\bet(P_2), \bet(P_1))$.
\mbox{Combining} these results gives
\begin{thm}\label{thm: SO(m, n) is bigyrodecomposition}
The decomposition
\begin{equation}\label{eqn: decomposition of SO(m, n)}
\so{m, n} = \rho(\so{m})\bet(\R^{n\times m})\lamb(\so{n})
\end{equation}
is a bi-gyrodecomposition.
\end{thm}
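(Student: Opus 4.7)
The plan is to verify the three defining conditions of a bi-gyrodecomposition (Definition~\ref{def: Bi-gyrodecomposition}) for the data $\Gamma=\so{m,n}$, $B=\bet(\R^{n\times m})$, $H_L=\rho(\so{m})$, $H_R=\lamb(\so{n})$. The theorem is essentially a packaging statement: every ingredient is already laid out in the paragraphs preceding the theorem or imported directly from \cite{AU2015PRL}, so the proof is pure assembly rather than calculation.

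First, I would check that $B$ is a bi-gyrotransversal of $H_L$ and $H_R$. Unique factorization is Theorem~8 of \cite{AU2015PRL}. Normalization of $B$ by $H_L$ and $H_R$ is immediate from the two conjugation formulas recalled above the theorem, since $PO_m^{-1}$ and $O_nP$ are again elements of $\R^{n\times m}$, so their $\bet$-images lie in $B$. Element-wise commutation of $H_L$ and $H_R$ comes from setting $P=0_{n,m}$ in the third identity of (77) of \cite{AU2015PRL}, which specializes cleanly because $\bet(0_{n,m})=I_{m+n}$.

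Second, I would verify that $B$ is a twisted subgroup of $\so{m,n}$ by checking the three clauses of Definition~\ref{def: twisted subgroup} one at a time. The identity $I_{m+n}$ belongs to $B$ since $I_{m+n}=\bet(0_{n,m})$ by (74) of \cite{AU2015PRL}; closure under inverses is Theorem~10 of \cite{AU2015PRL}, giving $\bet(P)^{-1}=\bet(-P)$; and closure under the sandwich product follows from the identity
$$\bet(P_1)\bet(P_2)\bet(P_1) = \bet\bigl((P_1\oplus_U P_2)\oplus_U\lgyr{P_1,P_2}{P_1}\bigr),$$
obtained by combining Equations (179) and (184) of \cite{AU2015PRL}.

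Third, and this is the only step with any genuine content, I would read the transversal maps off the product formula (104) of \cite{AU2015PRL} as
$$h_\ell(\bet(P_1),\bet(P_2)) = \rho(\rgyr{P_1,P_2}{}), \qquad h_r(\bet(P_1),\bet(P_2)) = \lamb(\lgyr{P_1,P_2}{}),$$
and then invoke the inversion identities (162a), (162b) of \cite{AU2015PRL}, namely $\ilgyr{P_1,P_2}{}=\lgyr{P_2,P_1}{}$ and $\irgyr{P_1,P_2}{}=\rgyr{P_2,P_1}{}$, together with the fact that $\rho$ and $\lamb$ are group homomorphisms, to deduce $h_\ell(a,b)^{-1}=h_\ell(b,a)$ and $h_r(a,b)^{-1}=h_r(b,a)$ for all $a,b\in B$. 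The principal obstacle, such as it is, lives entirely in this last step: one must correctly identify the \emph{left} transversal map with the \emph{right} bi-gyration of the parameter bi-gyrogroupoid $\R^{n\times m}$, and vice versa, a notational twist that is easy to copy backwards but that aligns exactly with the conventions set up in Definition~\ref{def: Bigyration} and equation~\eqref{eqn: bigyration in conjugate form}.
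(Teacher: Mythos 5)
Your proposal is correct and follows exactly the same route as the paper, which likewise assembles the three conditions of Definition~\ref{def: Bi-gyrodecomposition} from Theorem~8, Lemma~6, identity (77), (74), Theorem~10, (179)/(184), (104), and (162a)--(162b) of \cite{AU2015PRL} in the paragraphs preceding the theorem. The identification of the left transversal map with the right gyration (and vice versa) that you flag is handled identically there via \eqref{eqn: left bi-transversal map in R(n x m)} and \eqref{eqn: right bi-transversal map in R(n x m)}.
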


\par By \eqref{eqn: product of two biboost}, the
bi-transversal operation induced by the decomposition \eqref{eqn:
decomposition of SO(m, n)} is given by
\begin{equation}\label{eqn: explicit formula for bitranversal operation in B(P)}
\bet(P_1)\oplus_b\bet(P_2) = \bet(P_1\oplus_U P_2)
\end{equation}
for all $P_1, P_2\in\R^{n\times m}$.

\par Note that $\rgyr{P_1, P_2}{}$ is an $m\times
m$ matrix and $\lgyr{P_1, P_2}{}$ is an $n\times n$ matrix, while
$\rgyr{\bet(P_1), \bet(P_2)}{}$ and $\lgyr{\bet(P_1), \bet(P_2)}{}$
are maps. By \eqref{eqn: bigyration in conjugate form}, the action
of left and right gyrations on $\bet(\R^{n\times n})$ is given by
\begin{equation}\label{eqn: left gyration on B(P)}
\lgyr{\bet(P_1), \bet(P_2)}{\bet(P)} = \bet(\lgyr{P_1, P_2}{P})
\end{equation}
and
\begin{equation}\label{eqn: right gyration on B(P)}
\rgyr{\bet(P_1), \bet(P_2)}{\bet(P)} = \bet(P\rgyr{P_1, P_2}{})
\end{equation}
for all $P_1, P_2, P\in\R^{n\times m}$. Using \eqref{eqn: left
gyration on B(P)} and \eqref{eqn: right gyration on B(P)}, together
with Theorem 25 of \cite{AU2015PRL}, we have
\begin{thm}\label{thm: SO(m, n) is bigyrocommutative}
The bi-gyrodecomposition $$\so{m, n} = \rho(\so{m})\bet(\R^{n\times
m})\lamb(\so{n})$$ is bi-gyrocommutative.
\end{thm}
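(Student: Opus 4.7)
The plan is to reduce the bi-gyrocommutative identity on the bi-transversal $\bet(\R^{n\times m})$ to a matrix identity on the parameter space $\R^{n\times m}$, which is supplied by Theorem~25 of \cite{AU2015PRL}. By Definition \ref{def: bigyrocommutative bigyrodecomposition} together with \eqref{eqn: bigyrocommutative law}, it suffices to check that
$$\bet(P_1)\oplus_b\bet(P_2) = (\lgyr{\bet(P_1), \bet(P_2)}{}\circ\rgyr{\bet(P_1), \bet(P_2)}{})(\bet(P_2)\oplus_b\bet(P_1))$$
for all $P_1, P_2\in\R^{n\times m}$.

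First, I would evaluate the left-hand side using the explicit formula \eqref{eqn: explicit formula for bitranversal operation in B(P)}, obtaining $\bet(P_1\oplus_U P_2)$. Next, I would unfold the right-hand side one layer at a time. Applying \eqref{eqn: explicit formula for bitranversal operation in B(P)} first yields $\bet(P_2)\oplus_b\bet(P_1) = \bet(P_2\oplus_U P_1)$. The inner application of the right gyration, via \eqref{eqn: right gyration on B(P)}, produces $\bet((P_2\oplus_U P_1)\rgyr{P_1, P_2}{})$, and the subsequent application of the left gyration, via \eqref{eqn: left gyration on B(P)}, produces $\bet(\lgyr{P_1, P_2}{}(P_2\oplus_U P_1)\rgyr{P_1, P_2}{})$.

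Since $\bet$ is a bijection from $\R^{n\times m}$ onto its image, equating the two sides and stripping off $\bet$ reduces the bi-gyrocommutative law to the single matrix identity
$$P_1\oplus_U P_2 = \lgyr{P_1, P_2}{}(P_2\oplus_U P_1)\rgyr{P_1, P_2}{}$$
in $\R^{n\times m}$, where $\lgyr{P_1, P_2}{}$ is an $n\times n$ matrix acting on the left and $\rgyr{P_1, P_2}{}$ is an $m\times m$ matrix acting on the right. This is precisely the bi-gyrocommutative law for the parameter bi-gyrogroupoid $(\R^{n\times m}, \oplus_U)$ established as Theorem~25 of \cite{AU2015PRL}, so invoking that theorem finishes the proof.

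I do not anticipate a substantive obstacle, since the non-trivial matrix computation has been carried out externally. The only point requiring care is the order of composition: the right gyration, which by \eqref{eqn: right gyration on B(P)} acts as right-multiplication by the $m\times m$ matrix $\rgyr{P_1, P_2}{}$, must be applied before the left gyration, which by \eqref{eqn: left gyration on B(P)} acts as left-multiplication by the $n\times n$ matrix $\lgyr{P_1, P_2}{}$, and this matches the sandwich form in which Theorem~25 is stated.
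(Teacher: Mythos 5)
Your proposal is correct and follows essentially the same route as the paper: the paper's (one-line) proof likewise invokes \eqref{eqn: left gyration on B(P)} and \eqref{eqn: right gyration on B(P)} to push the bi-gyrocommutative law through the bijection $\bet$ and reduce it to the matrix identity $P_1\oplus_U P_2 = \lgyr{P_1, P_2}{}(P_2\oplus_U P_1)\rgyr{P_1, P_2}{}$ of Theorem~25 in \cite{AU2015PRL}. You have simply written out the unfolding that the paper leaves implicit, including the correct order of applying the right gyration before the left one.
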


\par By Theorem 52 of \cite{AU2015PRL}, the space $\R^{n\times m}$ of all $n\times m$ real
matrices forms a gyrocommutative gyrogroup under the operation
$\oplus'_U$ given by
\begin{equation}\label{eqn: bigyrogroup bigyroaddition in R(n x m)}
P_1\oplus'_U P_2 = (P_1\oplus_U P_2)\rgyr{P_2, P_1}{},\quad P_1,
P_2\in\R^{n\times m}.
\end{equation}

\begin{thm}\label{thm: B(P) is a gyrocomm. gyrogroup}
The set
$$\bet(\R^{n\times m}) = \BCset{\begin{pmatrix}\sqrt{I_m+P^{\textsf{t}}P} & P^{\textsf{t}}\\
P & \sqrt{I_n+PP^{\textsf{t}}}\end{pmatrix}}{P\in\R^{n\times m}}$$
together with the bi-gyrogroup operation $\oplus$ given by
$$\bet(P_1)\oplus\bet(P_2) = \bet((P_1\oplus_U P_2)\rgyr{P_2, P_1}{})$$
is a gyrocommutative gyrogroup isomorphic to $(\R^{n\times m},
\oplus'_U)$.
\end{thm}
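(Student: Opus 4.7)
The plan is to assemble the theorem from three results already in hand. First, to establish the gyrogroup structure, I would invoke Theorem~\ref{thm: bigyrocommutative bigyrogroup is a gyrocomm. gyrogroup} applied to the decomposition $\so{m,n} = \rho(\so{m})\bet(\R^{n\times m})\lamb(\so{n})$, which is a bi-gyrodecomposition by Theorem~\ref{thm: SO(m, n) is bigyrodecomposition} and bi-gyrocommutative by Theorem~\ref{thm: SO(m, n) is bigyrocommutative}. This immediately yields that $\bet(\R^{n\times m})$ with the bi-gyrogroup operation $\oplus$ of Definition~\ref{def: bi-gyrogroup operation} is a gyrocommutative gyrogroup.

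Next, I would verify that the closed-form expression $\bet(P_1)\oplus\bet(P_2) = \bet((P_1\oplus_U P_2)\rgyr{P_2, P_1}{})$ asserted in the statement agrees with the abstract definition $a\oplus b = \rgyr{b,a}{(a\oplus_b b)}$. The unwinding is short: using \eqref{eqn: explicit formula for bitranversal operation in B(P)} we have $\bet(P_1)\oplus_b\bet(P_2) = \bet(P_1\oplus_U P_2)$, and then \eqref{eqn: right gyration on B(P)} gives
\begin{equation*}
\rgyr{\bet(P_2), \bet(P_1)}{\bet(P_1\oplus_U P_2)} = \bet((P_1\oplus_U P_2)\rgyr{P_2, P_1}{}),
\end{equation*}
which is the stated formula.

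For the isomorphism claim, I would observe that $\bet$ is, by construction, a bijection from $\R^{n\times m}$ to $\bet(\R^{n\times m})$, and that the identity just derived reads precisely $\bet(P_1)\oplus\bet(P_2) = \bet(P_1 \oplus'_U P_2)$ in view of \eqref{eqn: bigyrogroup bigyroaddition in R(n x m)}. Hence $\bet$ is a groupoid isomorphism, and since $(\R^{n\times m}, \oplus'_U)$ is a gyrocommutative gyrogroup (Theorem~52 of \cite{AU2015PRL}) the transport of structure identifies the two as gyrogroups.

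There is no real obstacle here; the theorem is a direct corollary of the abstract theory in Section~\ref{sec: bigyrodecomposition} together with the concrete bi-gyrodecomposition of $\so{m,n}$ from Section~\ref{sec: pseudo-orthogonal group}. The only step requiring care is bookkeeping the order of the arguments in the right gyration (noting that $\rgyr{P_2, P_1}{}$, not $\rgyr{P_1, P_2}{}$, appears in the final formula, because Definition~\ref{def: bi-gyrogroup operation} swaps the arguments), and then recognising that the resulting operation on $\bet(\R^{n\times m})$ is conjugate via $\bet$ to $\oplus'_U$.
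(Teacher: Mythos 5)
Your proposal is correct and follows essentially the same route as the paper: it cites the same combination of Theorems \ref{thm: SO(m, n) is bigyrodecomposition}, \ref{thm: SO(m, n) is bigyrocommutative}, and \ref{thm: bigyrocommutative bigyrogroup is a gyrocomm. gyrogroup} for the gyrocommutative gyrogroup structure, performs the same unwinding of $\rgyr{\bet(P_2),\bet(P_1)}{(\bet(P_1)\oplus_b\bet(P_2))}$ into $\bet((P_1\oplus_U P_2)\rgyr{P_2,P_1}{})$, and concludes the isomorphism via $\bet(P_1)\oplus\bet(P_2)=\bet(P_1\oplus'_U P_2)$ exactly as the paper does.
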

\begin{proof}
The theorem follows from Theorems \ref{thm: SO(m, n) is
bigyrodecomposition}, \ref{thm: SO(m, n) is bigyrocommutative},
\ref{thm: bigyrogroup is a gyrogroup}, and \ref{thm:
bigyrocommutative bigyrogroup is a gyrocomm. gyrogroup}. Further,
the bi-gyrogroup operation $\oplus$ is given by
\begin{align}
\bet(P_1)\oplus\bet(P_2) &= \rgyr{\bet(P_2),
\bet(P_1)}{(\bet(P_1)\oplus_b\bet(P_2))}\notag\\
{} &= \rgyr{\bet(P_2), \bet(P_1)}{\bet(P_1\oplus_U P_2)}\notag\\
{} &= \bet((P_1\oplus_U P_2)\rgyr{P_2, P_1}{}).\notag
\end{align}
From \eqref{eqn: bigyrogroup bigyroaddition in R(n x m)}, we have
$\bet(P_1)\oplus\bet(P_2) = \bet(P_1\oplus'_U P_2)$. Hence, $\bet$
acts as a gyrogroup isomorphism from $\R^{n\times m}$ to
$\bet(\R^{n\times m})$.
\end{proof}

\section{Spin groups}\label{sec: spin group}
We establish that the spin group of the Clifford algebra of
pseudo-Euclidean space $\R^{m, n}$ of signature $(m, n)$ has a
bi-gyrocommutative bi-gyrodecomposition. For the basic notion of
Clifford algebras, the reader is referred to \cite{GGMM1991CAD,
PL2001CAS, BLMM1989SG, LG2001CGG}.

\par Let $(V, B)$ be a real quadratic space. That is, $V$ is a linear
space over $\R$ \mbox{together} with a non-degenerate symmetric
bilinear form $B$. Let $Q$ be the \mbox{associated} quadratic form
given by $Q(v) = B(v, v)$ for $v\in V$. Denote by $\CL{V, Q}$ the
\textit{\mbox{Clifford} algebra} of $(V, B)$. Define
\begin{equation}\label{eqn: Clifford group}
\Cg{V, Q} = \cset{g\in\mulCL{V, Q}}{\forall v\in V,\hskip2pt
\hat{g}vg^{-1}\in V}.
\end{equation}
Here, $\hat\cdot$ stands for the unique involutive automorphism of
$\CL{V, Q}$ such that $\hat{v} = -v$ for all $v\in V$, known as the
\textit{grade involution}. If $V$ is \textit{finite} dimensional,
then $\Cg{V, Q}$ is indeed a subgroup of the group of units of
$\CL{V, Q}$, called the \textit{Clifford group of $\CL{V, Q}$}. In
this case, any element $g$ of $\Cg{V, Q}$ induces the linear
automorphism $T_g$ of $V$ given by
\begin{equation}
T_g(v) = \hat{g}vg^{-1},\quad v\in V.
\end{equation}

\par Since $T_g\circ T_h = T_{gh}$ for all $g, h\in \Cg{V, Q}$,
the map $\pi\colon g\mapsto T_g$ defines a group homomorphism from
$\Cg{V, Q}$ to the general linear group $\GL{V}$, known as the
\textit{twisted adjoint representation of $\Cg{V, Q}$}. The kernel
of $\pi$ equals $\R^\times 1 := \cset{\lamb1}{\lamb\in\R, \lamb\ne
0}$. By the Cartan-Dieudonn\'{e} theorem, $\pi$ maps $\Cg{V, Q}$
onto the orthogonal group $\Or{V, Q}$.

\par Recall that, in the Clifford algebra $\CL{V, Q}$, we have $v^2 =
Q(v)1$ for all $v\in V$. Hence, if $v\in V$ and $Q(v)\ne 0$, then
$v$ is invertible whose inverse is $v/Q(v)$. Further, we have an
important identity $uv+vu = 2B(u, v)1$ for all $u, v\in V$. Using
this identity, we obtain
$$-vuv^{-1} = u - (uv + vu)v^{-1} = u - (2B(u, v)1)\Bp{\frac{v}{Q(v)}}
= u - \frac{2B(u, v)}{Q(v)}v,$$ which implies $\hat{v}uv^{-1} =
-vuv^{-1}\in V$ for all $u\in V$. Hence, if $v\in V$ and $Q(v)\ne
0$, then $v\in\Cg{V, Q}$. In fact, $T_v$ is \textit{the reflection
about the hyperplane orthogonal to $v$}. We also have the following
important subgroup of the Clifford group of $\CL{V, Q}$:
\begin{equation}\label{eqn: spin group}
\spin{V, Q} = \cset{v_1v_2\cdots v_r}{r\textrm{ is even, }v_i\in V,
\textrm{ and } Q(v_i) = \pm1},
\end{equation}
known as the \textit{spin group of $\CL{V, Q}$}.

\par The following theorem is well known in the literature.
Its proof can be found, for instance, in Theorem 2.9 of
\cite{BLMM1989SG}.

\begin{thm}\label{thm: pi maps spin to SO(m, n)}
The restriction of the twisted adjoint representation to the spin
group of $\CL{V, Q}$ is a surjective group homomorphism from
$\spin{V, Q}$ to the special orthogonal group $\so{V, Q}$ of $V$.
Its kernel is $\set{1, -1}$.
\end{thm}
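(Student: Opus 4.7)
The plan is to verify three separate assertions implicit in the theorem: (a) $\pi$ maps $\spin{V,Q}$ into $\so{V,Q}$; (b) this restriction is surjective; and (c) its kernel equals $\set{1,-1}$. Part (a) is almost immediate: each generator $v_i$ in the definition of $\spin{V,Q}$ satisfies $Q(v_i)=\pm 1$, and the identity $\hat{v}_i u v_i^{-1} = u - \frac{2B(u,v_i)}{Q(v_i)}v_i$ derived in the text shows that $T_{v_i}$ is the reflection through the hyperplane orthogonal to $v_i$, so $\det T_{v_i} = -1$. Since $\pi$ is a homomorphism and each $g\in\spin{V,Q}$ is a product of an \emph{even} number of such vectors, $\det\pi(g) = +1$.

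For (b), I would take an arbitrary $R\in\so{V,Q}$ and apply the Cartan-Dieudonn\'e theorem (already cited in the text to see that $\pi$ maps $\Cg{V,Q}$ onto $\Or{V,Q}$) to factor $R = T_{u_1}\circ T_{u_2}\circ\cdots\circ T_{u_r}$ with each $u_i\in V$ non-isotropic; $\det R = +1$ forces $r$ to be even. Rescaling each $u_i$ to $u_i/\sqrt{|Q(u_i)|}$ leaves both $T_{u_i}$ and the ambient space $V$ unchanged and normalizes $Q(u_i)$ to $\pm 1$, so $g := u_1 u_2\cdots u_r$ belongs to $\spin{V,Q}$ by \eqref{eqn: spin group}, and $\pi(g) = R$ by multiplicativity of $\pi$.

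For (c), suppose $g\in\spin{V,Q}$ with $\pi(g) = \id{V}$. Since $\ker\pi = \R^\times 1$ on the full Clifford group, $g = \lambda 1$ for some $\lambda\in\R^\times$. Introduce the reversion anti-automorphism $\ast$ of $\CL{V,Q}$, characterized by $(w_1 w_2\cdots w_k)^\ast = w_k\cdots w_2 w_1$ on products of vectors. Writing $g = v_1\cdots v_r$ with $Q(v_i)=\pm 1$ and telescoping using $v_i^2 = Q(v_i)1$ yields $gg^\ast = Q(v_1)Q(v_2)\cdots Q(v_r)\cdot 1\in\set{1,-1}$. On the other hand $g^\ast = (\lambda 1)^\ast = \lambda 1$, so $gg^\ast = \lambda^2 1$. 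Comparing forces $\lambda^2 = \pm 1$, and the reality of $\lambda$ then forces $\lambda^2 = 1$, i.e., $\lambda = \pm 1$. Conversely, picking any $v\in V$ with $Q(v) = 1$ gives $\pm 1 = (\pm v)v\in\spin{V,Q}\cap\ker\pi$, so the kernel is exactly $\set{1,-1}$.

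The main subtlety lies in part (a) if one reads $\so{V,Q}$ as the identity component of $\Or{V,Q}$ (the paper's interpretation in the earlier definition of $\so{m,n}$), since in indefinite signature ``determinant one'' and ``identity component'' genuinely differ. Handling this cleanly requires the further fact that $\spin{V,Q}$ is path-connected, so that its image under the continuous map $\pi$ is a connected subset of $\Or{V,Q}$ containing $\pi(1) = \id{V}$, hence lies in the identity component. Under the more common reading where $\so{V,Q}$ denotes the kernel of the determinant, the sign argument in (a) suffices and the proof is complete as sketched.
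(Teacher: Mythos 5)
The paper offers no proof of this theorem at all --- it simply points to Theorem 2.9 of the Lawson--Michelsohn reference --- so your proposal is to be judged against the standard textbook argument rather than against anything in the text. Parts (a), (b), and (c) are precisely that standard argument, and they are correct under the reading $\so{V,Q}=\cset{R\in\Or{V,Q}}{\det R=1}$: the determinant count on reflections, Cartan--Dieudonn\'e together with the rescaling $u_i\mapsto u_i/\sqrt{\abs{Q(u_i)}}$ (which changes neither $T_{u_i}$ nor the parity $r$), and the computation $gg^\ast=Q(v_1)\cdots Q(v_r)1$ combined with $\ker\pi=\R^\times 1$ are all sound. (A minor point worth making explicit in (c): $\spin{V,Q}\subseteq\Cg{V,Q}$, so the description of $\ker\pi$ applies; this is immediate since the paper verifies that every non-isotropic $v$ lies in $\Cg{V,Q}$.)

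The one genuine error is the claim in your last paragraph that $\spin{V,Q}$ is path-connected. It is not, in indefinite signature: the spinor norm $g\mapsto gg^\ast$, which your own part (c) shows equals $Q(v_1)\cdots Q(v_r)\in\set{1,-1}$ on a product of unit vectors, is a continuous homomorphism onto the discrete group $\set{1,-1}$, and it is surjective whenever $m,n\geq 1$ (take $g=vw$ with $Q(v)=1$ and $Q(w)=-1$). Hence $\spin{V,Q}$ has at least two connected components ($\spin{1,1}$ has four), and the connectivity argument cannot confine $\pi(\spin{V,Q})$ to the identity component of $\Or{V,Q}$. Indeed it should not: $\pi(vw)=T_v\circ T_w$ has determinant $1$ yet reverses the orientations of maximal definite subspaces, so it lies outside the identity component, and $\pi(\spin{V,Q})$ is the full determinant-one group. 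The theorem is therefore true only with the determinant-one reading of $\so{V,Q}$; the tension with the paper's earlier definition of $\so{m,n}$ as the component of the identity is a looseness in the paper's conventions, not something a connectivity argument can repair. Your core proof stands once that final paragraph is deleted and the determinant-one convention is fixed.
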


\begin{cor}\label{cor: the quotient of spin isomorphic to SO(V, Q)}
The quotient group $\spin{V, Q}/\set{1, -1}$ and the special
orthogonal group $\so{V, Q}$ are isomorphic.
\end{cor}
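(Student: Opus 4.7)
The plan is to invoke the First Isomorphism Theorem for groups, applied to the homomorphism supplied by the preceding theorem. Write $\pi_0$ for the restriction of the twisted adjoint representation $\pi$ to $\spin{V,Q}$. Theorem \ref{thm: pi maps spin to SO(m, n)} asserts that $\pi_0\colon\spin{V,Q}\to\so{V,Q}$ is a surjective group homomorphism whose kernel is $\set{1,-1}$. This is precisely the hypothesis needed for the First Isomorphism Theorem.

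First I would note that $\set{1,-1}$ is indeed a normal subgroup of $\spin{V,Q}$, which is needed in order for the quotient to make sense. This is immediate from the fact that $-1$ is a central element of $\CL{V,Q}$ (so it commutes with every element of $\spin{V,Q}$), making $\set{1,-1}$ central and hence normal. Alternatively, normality follows for free from the fact that $\set{1,-1}$ is a kernel of the homomorphism $\pi_0$. Next, the First Isomorphism Theorem produces a well-defined injective homomorphism $\bar\pi_0\colon\spin{V,Q}/\set{1,-1}\to\so{V,Q}$ given by $\bar\pi_0(g\set{1,-1})=\pi_0(g)$, and surjectivity of $\bar\pi_0$ is inherited from surjectivity of $\pi_0$. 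Thus $\bar\pi_0$ is the desired group isomorphism.

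There is no substantive obstacle here; the result is a direct corollary of Theorem \ref{thm: pi maps spin to SO(m, n)} together with the standard First Isomorphism Theorem. The only minor check is the normality of $\set{1,-1}$, which is handled by either the centrality of $-1$ in $\CL{V,Q}$ or by the general fact that kernels are normal.
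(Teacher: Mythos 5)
Your proof is correct and is exactly the intended argument: the paper states this corollary without proof as an immediate consequence of Theorem \ref{thm: pi maps spin to SO(m, n)} via the First Isomorphism Theorem, which is precisely what you carry out (including the routine normality check for the kernel $\set{1,-1}$).
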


\par As $V$ is a linear space over $\R$, we can choose an ordered
basis for $V$ so that
$$Q(v) = v_1^2+v_2^2+\cdots+v_m^2-v_{m+1}^2-v_{m+2}^2-\cdots -v_{m+n}^2$$
for all $v = (v_1,\dots, v_m, v_{m+1}, \dots, v_{m+n})\in \R^{m+n}$,
\cite[Theorem 4.5]{LG2001CGG}. Hence, $\so{V, Q}\equiv\so{m, n}$ and
$\spin{V, Q}\equiv\spin{m, n}$. Corollary \ref{cor: the quotient of
spin isomorphic to SO(V, Q)} implies that
\begin{equation}\label{eqn: SO(m, n) isomorphic to the quotient of spin}
\spin{m, n}/\set{1, -1}\cong\so{m, n}.
\end{equation}
Hence, we have the following theorem.

\begin{thm}\label{thm: SO(m, n) has bigyrodecomposition}
The quotient group $$\spin{m, n}/\set{1, -1}$$ has a
bi-gyrocommutative bi-gyrodecomposition.
\end{thm}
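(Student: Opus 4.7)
The strategy is to combine the concrete bi-gyrocommutative bi-gyrodecomposition already established for the special pseudo-orthogonal group in Section \ref{sec: pseudo-orthogonal group} with the invariance of this structure under group isomorphism, as recorded in Theorem \ref{thm: bigyrocomm. bigyrodecomposition as invariant}. The only real work is to identify an explicit isomorphism between $\spin{m, n}/\set{1, -1}$ and a group that is already known to admit such a decomposition, and then transport the decomposition through it.

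First I would invoke Theorems \ref{thm: SO(m, n) is bigyrodecomposition} and \ref{thm: SO(m, n) is bigyrocommutative} to record that
\begin{equation*}
\so{m, n} = \rho(\so{m})\,\bet(\R^{n\times m})\,\lamb(\so{n})
\end{equation*}
is a bi-gyrocommutative bi-gyrodecomposition. Next, I would appeal to Corollary \ref{cor: the quotient of spin isomorphic to SO(V, Q)}, which, via the twisted adjoint representation and the identification $\so{V, Q}\equiv\so{m, n}$ following the diagonalization of $Q$, supplies a group isomorphism
\begin{equation*}
\psi\colon \spin{m, n}/\set{1, -1} \longrightarrow \so{m, n}.
\end{equation*}
Letting $\phi = \psi^{-1}$ and applying Theorem \ref{thm: bigyrocomm. bigyrodecomposition as invariant} to $\phi$ yields
\begin{equation*}
\spin{m, n}/\set{1, -1} = \phi(\rho(\so{m}))\,\phi(\bet(\R^{n\times m}))\,\phi(\lamb(\so{n}))
\end{equation*}
as a bi-gyrocommutative bi-gyrodecomposition, which is the desired conclusion.

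There is essentially no obstacle here, since both the ambient isomorphism and the relevant transfer principle are in hand; the main point worth spelling out is that Theorem \ref{thm: pi maps spin to SO(m, n)} guarantees $\psi$ is a genuine group isomorphism (not merely a set bijection), so that the hypothesis of Theorem \ref{thm: bigyrocomm. bigyrodecomposition as invariant} is truly met. The theorem is therefore a direct corollary of the two preceding results together with the earlier invariance theorem.
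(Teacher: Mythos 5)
Your proposal is correct and follows essentially the same route as the paper: the paper's proof likewise combines the isomorphism $\spin{m,n}/\set{1,-1}\cong\so{m,n}$ from Corollary \ref{cor: the quotient of spin isomorphic to SO(V, Q)} with Theorems \ref{thm: SO(m, n) is bigyrocommutative} and \ref{thm: bigyrocomm. bigyrodecomposition as invariant} to transport the bi-gyrocommutative bi-gyrodecomposition of $\so{m,n}$ to the quotient of the spin group. Your added remark that the transfer requires a genuine group isomorphism (guaranteed by Theorem \ref{thm: pi maps spin to SO(m, n)}) is a reasonable, if unstated, clarification of the same argument.
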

\begin{proof}
This theorem follows directly from \eqref{eqn: SO(m, n) isomorphic
to the quotient of spin} and Theorems \ref{thm: bigyrocomm.
bigyrodecomposition as invariant} and \ref{thm: SO(m, n) is
bigyrocommutative}.
\end{proof}

\section{Conclusion}\label{sec: conclusion}
\par A gyrogroup is a non-associative group-like structure in which
the non-associativity is controlled by a special family of
automorphisms called gyrations. Gyrations, in turn, result from the
extension by abstraction of the relativistic effect known as
\textit{Thomas precession}. In this paper we generalize the notion
of gyrogroups, which involves a single family of gyrations, to that
of bi-gyrogroups, which involves two distinct families of gyrations,
collectively called bi-gyrations.

\par The bi-transversal decomposition $\Gam = H_LBH_R$,
studied in Section \ref{sec: bitransversal}, naturally leads to a
groupoid $(B, \odot)$ that comes with two families of automorphisms,
left and right ones. This groupoid is related to the bi-gyrogroupoid
$(B, \oplus_b)$, studied earlier in Section \ref{sec:
bigyrogroupoid}. Bi-gyrogroupoids $(B, \oplus_b)$ form an
intermediate structure that suggestively leads to the desired
bi-gyrogroup structure $(B, \oplus)$. The bi-transversal operation
$\odot$ arises naturally from the bi-transversal decomposition
\eqref{eqn: Unique decomposition}. Under the natural conditions of
Definition \ref{def: Bi-gyrodecomposition}, the bi-transversal
operation $\odot$ becomes the bi-gyrogroupoid operation $\oplus_b$.
The latter operation leads to the desired bi-gyrogroup operation
$\oplus$ by means of \eqref{eqn: bigyrogroup operation}.

\par As we have shown in Section \ref{sec: bigyrodecomposition},
any bi-gyrodecomposition $\Gam = H_LBH_R$ of a group $\Gam$ induces
the bi-gyrogroup structure on $B$, giving rise to a bi-gyrogroup
$(B, \oplus)$ along with left gyrations $\lgyr{a, b}{}$ and right
gyrations $\rgyr{a, b}{}$, $a, b\in B$. Further, in the case where
$H_L$ is the trivial subgroup of $\Gam$, the bi-gyrodecomposition
reduces to the decomposition $\Gam = BH$ studied in
\cite{TFAU2000IDG}. The bi-gyrogroup $(B, \oplus)$ induced by a
bi-gyrodecomposition of a group is indeed an abstract version of the
bi-gyrogroup $\R^{n\times m}$ of all $n\times m$ real matrices
studied in \cite{AU2015PRL}.

\par Bi-gyrogroups are group-like structures. For instance, they satisfy
the bi-gyroassociative law (Theorem \ref{thm: bigyroassociative law
in bigyrogroup}), which descends to the associative law if their
left and right gyrations are the identity automorphism. A concrete
realization of a bi-gyrogroup is found in the special
pseudo-orthogonal group $\so{m, n}$ of the pseudo-Euclidean space
$\R^{m, n}$ of signature $(m, n)$, as shown in \cite{AU2015PRL} and
in Section \ref{sec: pseudo-orthogonal group}. Moreover,
bi-gyrogroups arise in the group counterpart of Clifford algebras as
we establish in Section \ref{sec: spin group} that the quotient
group $\spin{m, n}/\set{1, -1}$ of the spin group possesses a
bi-gyrodecomposition.

\par By Theorem \ref{thm: bigyrogroup is a gyrogroup},
any bi-gyrogroup is a gyrogroup. Yet, in general, the
bi-gyrostructure of a bi-gyrogroup is richer than the gyrostructure
of a gyrogroup. To see this clearly, we note that gyrations $\gyr{a,
b}{}$ of a gyrogroup $(B, \oplus)$, $a, b\in B$, are completely
determined by the gyrogroup operation according to the
\textit{gyrator identity} in Theorem 2.10 (10) of \cite{AU2008AHG}:
\begin{equation}\label{eqn: gyrator identity}
\gyr{a, b}{x} = \ominus(a\oplus b)\oplus(a\oplus (b\oplus x))
\end{equation}
for all $a, b, x$ in the gyrogroup $(B, \oplus)$. In contrast, the
\textit{bi-gyrator identity} \mbox{analogous} to \eqref{eqn: gyrator
identity} is
\begin{equation}\label{eqn: bi-gyrator identity}
(\lgyr{a, b}{}\circ\rgyr{b, a}){(x)} = \ominus(a\oplus
b)\oplus(a\oplus(b\oplus x))
\end{equation}
for all $a, b, x$ in a bi-gyrogroup $(B, \oplus)$. Here, the
bi-gyrogroup operation \mbox{completely} determines the composite
automorphism $\lgyr{a, b}{}\circ\rgyr{b, a}{}$. However, it does not
determine straightforwardly each of the two automorphisms $\lgyr{a,
b}{}$ and $\rgyr{a, b}{}$. Thus, the presence of two families of
gyrations in a bi-gyrogroup, as opposed to the presence of a single
family of gyrations in a gyrogroup, significantly enriches the
bi-gyrostructure of bi-gyrogroups.

\vskip0.5cm
\par\noindent\textbf{Acknowledgments.}
As a visiting researcher, the first author would like to express his
special gratitude to the Department of Mathematics, North Dakota
State University, and his host. This work was completed with the
support of Development and Promotion of Science and Technology
Talents Project (DPST), Institute for Promotion of Teaching Science
and Technology (IPST), Thailand.

\bibliographystyle{amsplain}\addcontentsline{toc}{section}{References}
\bibliography{Bi_Gyrogroups}

\providecommand{\bysame}{\leavevmode\hbox to3em{\hrulefill}\thinspace}
\providecommand{\MR}{\relax\ifhmode\unskip\space\fi MR }
\providecommand{\MRhref}[2]{%
  \href{http://www.ams.org/mathscinet-getitem?mr=#1}{#2}
}
\providecommand{\href}[2]{#2}
\begin{thebibliography}{10}

\bibitem{MA1998NSF}
M.~Aschbacher, \emph{Near subgroups of finite groups}, J. Group Theory
  \textbf{1} (1998), 113--129.

\bibitem{MA2005OBL}
\bysame, \emph{On {B}ol loops of exponent 2}, J. Algebra \textbf{288} (2005),
  99--136.

\bibitem{MAMKJP2005FBL}
M.~Aschbacher, M.~K. Kinyon, and J.D. Phillips, \emph{Finite {B}ruck loops},
  Trans. Amer. Math. Soc. \textbf{358} (2005), no.~7, 3061--3075.

\bibitem{FC2012QCA}
F.~Chatelin, \emph{Qualitative computing: A computational journey into
  non-linearity}, World Scientific Publishing, Hackensack, NJ, 2012.

\bibitem{DDRF2004AA}
D.~S. Dummit and R.~M. Foote, \emph{Abstract algebra}, 3 ed., John Wiley \&
  Sons, Hoboken, NJ, 2004.

\bibitem{TF2003SNS}
T.~Feder, \emph{Strong near subgroups and left gyrogroups}, J. Algebra
  \textbf{259} (2003), 177--190.

\bibitem{MF2009FMG}
M.~Ferreira, \emph{Factorizations of {M}\"{o}bius gyrogroups}, Adv. Appl.
  {C}lifford \mbox{Algebras} \textbf{19} (2009), 303--323.

\bibitem{MF2011GPH}
\bysame, \emph{Hypercomplex analysis and applications, \normalfont{I.~Sabadini
  and F.~\mbox{Sommen} (eds.)}}, Trends in \mbox{Mathematics}, ch.~Gyrogroups
  in Projective Hyperbolic \mbox{{C}lifford} Analysis, pp.~61--80,
  Birkh\"{a}user, Basel, 2011.

\bibitem{MF2014HAE}
\bysame, \emph{Harmonic analysis on the {E}instein gyrogroup}, J. Geom.
  Symmetry Phys. \textbf{35} (2014), 21--60.

\bibitem{MF2015HAM}
\bysame, \emph{Harmonic analysis on the {M}\"{o}bius gyrogroup}, J. Fourier
  Anal. Appl. \textbf{21} (2015), no.~2, 281--317.

\bibitem{MFGR2011MGC}
M.~Ferreira and G.~Ren, \emph{{M}\"{o}bius gyrogroups: A {C}lifford algebra
  approach}, J. Algebra \textbf{328} (2011), 230--253.

\bibitem{FMSF2013CBH}
M.~Ferreira and F.~Sommen, \emph{Complex boosts: A {H}ermitian {C}lifford
  algebra approach}, Adv. Appl. Clifford Algebras \textbf{23} (2013), no.~2,
  339--362.

\bibitem{TFMKJP2006OTS}
T.~Foguel, M.~K. Kinyon, and J.D. Phillips, \emph{On twisted subgroups and
  {B}ol loops of odd order}, Rocky Mountain J. Math. \textbf{36} (2006),
  183--212.

\bibitem{TFAU2000IDG}
T.~Foguel and A.~A. Ungar, \emph{Involutory decomposition of groups into
  twisted subgroups and subgroups}, J. Group Theory \textbf{3} (2000), 27--46.

\bibitem{GGMM1991CAD}
J.~E. Gilbert and M.~A.M. Murray, \emph{{C}lifford algebras and {D}irac
  operators in harmonic analysis}, Cambridge University Press, Cambridge, 1991.

\bibitem{LG2001CGG}
L.~C. Grove, \emph{Classical groups and geometric algebra}, Graduate Studies in
  Mathematics, vol.~39, AMS, Providence, RI, 2001.

\bibitem{TJASRA2014SBM}
T.~G. Jaiy{\'e}{\d{o}}l{\'a}, A.~R.~T. S{\`o}l{\'a}r{\`{\i}}n, and J.~O.
  Ad{\'e}n{\'{\i}}ran, \emph{Some {B}ol-{M}oufang characterization of the
  {T}homas precession of a gyrogroup}, Algebras Groups Geom. \textbf{31}
  (2014), no.~3, 341--362.

\bibitem{RLAY2013TRG}
R.~Lal and A.~Yadav, \emph{Topological right gyrogroups and gyrotransversals},
  Comm. Algebra \textbf{41} (2013), 3559--3575.

\bibitem{BLMM1989SG}
H.~B. Lawson and M.-L. Michelsohn, \emph{Spin geometry}, Princeton University
  Press, Princeton, NJ, 1989.

\bibitem{JL2010CAM}
J.~Lawson, \emph{{C}lifford algebras, {M}\"{o}bius transformations, {V}ahlen
  matrices, and {B}-loops}, Comment. Math. Univ. Carolin. \textbf{51} (2010),
  no.~2, 319--331.

\bibitem{PL2001CAS}
P.~Lounesto, \emph{{C}lifford algebras and spinors}, 2 ed., London Mathematical
  \mbox{Society} Lecture Note Series. 286, Cambridge University Press,
  Cambridge, 2001.

\bibitem{NSAU2013TER}
N.~S\"{o}nmez and A.~A. Ungar, \emph{The {E}instein relativistic velocity model
  of hyperbolic geometry and its plane separation axiom}, Adv. Appl. {C}lifford
  Algebras \textbf{23} (2013), 209--236.

\bibitem{TSKW2014LTG}
T.~Suksumran and K.~Wiboonton, \emph{{L}agrange's theorem for gyrogroups and
  the {C}auchy property}, Quasigroups Related Systems \textbf{22} (2014),
  no.~2, 283--294.

\bibitem{TSKW2015EGB}
\bysame, \emph{{E}instein gyrogroup as a {B}-loop}, Rep. Math. Phys.
  \textbf{76} (2015), 63--74.

\bibitem{TSKW2015ITG}
\bysame, \emph{Isomorphism theorems for gyrogroups and {L}-subgyrogroups}, J.
  Geom. Symmetry Phys. \textbf{37} (2015), 67--83.

\bibitem{AU1988TRP}
A.~A. Ungar, \emph{{T}homas rotation and parametrization of the {L}orentz
  \mbox{transformation} group}, Found. Phys. Lett. \textbf{1} (1988), 57--89.

\bibitem{AU1991TPI}
\bysame, \emph{{T}homas precession and its associated grouplike structure},
  Amer. J. Phys. \textbf{59} (1991), no.~9, 824--834.

\bibitem{AU2001BEA}
\bysame, \emph{Beyond the {E}instein addition law and its gyroscopic {T}homas
  \mbox{Precession}: The theory of gyrogroups and gyrovector spaces},
  Fundamental Theories of Physics, vol. 117, Kluwer Academic, Dordrecht, 2001.

\bibitem{AU2008AHG}
\bysame, \emph{Analytic hyperbolic geometry and {A}lbert {E}instein's {S}pecial
  {T}heory of {R}elativity}, World Scientific, Hackensack, NJ, 2008.

\bibitem{AU2008FMG}
\bysame, \emph{From {M}\"{o}bius to gyrogroups}, Amer. Math. Monthly
  \textbf{115} (2008), no.~2, 138--144.

\bibitem{AU2009AGS}
\bysame, \emph{A gyrovector space approach to hyperbolic geometry}, Synthesis
  \mbox{Lectures} on Mathematics and Statistics \#4, Morgan {\&} Claypool, San
  Rafael, CA, 2009.

\bibitem{AU2010BCE}
\bysame, \emph{Barycentric calculus in {E}uclidean and hyperbolic geometry: A
  \mbox{Comparative} introduction}, World Scientific, Hackensack, NJ, 2010.

\bibitem{AU2015AHG}
\bysame, \emph{Analytic hyperbolic geometry in $n$ dimensions: An
  introduction}, CRC Press, Boca Raton, FL, 2015.

\bibitem{AU2015PRL}
\bysame, \emph{Parametric realization of the {L}orentz transformation group in
  pseudo-{E}uclidean spaces}, J. Geom. Symmetry Phys. \textbf{38} (2015),
  39--108.

\end{thebibliography}
\end{document}